\documentclass[11pt,english]{extarticle}
\usepackage[latin9]{inputenc}
\usepackage[letterpaper]{geometry}
\geometry{verbose,tmargin=2.5cm,bmargin=3.5cm,lmargin=2.5cm,rmargin=2.5cm}
\usepackage{mathrsfs}
\usepackage{amsmath}
\usepackage{amsthm}
\usepackage{amssymb}
\usepackage{graphicx}
\usepackage{esint}

\makeatletter
  \theoremstyle{remark}
  \newtheorem{rem}{\protect\remarkname}
\theoremstyle{plain}
\newtheorem{thm}{\protect\theoremname}
  \theoremstyle{plain}
  \newtheorem{lem}{\protect\lemmaname}


\interdisplaylinepenalty=2500
\usepackage{dsfont}
\usepackage{relsize}
\usepackage{subdepth}
\allowdisplaybreaks

\usepackage{url}

\DeclareFontFamily{OT1}{pzc}{}
\DeclareFontShape{OT1}{pzc}{m}{it}{<-> s * [1.200] pzcmi7t}{}
\DeclareMathAlphabet{\mathpzc}{OT1}{pzc}{m}{it}

\usepackage[noadjust]{cite}
\usepackage{filecontents}

\usepackage{stfloats}

\usepackage{babel}

\renewcommand\footnotemark{}

\@ifundefined{showcaptionsetup}{}{%
 \PassOptionsToPackage{caption=false}{subfig}}
\usepackage{subfig}
\makeatother

\usepackage{babel}
  \providecommand{\lemmaname}{Lemma}
  \providecommand{\remarkname}{Remark}
\providecommand{\theoremname}{Theorem}

\begin{document}

\title{\textbf{Uniform $\varepsilon$-Stability of Distributed Nonlinear}\\
\textbf{Filtering over DNAs: Gaussian-Finite HMMs}}

\author{Dionysios S. Kalogerias\thanks{The Authors are with the Department of Electrical \& Computer Engineering,
Rutgers, The State University of New Jersey, 94 Brett Rd, Piscataway,
NJ 08854, USA. e-mail: \{d.kalogerias, athinap\}@rutgers.edu.}\thanks{Part of this work \cite{KalPet_Asilomar_DFilt_2015} was presented
at the \textit{49th Asilomar Conference on Signals, Systems \& Computers
(Asilomar 2015)}. This work is supported by the National Science Foundation
(NSF) under Grants CCF-1526908 \& CNS-1239188.} and Athina P. Petropulu}

\date{September 2016}

\maketitle
\textbf{\vspace{-30pt}
}
\begin{abstract}
In this work, we study stability of distributed filtering of Markov
chains with finite state space, partially observed in conditionally
Gaussian noise. We consider a nonlinear filtering scheme over a Distributed
Network of Agents (DNA), which relies on the distributed evaluation
of the likelihood part of the centralized nonlinear filter and is
based on a particular specialization of the Alternating Direction
Method of Multipliers (ADMM) for fast average consensus. Assuming
the same number of con\emph{s}ensus steps between any two consecutive
noisy measurements for each sensor in the network, we fully characterize
a minimal number of such steps, such that the distributed filter remains
uniformly stable with a prescribed accuracy level, $\varepsilon\in\left(0,1\right]$,
within a finite operational horizon, $T$, and across all sensors.
Stability is in the sense of the $\ell_{1}$-norm between the centralized
and distributed versions of the posterior at each sensor, and at each
time within $T$. Roughly speaking, our main result shows that uniform
$\varepsilon$-stability of the distributed filtering process depends
only loglinearly on $T$ and (roughly) the size of the network, and
only logarithmically on $1/\varepsilon$. If this total loglinear
bound is fulfilled, any additional consensus iterations will incur
a fully quantified further exponential decay in the consensus error.
Our bounds are universal, in the sense that they are independent of
the particular structure of the Gaussian Hidden Markov Model (HMM)
under consideration.
\end{abstract}
\textbf{\textit{$\quad$}}\textbf{Keywords.} Nonlinear Filtering,
Markov Chains, ADMM, Average Consensus, Distributed State Estimation,
Hidden Markov Models.

\setlength{\textfloatsep}{5pt} 

\section{Introduction}

Distributed state estimation and tracking of partially observed processes
constitute central problems in modern Distributed Networks of Agents
(DNAs), where, in the absence of a powerful fusion center, a group
of power constrained sensing devices with limited computation and/or
communication capabilities receive noisy measurements of some common,
possibly rapidly varying process of global network interest. As also
surveyed in \cite{DPF_SPM_2013}, important applications of DNAs,
where potentially the aforementioned problems arise, include environmental
and agricultural monitoring \cite{APP1_2010}, health care monitoring
\cite{APP2_2010}, pollution source localization \cite{APP4_2007},
surveillance \cite{APP5_2009}, chemical plume tracking \cite{APP3_2004_zhao2004wireless},
target tracking \cite{APP3_2004_zhao2004wireless} and habitat monitoring
\cite{APP6_2002}, to name a few.

In the context of linear state estimation, in recent years and in
parallel with the rapid development of fast averaging consensus protocols
and algorithms in networked measurement systems \cite{Consensus1_2004_Olfati-Saber,Consensus2_2005_Olfati-Saber,Boyd_2006Gossip,Consensus3_2010,Consensus4_2010_Oreshkin,Consensus_Emiliano_2011},
there has been extensive research on the important basic problem of
distributed Kalman filtering over DNAs, under several different perspectives,
such as utilizing control theoretic consensus algorithms \cite{DKF1_2005_Olfati-Saber,DKF4_Olfati-Saber,DKF2_2007_Olfati-Saber},
the sign-of-innovations approach \cite{DKF5_2006_SOIKF_Riberio},
the Alternating Direction Method of Multipliers (ADMM) \cite{DKF6_2008_Schizas},
which will also be considered in this work and others, based on more
customized consensus strategies \cite{DKF3_2006_alriksson,DKF7_2008}.
For a relatively complete list of references, see the extensive survey
\cite{DKF_2013_Survey}.

Although not as rich, the literature concerning the problem of distributed
nonlinear filtering of general Markov processes is itself quite extensive.
Since, in general, most nonlinear filters do not admit finite dimensional
representations, the focus in this case is the derivation of distributed
schemes for the implementation of well defined, finite dimensional
nonlinear filtering approximations, which would allow for efficient,
real time state estimation. In this direction, successful examples
include distributed particle filters \cite{DPF_SPM_2013}, distributed
extended Kalman filters \cite{DKF4_Olfati-Saber,DKF5_2006_SOIKF_Riberio},
as extensions to the linear case mentioned above and, more recently,
the Bayesian consensus filtering approach proposed in \cite{BCF1_2014}.

In this paper, we focus on distributed state estimation of Markov
chains with \textit{finite state space}, partially observed in \textit{conditionally
Gaussian noise}. Hereafter, we will use the term Gaussian Hidden Markov
Model (HMM). This special case is, however, of broad practical interest.
Further, it is known that the posterior probability measure of the
chain at a certain time, relative to the measurements obtained so
far, admits a recursive representation \cite{Elliott1994Exact,Elliott1994Hidden}.
This posterior can subsequently be used in order to produce any optimal
estimate of interest, such as the MMSE estimator of the chain, the
respective MAP estimator, etc. The noisy observations of the chain
are obtained distributively over a DNA, whose connectivity pattern
follows a connected Random Geometric Graph (RGG) \cite{RGG_Dall2002,RGG_Penrose2003}.

Under this setting, we consider a distributed filtering scheme, which
relies on the distributed evaluation of the likelihood part of the
centralized posterior under consideration and is based on a particular
specialization of the ADMM for fast average consensus \cite{Consensus_Emiliano_2011}.
Our idea is similar to the one conveyed by the respective formulation
for particle filters \cite{DPF_SPM_2013}. In order to account for
rapidly varying Markov chains with arbitrary statistical structure,
in our formulation, nonlinear filtering and distributed average consensus
are implemented in \textit{different time scales}, with the message
exchange rate between any sensor and its neighbors being much larger
than the rate of measurement acquisition \cite{DKF4_Olfati-Saber}.
This is a realistic assumption in systems where measurements are acquired
at relatively distant times; for example, every minute, hour or day,
or even more dynamically, possibly at event triggered time instants
or a sequence of stopping times. Additionally, in this paper we assume
perfect communications among sensors. This is a reasonable assumption,
provided, for instance, that the filtering process is implemented
in a higher than the physical layer of the network under consideration.
Our contributions are summarized as follows:

1) First, based on the matrix equivalent formulation of the ADMM for
implementing average consensus and the respective convergence results
presented in \cite{Consensus_Emiliano_2011}, we focus on optimizing
the respective consensus error bound, with respect to two free parameters:
a scalar $\epsilon>0$ and the second largest eigenvalue of a symmetric
(doubly) stochastic matrix ${\bf S}$ (\cite{Consensus_Emiliano_2011},
also see Section \ref{sec:SYSTEM_MODEL}). Under indeed mild conditions,
we show analytically that the error bound is optimized (in a certain
sense) at an explicitly defined $\epsilon$ and by choosing ${\bf S}$
such that its second largest eigenvalue is minimized, the latter being
a convex optimization problem, which can be efficiently solved \cite{Boyd_2006Gossip}.
Our results essentially complement earlier work on ADMM-based consensus,
previously presented in \cite{Consensus_Emiliano_2011}.

2) Then, utilizing the aforementioned optimized bounds and assuming
the same number of consensus iterations between any two consecutive
measurements for each sensor in the network, we fully characterize
a minimal number of such iterations required, such that the distributed
filter remains \textit{uniformly} stable with a prescribed accuracy
level, $\varepsilon\in\left(0,1\right]$, within a finite operational
horizon, $T$. Uniform stability is in the sense of the supremum of
the $\ell_{1}$-norm between the centralized and distributed versions
of the posterior in each sensor, over all sensors and over all times
within the operational horizon of interest. Roughly speaking, our
result shows that, under very reasonable assumptions, the stability
of the distributed filtering process depends only \textit{loglinearly}
on $T$ and the total number of measurements in the network, $N$,
and only \textit{logarithmically} on $1/\varepsilon$. Additionally,
if this total loglinear bound is fulfilled, any additional consensus
iterations will incur an \textit{exponential decrease} in the aforementioned
$\ell_{1}$-norm. The result is \textit{fundamental }and\textit{ universal},
since, apart from the assumed conditional Gaussianity of the observations,
it is virtually \textit{independent of the internal structure} of
the particular HMM under consideration. This fact makes it particularly
attractive in highly heterogeneous DNAs.

The problem of distributed inference in HMMs has been considered earlier
in \cite{DHMM1_2010}. However, the approach taken in \cite{DHMM1_2010}
is distinctly different from our proposed approach. In particular,
the formulation in \cite{DHMM1_2010} is based on control theoretic
consensus \cite{Consensus2_2005_Olfati-Saber} and stochastic approximation,
while our distributed filtering formulation is based on optimized
ADMM-based consensus. Also, the method of \cite{DHMM1_2010} requires
certain assumptions on the statistical structure of the hidden Markov
chain (e.g., primitiveness), while our work makes no such assumptions.
Further, although the analysis presented in \cite{DHMM1_2010} does
provide some limited convergence guarantees, it does not provide any
results on the rate of convergence of the obtained distributed estimators.
Here, we provide a fully tractable stability analysis of the distributed
filtering scheme considered, with explicit, optimistic and universal
bounds on the rate of convergence, as well as the degree of consensus
achieved. Another important difference is that, in \cite{DHMM1_2010},
filtering and consensus are implemented simultaneously. Such setting
excludes cases where the underlying Markov chain is very rapidly varying
and cannot fully exploit the benefits of heterogeneity in the information
observed by the sensors of the DNA under consideration. This is due
to the fact that, in \cite{DHMM1_2010}, global consensus cannot,
in general, be guaranteed within a reasonable and quantitatively predictable
error margin. Contrary to \cite{DHMM1_2010}, the distributed filtering
scheme advocated herein efficiently exploits sensor heterogeneity,
since uniform diffusion of local information is achieved across the
network.

The paper is organized as follows. In Section II, we present the system
model in detail, along with some very basic preliminaries on nonlinear
filtering. Section III introduces our distributed filtering formulation
and, subsequently, focuses on the optimization of the consensus error
bounds produced by the ADMM iterations. In Section IV, we study stability
of the distributed filtering scheme considered in the sense mentioned
above, and we present the relevant results, along with complete proofs.
In Section V, we present some numerical simulations, experimentally
validating some of the properties of the proposed approach, as well
as a relevant discussion. Finally, Section VI concludes the paper.

\textit{Notation}: In the following, the state vector will be represented
as $X_{t}$, and all other matrices and vectors will be denoted by
boldface uppercase and boldface lowercase letters, respectively. Real
valued random variables will be denoted by uppercase letters. Calligraphic
letters and formal script letters will denote sets and $\sigma$-algebras,
respectively. The operators $\left(\cdot\right)^{\boldsymbol{T}}$,
$\lambda_{min}\left(\cdot\right)$ and $\lambda_{max}\left(\cdot\right)$
will denote transposition, minimum and maximum eigenvalue, respectively.
The $\ell_{p}$-norm of a vector $\boldsymbol{x}\in\mathbb{R}^{n}$
is $\left\Vert \boldsymbol{x}\right\Vert _{p}\triangleq\left(\sum_{i=1}^{n}\left|x\left(i\right)\right|^{p}\right)^{1/p}$,
for all naturals $p\ge1$. For any Euclidean space, ${\bf I}$ will
denote the respective identity operator of appropriate dimension.
For any set ${\cal A}$, its complement is denoted as ${\cal A}^{c}$.
Additionally, we employ the identifications $\mathbb{N}^{+}\equiv\left\{ 1,2,\ldots\right\} $,
$\mathbb{N}_{n}^{+}\equiv\left\{ 1,2,\ldots,n\right\} $, $\mathbb{N}_{n}\equiv\left\{ 0\right\} \cup\mathbb{N}_{n}^{+}$
and $\mathbb{N}_{n}^{m}\equiv\mathbb{N}_{n}^{+}\setminus\mathbb{N}_{m-1}^{+}$,
for any positive naturals $n>m$.

\section{\label{sec:SYSTEM_MODEL}System Model \& Preliminaries}

In this section, we first present a generic model of the class of
systems under consideration, where multiple, possibly indirectly connected
sensors observe noisy versions of a common, hidden (i.e., unobserved),
stochastically evolving underlying signal of interest. Second, we
briefly discuss the centralized solution to problem of inference of
the underlying signal on the basis of the observations at the sensors.

\subsection{System Model: HMMs over DNAs}

Without loss of generality, we consider a wireless network consisting
of $S$ sensors, located inside the fixed square geometric region
${\cal S}\triangleq\left[0,1\right]^{2}$. The connectivity pattern
of the DNA under consideration is assumed to obey an \textit{undirected}
RGG model \cite{RGG_Dall2002,RGG_Penrose2003}. According to such
a model, the positions of the sensors, ${\bf p}_{i}\in{\cal S},i\in\mathbb{N}_{S}^{+}$,
are chosen uniformly at random in ${\cal S}$, whereas two sensors
$i$ and $j$ are considered connected if and only if $\left\Vert {\bf p}_{i}-{\bf p}_{j}\right\Vert _{2}\le r$,
where $r\in\left(0,\sqrt{2}\right]$ denotes the connectivity threshold
of the network. Throughout the paper, we assume that the underlying
RGG of the network is \textit{simply connected}, which constitutes
an event happening with very high probability for sufficiently large
number of sensors and/or sufficiently large connectivity threshold
\cite{RGG_Penrose2003}. Also, the one hop neighborhood of each sensor
$i$, including itself, is denoted as $\mathsf{N}_{i}$, for all $i\in\mathbb{N}_{S}^{+}$. 

At each discrete time instant $t\in\mathbb{N}$, sensor $i$ measures
the vector process ${\bf y}_{t}^{i}\in\mathbb{R}^{N_{i}\times1}$,
for all $i\in\mathbb{N}_{S}^{+}$, where $\sum_{i\in\mathbb{N}_{S}^{+}}N_{i}\triangleq N$.
It is assumed that the stacked process ${\bf y}_{t}\triangleq\left[\hspace{-2pt}\left({\bf y}_{t}^{1}\right)^{\boldsymbol{T}}\hspace{-2pt}\hspace{-2pt}\,\ldots\,\hspace{-2pt}\left({\bf y}_{t}^{S}\right)^{\boldsymbol{T}}\right]^{\boldsymbol{T}}\hspace{-2pt}\hspace{-2pt}\hspace{-2pt}\in\mathbb{R}^{N\times1}$
constitutes a noisy version (i.e., a functional) of a (for simplicity)
time-homogeneous Markov chain $X_{t}$, called the \textit{state},
with finite state space ${\cal X}$ of cardinality $L\in\mathbb{N}^{+}$.
Conditioned on $X_{t}$, the process ${\bf y}_{t}$ is distributed
according to a jointly Gaussian law as
\begin{equation}
\left.{\bf y}_{t}\right|X_{t}\sim{\cal N}\left(\boldsymbol{\mu}_{t}\left(X_{t}\right),\boldsymbol{\Sigma}_{t}\left(X_{t}\right)\right),\label{eq:Observation_Equation}
\end{equation}
where $\boldsymbol{\mu}_{t}:{\cal X}\mapsto\mathbb{R}^{N\times1}$
and $\boldsymbol{\Sigma}_{t}:{\cal X}\mapsto\mathbb{R}^{N\times N}$
constitute known measurable functionals of $X_{t}$, for all $t\in\mathbb{N}$.
In particular, $\boldsymbol{\Sigma}_{t}\left(\cdot\right)$ is assumed
to be of block diagonal form, constituting of the submatrices $\mathbb{R}^{N_{i}\times N_{i}}\ni\boldsymbol{\Sigma}_{t}^{i}\left(\boldsymbol{x}\right)\succ{\bf 0}$,
for all $i\in\mathbb{N}_{S}^{+}$, also implying that $\boldsymbol{\Sigma}_{t}\left(\boldsymbol{x}\right)\succ{\bf 0}$,
for all $\boldsymbol{x}\in{\cal X}$. As a result, the measurements
at each sensor are statistically independent, given $X_{t}$. Likewise,
we define $\boldsymbol{\mu}_{t}\left(\boldsymbol{x}\right)\hspace{-2pt}\triangleq\hspace{-2pt}\left[\hspace{-2pt}\left(\boldsymbol{\mu}_{t}^{1}\left(\boldsymbol{x}\right)\hspace{-2pt}\right)^{\boldsymbol{T}}\hspace{-2pt}\hspace{-2pt}\,\ldots\,\hspace{-2pt}\left(\boldsymbol{\mu}_{t}^{S}\left(\boldsymbol{x}\right)\hspace{-2pt}\right)^{\boldsymbol{T}}\right]^{\boldsymbol{T}}$,
where $\boldsymbol{\mu}_{t}^{i}\left(\boldsymbol{x}\right)\in\mathbb{R}^{N_{i}\times1}$,
for all $i\in\mathbb{N}_{S}^{+}$ and $\boldsymbol{x}\in{\cal X}$.
Further, the following additional technical assumptions are made.\textbf{\vspace{0.2cm}
}

\noindent \textbf{Assumption:} \label{Elementary_Definition_1-1}\textbf{(Boundedness)}
The quantities $\lambda_{max}\left(\boldsymbol{\Sigma}_{t}\left(\boldsymbol{x}\right)\right)$,
$\left\Vert \boldsymbol{\mu}_{t}\left(\boldsymbol{x}\right)\right\Vert _{2}$
are both uniformly upper bounded in $t\in\mathbb{N}$ and $\boldsymbol{x}\in{\cal X}$,
with finite bounds $\lambda_{sup}$ and $\mu_{sup}$, respectively.
For technical reasons, it is also true that $\lambda_{inf}\triangleq\inf_{t\in\mathbb{N}}\inf_{\boldsymbol{x}\in{\cal X}}\lambda_{min}\left(\boldsymbol{\Sigma}_{t}\left(\boldsymbol{x}\right)\right)\ge e>1$,
a requirement which can always be satisfied by normalization of the
observations.\textbf{\vspace{0.2cm}
}

Regarding the hidden Markov chain under consideration, it is true
that ${\cal X}\triangleq\left\{ \boldsymbol{x}_{1},\ldots,\boldsymbol{x}_{L}\right\} $,
where $\boldsymbol{x}_{i}$ denotes the $i$-th state of the chain,
for $i\in\mathbb{N}_{L}^{+}$. Although it is actually irrelevant,
for simplicity we will assume that $\boldsymbol{x}_{i}\in\mathbb{R}$,
for all $i\in\mathbb{N}_{L}^{+}$. The temporal dynamics of $X_{t}$
are expressed in terms of the initial distribution of the chain, encoded
in the vector $\pi_{-1}\in\mathbb{R}^{L\times1}$, as well as the
\textit{column stochastic} transition matrix $\boldsymbol{P}\in\left[0,1\right]^{L\times L}$,
defined, as usual, as $\boldsymbol{P}\left(i,j\right)\triangleq{\cal P}\left(\left.X_{t}\equiv\boldsymbol{x}_{i}\right|X_{t-1}\equiv\boldsymbol{x}_{j}\right),$
for all $\left(i,j\right)\in\mathbb{N}_{L}^{+}\times\mathbb{N}_{L}^{+}$.

\subsection{Centralized Recursive Estimates}

Let $\left\{ \mathscr{Y}_{t}\right\} _{t\in\mathbb{N}}$ be the complete
filtration generated by the observations ${\bf y}_{t}$. A quantity
of central importance in nonlinear filtering is the posterior probability
measure of the state $X_{t}$ given $\mathscr{Y}_{t}$, which, since
the state space of $X_{t}$ is finite, can be represented by a random
vector $\pi_{t\left|\mathscr{Y}_{t}\right.}\in\mathbb{R}^{L\times1}$.
Under the system model defined above, this random vector can be exactly
evaluated in real time as \cite{Elliott1994Hidden,Elliott1994Exact}
\begin{equation}
\pi_{t\left|\mathscr{Y}_{t}\right.}\equiv\dfrac{\boldsymbol{E}_{t}}{\left\Vert \boldsymbol{E}_{t}\right\Vert _{1}},\quad\forall t\in\mathbb{N},\label{eq:first}
\end{equation}
where the process $\boldsymbol{E}_{t}\in\mathbb{R}^{L\times1}$ satisfies
the linear recursion $\boldsymbol{E}_{t}\equiv\boldsymbol{\Lambda}_{t}\boldsymbol{P}\boldsymbol{E}_{t-1},$
for all $t\in\mathbb{N}$, with $\boldsymbol{E}_{-1}\triangleq\pi_{-1}$,
\begin{align}
\boldsymbol{\Lambda}_{t} & \hspace{-2pt}\triangleq\hspace{-2pt}\mathrm{diag}\left(\mathsf{L}_{t}\left(\boldsymbol{x}_{1}\right)\,\ldots\,\mathsf{L}_{t}\left(\boldsymbol{x}_{L}\right)\right)\in\mathbb{R}^{L\times L},\label{eq:LIKELIHOODS_1}\\
\mathsf{L}_{t}\left(\boldsymbol{x}\right) & \hspace{-2pt}\triangleq\hspace{-2pt}\dfrac{\exp\hspace{-2pt}\left(\hspace{-2pt}-\dfrac{1}{2}{\bf \overline{y}}_{t}^{\boldsymbol{T}}\left(\boldsymbol{x}\right)\boldsymbol{\Sigma}_{t}^{-1}\left(\boldsymbol{x}\right){\bf \overline{y}}_{t}\left(\boldsymbol{x}\right)\hspace{-2pt}\right)}{\sqrt{\det\left(\boldsymbol{\Sigma}_{t}\left(X_{t}\right)\right)}}\in\mathbb{R},\text{ and}\label{eq:LIKELIHOODS_2}\\
{\bf \overline{y}}_{t}\left(\boldsymbol{x}\right) & \hspace{-2pt}\triangleq\hspace{-2pt}{\bf y}_{t}-\boldsymbol{\mu}_{t}\left(\boldsymbol{x}\right)\in\mathbb{R}^{N\times1},\;\forall\boldsymbol{x}\in{\cal X}\text{ and }\forall t\in\mathbb{N}.
\end{align}
Of course, under these circumstances and defining ${\bf X}\triangleq\left[\boldsymbol{x}_{1}\,\ldots\,\boldsymbol{x}_{L}\right]\in\mathbb{R}^{1\times L}$,
the MMSE estimator of $X_{t}$ given $\mathscr{Y}_{t}$ can be easily
obtained as $\widehat{X}_{t}\triangleq\mathbb{E}\left\{ X_{t}\left|\mathscr{Y}_{t}\right.\right\} \equiv{\bf X}\pi_{t\left|\mathscr{Y}_{t}\right.},$
for all $t\in\mathbb{N}$. Similar estimates may be obtained for other
quantities of possible interest, such as the posterior error covariance
matrix between $X_{t}$ and $\widehat{X}_{t}$, etc. In the following,
we focus on the distributed estimation of the random probability measure
encoded in $\pi_{t\left|\mathscr{Y}_{t}\right.}$ (and not specific
functionals of it).
\begin{rem}
The structure of the nonlinear filtering procedure outlined above
may be interpreted as follows. Quantity \eqref{eq:LIKELIHOODS_2}
constitutes the \textit{conditional likelihood} of the observations,
\textit{given} that the state (the hidden process) is fixed. As a
result, the filtering recursion for $\boldsymbol{E}_{t}$ may be decomposed
into two intuitively informative parts, namely, a \textit{one-step
ahead prediction} step $\left(\boldsymbol{P}\boldsymbol{E}_{t-1}\right)$,
and a \textit{correction/update} step $\boldsymbol{\Lambda}_{t}\left(\boldsymbol{P}\boldsymbol{E}_{t-1}\right)$.
Then, (re)normalization by $\left\Vert \boldsymbol{E}_{t}\right\Vert _{1}$
converts the produced estimate into a valid probability measure.\hfill{}\ensuremath{\blacksquare}
\end{rem}

\section{\label{sec:Preliminaries}Distributed Estimation via the ADMM}

Employing somewhat standard notation, used, for instance, in \cite{Consensus_Emiliano_2011},
exploitting the block-diagonal structure of $\boldsymbol{\Sigma}_{t}$
and, therefore, of $\boldsymbol{\Sigma}_{t}^{-1}$ as well, and with
${\bf \overline{y}}_{t}^{i}\left(\boldsymbol{x}\right)\triangleq{\bf y}_{t}^{i}-\boldsymbol{\mu}_{t}^{i}\left(\boldsymbol{x}\right)$,
we may reexpress \eqref{eq:LIKELIHOODS_2} as
\begin{flalign}
\mathsf{L}_{t}\left(\boldsymbol{x}\right) & \equiv\exp\hspace{-2pt}\left(\hspace{-2pt}-\dfrac{1}{2}\hspace{-2pt}\left({\bf \overline{y}}_{t}^{\boldsymbol{T}}\left(\boldsymbol{x}\right)\hspace{-2pt}\boldsymbol{\Sigma}_{t}^{-1}\left(\boldsymbol{x}\right){\bf \overline{y}}_{t}\left(\boldsymbol{x}\right)\hspace{-2pt}+\hspace{-2pt}\log\det\hspace{-2pt}\left(\boldsymbol{\Sigma}_{t}\hspace{-2pt}\left(\boldsymbol{x}\right)\right)\right)\hspace{-2pt}\right)\nonumber \\
 & \equiv\exp\hspace{-2pt}\left(\hspace{-2pt}-\dfrac{1}{2}\sum_{i\in\mathbb{N}_{S}^{+}}\left[\left({\bf \overline{y}}_{t}^{i}\left(\boldsymbol{x}\right)\right)^{\boldsymbol{T}}\left(\boldsymbol{\Sigma}_{t}^{i}\left(\boldsymbol{x}\right)\right)^{-1}{\bf \overline{y}}_{t}^{i}\left(\boldsymbol{x}\right)+\log\det\hspace{-2pt}\left(\boldsymbol{\Sigma}_{t}^{i}\hspace{-2pt}\left(\boldsymbol{x}\right)\right)\right]\hspace{-2pt}\right)\hspace{-2pt},
\end{flalign}
for all $\boldsymbol{x}\in{\cal X}$ and $t\in\mathbb{N}$. Let
\begin{align}
\theta_{t}^{i}\left(\boldsymbol{x}\right) & \triangleq S\left({\bf \overline{y}}_{t}^{i}\left(\boldsymbol{x}\right)\right)^{\boldsymbol{T}}\left(\boldsymbol{\Sigma}_{t}^{i}\left(\boldsymbol{x}\right)\right)^{-1}{\bf \overline{y}}_{t}^{i}\left(\boldsymbol{x}\right)+S\log\det\left(\boldsymbol{\Sigma}_{t}^{i}\left(\boldsymbol{x}\right)\right),\;\forall i\in\mathbb{N}_{S}^{+},\;\forall\boldsymbol{x}\in{\cal X}\text{ and}
\end{align}
$\boldsymbol{\theta}_{t}\hspace{-2pt}\left(\boldsymbol{x}\right)\hspace{-2pt}\triangleq\hspace{-2pt}\hspace{-2pt}\left[\theta_{t}^{1}\hspace{-2pt}\left(\boldsymbol{x}\right)\hspace{-2pt}\,\ldots\hspace{-2pt}\,\theta_{t}^{S}\hspace{-2pt}\left(\boldsymbol{x}\right)\right]^{\boldsymbol{T}}\hspace{-2pt}\hspace{-2pt}\hspace{-2pt}\in\hspace{-2pt}\mathbb{R}^{S\times1}\hspace{-2pt},\forall t\hspace{-2pt}\in\hspace{-2pt}\mathbb{N}$.
Then, it is true that
\begin{align}
\theta_{t}\left(\boldsymbol{x}\right) & \triangleq\sum_{i\in\mathbb{N}_{S}^{+}}\dfrac{\theta_{t}^{i}\left(\boldsymbol{x}\right)}{S}\equiv{\bf \overline{y}}_{t}^{\boldsymbol{T}}\left(\boldsymbol{x}\right)\hspace{-2pt}\boldsymbol{\Sigma}_{t}^{-1}\hspace{-2pt}\left(\boldsymbol{x}\right){\bf \overline{y}}_{t}\hspace{-2pt}\left(\boldsymbol{x}\right)+\log\det\hspace{-2pt}\left(\boldsymbol{\Sigma}_{t}\hspace{-2pt}\left(\boldsymbol{x}\right)\right)\hspace{-2pt}.\label{eq:CONSENSUS_1}
\end{align}
Of course, sensor $i$ of the network observes $\theta_{t}^{i}$,
for all $t\in\mathbb{N}$. It is then clear that knowledge of $\theta_{t}$
at each sensor is equivalent to the ability of locally evaluating
the posterior $\pi_{t\left|\mathscr{Y}_{t}\right.}$, since $\exp\left(-\theta_{t}\left(\boldsymbol{x}\right)/2\right)\equiv\mathsf{L}_{t}\left(\boldsymbol{x}\right)$,
which is the likelihood part of the filter at time $t$. There are
exactly $L$ possibilities for $X_{t}$ and, as a result, in order
to make the computation of the likelihood matrix $\boldsymbol{\Lambda}_{t}$
locally feasible, each sensor should be able to acquire $\mathsf{L}_{t}\left(\boldsymbol{x}_{j}\right)$,
for all $j\in\mathbb{N}_{L}^{+}$. But since there are no functional
dependencies among the latter quantities, each can be separately computed,
in a completely parallel fashion. Consequently, it suffices to focus
exclusively on $\mathsf{L}_{t}\left(\boldsymbol{x}_{j}\right)$, for
some fixed $j\in\mathbb{N}_{L}^{+}$.

Freeze time $t$ temporarily and let $\epsilon>0$. Also, consider
a \textit{symmetric (doubly) stochastic} matrix ${\bf S}\in\mathbb{R}^{S\times S}$,
with ${\bf S}\left(k,l\right)$ being non zero if and only if $l\in\mathsf{N}_{k}$,
for all $k\in\mathbb{N}_{S}^{+}$; precise selection of ${\bf S}$
will be considered later (see Theorem \ref{Optimal_Rates} and relevant
discussion). Then, employing the ADMM based, distributed averaging
\textit{Method B} as discussed in \cite{Consensus_Emiliano_2011},
with $\epsilon{\bf S}$ as the matrix of augmentation constants \cite{Consensus_Emiliano_2011}
and exploitting its symmetricity, it can be easily shown that the
ADMM iterations \cite{Consensus_Emiliano_2011} for the distributed
computation of the average \eqref{eq:CONSENSUS_1}, at sensor $k\in\mathbb{N}_{S}^{+}$,
are reduced to
\begin{flalign}
\boldsymbol{\vartheta}_{t}^{j}\left(n,k\right) & \equiv\dfrac{1}{1+\epsilon}\theta_{t}^{k}\left(\boldsymbol{x}_{j}\right)+\ell_{t}\left(n-1,k\right)+\dfrac{\epsilon}{\left(1+\epsilon\right)}\sum_{l\in\mathsf{N}_{k}}{\bf S}\left(k,l\right)\boldsymbol{\vartheta}_{t}^{j}\left(n-1,l\right)\quad\text{and}\\
\ell_{t}\left(n,k\right) & \equiv\ell_{t}\left(n-1,k\right)+\dfrac{\epsilon}{2\left(1+\epsilon\right)}\hspace{-2pt}\left(\sum_{l\in\mathsf{N}_{k}}\hspace{-2pt}{\bf S}\left(k,l\right)\boldsymbol{\vartheta}_{t}^{j}\left(n\hspace{-2pt}-\hspace{-2pt}1,l\right)-\boldsymbol{\vartheta}_{t}^{j}\left(n\hspace{-2pt}-\hspace{-2pt}1,k\right)\hspace{-2pt}\right)\hspace{-2pt},
\end{flalign}
for all $n\in\mathbb{N}^{2}$. The scheme is initialized as
\begin{flalign}
\boldsymbol{\vartheta}_{t}^{j}\left(1,k\right) & \triangleq\dfrac{1}{1+\epsilon}\theta_{t}^{k}\left(\boldsymbol{x}_{j}\right)\quad\text{and}\\
\ell_{t}\left(1,k\right) & \triangleq0,\quad\forall k\in\mathbb{N}_{S}^{+}.
\end{flalign}
Now, letting $t$ vary in $\mathbb{N}$, at each iteration $n_{t}$
and at each sensor $k$, the true posterior measure $\pi_{t\left|\mathscr{Y}_{t}\right.}$
is approximated as
\begin{equation}
\widetilde{\pi}_{t\left|\mathscr{Y}_{t}\right.}^{k}\left(n_{t}\right)\triangleq\dfrac{\widetilde{\boldsymbol{E}}_{t}^{k}\left(n_{t}\right)}{\left\Vert \widetilde{\boldsymbol{E}}_{t}^{k}\left(n_{t}\right)\right\Vert _{1}},\quad\forall t\in\mathbb{N},
\end{equation}
where the process $\widetilde{\boldsymbol{E}}_{t}^{k}\left(n_{t}\right)\in\mathbb{R}^{L\times1}$
satisfies the linear recursion $\widetilde{\boldsymbol{E}}_{t}^{k}\left(n_{t}\right)\equiv\widetilde{\boldsymbol{\Lambda}}_{t}^{k}\left(n_{t}\right)\boldsymbol{P}\widetilde{\boldsymbol{E}}_{t-1}^{k}\left(n_{t-1}\right),$
for all $t\in\mathbb{N}$, with $\widetilde{\boldsymbol{E}}_{-1}^{k}\left(n_{t}\right)\equiv\widetilde{\boldsymbol{E}}_{-1}^{k}\triangleq\pi_{-1}\equiv\boldsymbol{E}_{-1}$
and where $\widetilde{\boldsymbol{\Lambda}}_{t}^{k}\left(n_{t}\right)$
is defined as in \eqref{eq:LIKELIHOODS_1}, but with the likelihoods
$\mathsf{L}_{t}\left(\boldsymbol{x}_{j}\right)\hspace{-2pt},j\hspace{-2pt}\in\hspace{-2pt}\mathbb{N}_{L}^{+}$
being replaced by
\begin{equation}
\widetilde{\mathsf{L}}_{t}^{n_{t},k}\left(\boldsymbol{x}_{j}\right)\triangleq\exp\left(-\dfrac{\boldsymbol{\vartheta}_{t}^{j}\left(n_{t},k\right)}{2}\right),\quad\forall j\in\mathbb{N}_{L}^{+},
\end{equation}
completing the algorithmic description of the distributed HMM under
consideration. In the above, $n_{t}$ denotes the \textit{iteration
index at time} $t$. In this paper, mainly for analytical and intuitional
simplicity, we will assume that $n_{t}\equiv n,$ for all $t\in\mathbb{N}$.
This simply means that each estimate $\widetilde{\boldsymbol{E}}_{t}^{k}\left(n\right)$
is recursively constructed using the \textit{``same-iteration''}
estimate $\widetilde{\boldsymbol{E}}_{t-1}^{k}\left(n\right)$, \textit{at
the previous time step}. See Fig. \ref{fig:Diagram} for a schematic
representation of the procedure explained above, for each sensor in
the DNA under consideration.
\begin{figure}
\centering\includegraphics[clip]{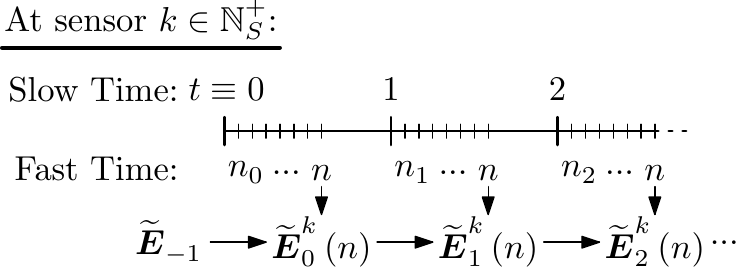}\caption{\label{fig:Diagram}An intuitive schematic representation of the distributed
filtering procedure at sensor $k\in\mathbb{N}_{S}^{+}$.}
\end{figure}

\begin{rem}
For practical considerations, at this point, it would be worth specifying
both the computational complexity and the information exchange complexity
of the distributed filtering considered, \textit{per sensor $k\in\mathbb{N}_{S}^{+}$,
per slow time} $t\in\mathbb{N}$. With regards to computational complexity,
it is easy to see that, at sensor $k$, each consensus iteration incurs
an order of ${\cal O}\left(L\left|\mathsf{N}_{k}\right|\right)$ operations
of combined multiply-adds. Therefore, multiplying by the total number
of consensus iterations per slow time, $n$ (the fast time), and adding
the computational complexity incurred by filtering, results in total
computational complexity at sensor $k$, for each slow time, of the
order of ${\cal O}\left(nL\left|\mathsf{N}_{k}\right|+L^{2}\right)$.
One can see that, for a hidden chain with state space of large cardinality,
$L$, if $n\left|\mathsf{N}_{k}\right|\approx L$, then the computational
complexity of the distributed filtering scheme under consideration
is of the same order as that of filtering alone. As far as information
exchange complexity is concerned, following a similar procedure, it
can be readily shown that an order of ${\cal O}\left(n\left|\mathsf{N}_{k}\right|\right)$
bidirectional (at worst) message exchanges have to take place at sensor
$k$, per slow time $t$.\hfill{}\ensuremath{\blacksquare}
\end{rem}
Arguing as in \cite{Consensus_Emiliano_2011}, it is then straightforward
to show that of central importance concerning the convergence of the
ADMM scheme is the eigenstructure and, in particular, the Second Largest
Eigenvalue Modulus (SLEM) \cite{Boyd2004Fastest} of the matrix \renewcommand{\arraystretch}{1.5}
\begin{equation}
\boldsymbol{M}\triangleq\begin{bmatrix}\dfrac{\epsilon}{1+\epsilon}{\bf S}+{\bf I} & -\dfrac{\epsilon}{2\left(1+\epsilon\right)}\left({\bf S}+{\bf I}\right)\\
{\bf I} & {\bf 0}
\end{bmatrix}\in\mathbb{R}^{2S\times2S}.
\end{equation}
\renewcommand{\arraystretch}{1}Hereafter, let the sets $\left\{ \lambda_{{\bf S}}^{i}\right\} _{i\in\mathbb{N}_{S}^{+}}$
and $\left\{ \lambda_{\boldsymbol{M}}^{i}\right\} _{i\in\mathbb{N}_{2S}^{+}}$
contain the eigenvalues of ${\bf S}$ and $\boldsymbol{M}$, respectively.
Then, recalling that, due to symmetricity, the spectrum of ${\bf S}$
is real, set $\lambda_{{\bf S}}^{1}\triangleq1$ and $\lambda_{{\bf S}}^{2}\triangleq\max_{i\in\left\{ 2,\ldots,S\right\} }\lambda_{{\bf S}}^{i}$.
Because the zero pattern of ${\bf S}$ is the same as that of the
adjacency matrix of the connected RGG modeling the connectivity of
the DNA under consideration (with ones in its diagonal entries), it
can be easily shown that ${\bf S}$ corresponds to the transition
matrix of a Markov chain, which is both irreducible and aperiodic.
Therefore, the SLEM of ${\bf S}$ is strictly smaller than one, and,
consequently, $\lambda_{{\bf S}}^{2}<1$ as well \cite{Boyd2004Fastest}.
Likewise, since $\boldsymbol{M}$ has a unique unit eigenvalue \cite{Consensus_Emiliano_2011},
let $\lambda_{\boldsymbol{M}}^{1}\equiv\left|\lambda_{\boldsymbol{M}}^{1}\right|\triangleq1$,
and also let $0<\rho<1$ (also see \cite{Consensus_Emiliano_2011})
denote the SLEM of $\boldsymbol{M}$, that is, $\rho\triangleq\max_{i\in\left\{ 2,\ldots,2S\right\} }\left|\lambda_{\boldsymbol{M}}^{i}\right|$.
For later reference, define the auxiliary vector $\boldsymbol{\vartheta}_{t}^{j}\left(n\right)\hspace{-2pt}\triangleq\hspace{-2pt}\left[\boldsymbol{\vartheta}_{t}^{j}\left(n,1\right)\,\ldots\,\boldsymbol{\vartheta}_{t}^{j}\left(n,S\right)\right]^{\boldsymbol{T}}\hspace{-2pt}\hspace{-2pt}\in\hspace{-2pt}\mathbb{R}^{S\times1}$.When
applied to the average consensus problem, and specifically to the
distributed filtering problem under consideration, the rate of convergence
of the ADMM is characterized as follows.
\begin{thm}
\textbf{\textup{(ADMM Consensus Error Bound \cite{Consensus_Emiliano_2011})
}}\label{Lemma_ADMM_1} Fix a natural $T>0$, denoting a finite operational
horizon. For each $t\in\mathbb{N}_{T}^{+}$ and an arbitrary $j\in\mathbb{N}_{L}^{+}$,
it is true that, for all $n\in\mathbb{N}^{2}\cap\mathbb{N}^{\left\lfloor \epsilon+1\right\rfloor }$,
\begin{equation}
\left\Vert \boldsymbol{\vartheta}_{t}^{j}\left(n\right)-\boldsymbol{\vartheta}_{t}^{j}\left(\infty\right)\right\Vert _{2}\le\left(1+\epsilon\right)^{-1}\left\Vert \boldsymbol{\theta}_{t}\left(\boldsymbol{x}_{j}\right)\right\Vert _{2}n\rho^{n-1},\label{eq:Bound_1}
\end{equation}
where $\boldsymbol{\vartheta}_{t}^{j}\left(\infty\right)\triangleq\theta_{t}\left(\boldsymbol{x}_{j}\right){\bf 1}_{S}$.
\end{thm}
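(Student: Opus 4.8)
The plan is to recognize that the ADMM iterations defining $\boldsymbol{\vartheta}_{t}^{j}(n,k)$ and $\ell_{t}(n,k)$ form a linear time-invariant recursion in the stacked state vector, and then to track the convergence of this recursion to its fixed point via the spectral properties of $\boldsymbol{M}$. First I would collect the two scalar recursions across all sensors $k\in\mathbb{N}_{S}^{+}$ into the vector form $\boldsymbol{z}_{t}(n)\triangleq[\boldsymbol{\vartheta}_{t}^{j}(n)^{\boldsymbol{T}}\;\;\boldsymbol{\ell}_{t}(n)^{\boldsymbol{T}}]^{\boldsymbol{T}}\in\mathbb{R}^{2S\times1}$, where $\boldsymbol{\ell}_{t}(n)$ stacks the $\ell_{t}(n,k)$. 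Exploiting that ${\bf S}(k,l)$ is supported on $\mathsf{N}_{k}$, the update then reads $\boldsymbol{z}_{t}(n)=\boldsymbol{M}\boldsymbol{z}_{t}(n-1)+\boldsymbol{b}_{t}$, with a constant forcing term $\boldsymbol{b}_{t}$ proportional to $(1+\epsilon)^{-1}\boldsymbol{\theta}_{t}(\boldsymbol{x}_{j})$ and with $\boldsymbol{M}$ exactly the matrix given in the excerpt. The claimed limit $\boldsymbol{\vartheta}_{t}^{j}(\infty)=\theta_{t}(\boldsymbol{x}_{j}){\bf 1}_{S}$ should be verified to be (the $\boldsymbol{\vartheta}$-block of) the fixed point $\boldsymbol{z}_{t}(\infty)$ of this affine map, i.e.\ that $\boldsymbol{z}_{t}(\infty)=\boldsymbol{M}\boldsymbol{z}_{t}(\infty)+\boldsymbol{b}_{t}$; this follows because ${\bf 1}_{S}$ is the Perron eigenvector of the doubly stochastic ${\bf S}$ and the averaging property $\sum_{i}\theta_{t}^{i}/S=\theta_{t}$ from \eqref{eq:CONSENSUS_1}.

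Next I would subtract the fixed-point equation from the recursion to obtain a homogeneous error recursion $\boldsymbol{e}_{t}(n)\triangleq\boldsymbol{z}_{t}(n)-\boldsymbol{z}_{t}(\infty)=\boldsymbol{M}\,\boldsymbol{e}_{t}(n-1)=\boldsymbol{M}^{n-1}\boldsymbol{e}_{t}(1)$. Since $\boldsymbol{M}$ has the unique unit eigenvalue $\lambda_{\boldsymbol{M}}^{1}=1$ with eigenvector lying in the consensus/fixed-point direction that the error is orthogonal to by construction, the effective contraction on the error subspace is governed by the SLEM $\rho<1$. The core estimate is therefore a bound on $\|\boldsymbol{M}^{n-1}\|$ restricted to the complement of the unit-eigenvalue subspace. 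The appearance of the factor $n$ in \eqref{eq:Bound_1} signals that $\boldsymbol{M}$ need not be diagonalizable: a nontrivial Jordan block associated with an eigenvalue of modulus $\rho$ produces powers of the form $\binom{n-1}{1}\rho^{n-1}$, which is precisely why the bound carries $n\rho^{n-1}$ rather than $\rho^{n-1}$. I would make this explicit by passing to the Jordan form and bounding the norm of each Jordan block, using the largest block size (at most two, consistent with the structure of $\boldsymbol{M}$) to justify the single linear factor $n$. The restriction $n\in\mathbb{N}^{2}\cap\mathbb{N}^{\lfloor\epsilon+1\rfloor}$ should drop out as the range in which the initialization $\boldsymbol{z}_{t}(1)$ and the crude constant bookkeeping make the stated inequality tight rather than merely asymptotic.

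Finally I would bound the initial error $\|\boldsymbol{e}_{t}(1)\|_{2}$ in terms of the data. From the initialization $\boldsymbol{\vartheta}_{t}^{j}(1,k)=(1+\epsilon)^{-1}\theta_{t}^{k}(\boldsymbol{x}_{j})$ and $\ell_{t}(1,k)=0$, together with the explicit fixed point, $\|\boldsymbol{e}_{t}(1)\|_{2}$ is controlled by a constant multiple of $(1+\epsilon)^{-1}\|\boldsymbol{\theta}_{t}(\boldsymbol{x}_{j})\|_{2}$; combining this with the Jordan-block estimate yields the right-hand side of \eqref{eq:Bound_1}. The \emph{main obstacle} I anticipate is the second step: getting the constant in front exactly equal to $(1+\epsilon)^{-1}$ (rather than some looser constant times it) requires care in coordinating the choice of similarity transform diagonalizing the non-defective part of $\boldsymbol{M}$ with the norm used on the Jordan blocks, and in verifying that the unit-eigenvalue direction is annihilated cleanly by the error. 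Since the statement is attributed to \cite{Consensus_Emiliano_2011}, I expect the cited convergence analysis supplies exactly this spectral bookkeeping, so my proof would reduce to the specialization of that argument to the symmetric-${\bf S}$, augmentation-constant-$\epsilon{\bf S}$ instance, verifying that the reduced $\boldsymbol{M}$ above is the correct iteration matrix and that the fixed point coincides with the consensus value $\theta_{t}(\boldsymbol{x}_{j}){\bf 1}_{S}$.
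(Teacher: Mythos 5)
First, a point of reference: the paper does not actually prove Theorem \ref{Lemma_ADMM_1} --- it is imported wholesale from \cite{Consensus_Emiliano_2011}, the only in-text justification being the remark that, ``arguing as in \cite{Consensus_Emiliano_2011},'' convergence is governed by the SLEM of $\boldsymbol{M}$. There is therefore no in-paper argument to compare yours against line by line, and your closing observation that the proof ``reduces to the specialization of the cited analysis'' is exactly the position the paper itself takes.

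On the merits of your sketch, the architecture (linear recursion, fixed point equal to the consensus value, spectral contraction with a Jordan-type factor producing $n\rho^{n-1}$) is the right one, but the first step contains a concrete error. The matrix $\boldsymbol{M}$ displayed in the paper is \emph{not} the iteration matrix of the stacked state $\left[\left(\boldsymbol{\vartheta}_{t}^{j}\left(n\right)\right)^{\boldsymbol{T}}\;\boldsymbol{\ell}_{t}^{\boldsymbol{T}}\left(n\right)\right]^{\boldsymbol{T}}$: its second block row $\left[{\bf I}\;\;{\bf 0}\right]$ would assert $\boldsymbol{\ell}_{t}\left(n\right)\equiv\boldsymbol{\vartheta}_{t}^{j}\left(n-1\right)$, which is not the $\ell$-update. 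Writing the two updates in vector form yields an iteration matrix with block rows $\left[\tfrac{\epsilon}{1+\epsilon}{\bf S}\;\;{\bf I}\right]$ and $\left[\tfrac{\epsilon}{2\left(1+\epsilon\right)}\left({\bf S}-{\bf I}\right)\;\;{\bf I}\right]$, whereas the paper's $\boldsymbol{M}$ is the companion matrix of the \emph{second-order, homogeneous} recursion in $\boldsymbol{\vartheta}_{t}^{j}$ alone, obtained by eliminating $\boldsymbol{\ell}_{t}$ and acting on $\left[\left(\boldsymbol{\vartheta}_{t}^{j}\left(n\right)\right)^{\boldsymbol{T}}\;\left(\boldsymbol{\vartheta}_{t}^{j}\left(n-1\right)\right)^{\boldsymbol{T}}\right]^{\boldsymbol{T}}$; in those coordinates the forcing term cancels, so there is no affine part to subtract off. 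The two matrices do share the characteristic polynomial \eqref{eq:Q_Equation} of Appendix A, so your spectral argument survives after the bookkeeping is redone, but as written the identification is wrong. Two further loose ends: (i) in the $\left(\boldsymbol{\vartheta},\boldsymbol{\ell}\right)$ coordinates the fixed-point set of the affine map is a one-parameter family (any $c{\bf 1}_{S}$ satisfies $\left({\bf S}-{\bf I}\right)\boldsymbol{\vartheta}^{*}\equiv{\bf 0}$), and pinning down $c\equiv\theta_{t}\left(\boldsymbol{x}_{j}\right)$ requires the conserved quantity ${\bf 1}^{\boldsymbol{T}}\boldsymbol{\ell}_{t}\left(n\right)\equiv0$, which follows from double stochasticity of ${\bf S}$ together with the initialization $\ell_{t}\left(1,k\right)\equiv0$ --- you gesture at this but do not carry it out; and (ii) the exact prefactor $\left(1+\epsilon\right)^{-1}$ and the threshold $n\ge\left\lfloor \epsilon+1\right\rfloor$ are precisely the parts you defer to the citation, so as a self-contained proof the proposal remains incomplete.
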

An apparent conclusion of Theorem \ref{Lemma_ADMM_1} is that the
rate of convergence of the distributed averaging procedure depends
to a large extent on $\epsilon$ and of course the SLEM of $\boldsymbol{M}$,
$\rho$, which, by construction, is also a function of $\epsilon$.
On the other hand, $\rho$ depends on the eigenstructure of ${\bf S}$,
also by construction. As it turns out, the full spectrum of $\boldsymbol{M}$
is explicitly related to the spectrum of ${\bf S}$. The relevant
result follows, complementing the analysis previously presented in
\cite{Consensus_Emiliano_2011}.
\begin{thm}
\textbf{\textup{(Characterization of the Spectrum of $\boldsymbol{M}$)
}}\label{Spectrum_of_M} Given $\lambda_{{\bf S}}^{i},i\in\mathbb{N}_{S}^{+}$
and for any $\epsilon>0$, the eigenvalues of $\boldsymbol{M}$ may
be expressed in pairs as
\begin{equation}
\lambda_{\boldsymbol{M}}^{2i-1,2i}\hspace{-2pt}\left(\lambda_{{\bf S}}^{i},\epsilon\right)\hspace{-2pt}\equiv\hspace{-2pt}\dfrac{1+\epsilon+\epsilon\lambda_{{\bf S}}^{i}\pm\hspace{-2pt}\sqrt{1\hspace{-2pt}+\hspace{-2pt}\epsilon^{2}\left(\left(\lambda_{{\bf S}}^{i}\right)^{2}\hspace{-2pt}\hspace{-2pt}-\hspace{-2pt}1\right)}}{2\left(1+\epsilon\right)},\label{eq:EIGEIG}
\end{equation}
for all $i\in\mathbb{N}_{S}^{+}$. Then, the SLEM of $\boldsymbol{M}$
can be expressed as
\begin{equation}
\rho\left(\epsilon,\lambda_{{\bf S}}^{2}\right)\hspace{-2pt}\equiv\hspace{-2pt}\begin{cases}
\left|\lambda_{\boldsymbol{M}}^{3}\left(\lambda_{{\bf S}}^{2},\epsilon\right)\right|, & \text{if}\text{ }\epsilon\le h\left(\lambda_{{\bf S}}^{2}\right)\\
\dfrac{\epsilon}{1+\epsilon}, & \text{otherwise}
\end{cases}\hspace{-2pt},\;\forall\epsilon>0,\label{eq:SLEM_positive}
\end{equation}
where, for all $\lambda_{{\bf S}}^{2}\in\left[-1,1\right)$,
\begin{equation}
h\left(\lambda_{{\bf S}}^{2}\right)\triangleq\dfrac{1+\lambda_{{\bf S}}^{2}}{1-\lambda_{{\bf S}}^{2}}\mathds{1}_{\left[0,1\right)}\left(\lambda_{{\bf S}}^{2}\right)+\mathds{1}_{\left[-1,0\right)}\left(\lambda_{{\bf S}}^{2}\right).
\end{equation}
Further, for fixed $\lambda_{{\bf S}}^{2}\in\left[-1,1\right)$, $\rho$
is globally minimized at
\begin{equation}
\epsilon^{*}\hspace{-2pt}\left(\lambda_{{\bf S}}^{2}\right)\triangleq\begin{cases}
\dfrac{1}{\sqrt{1-\left(\lambda_{{\bf S}}^{2}\right)^{2}}}, & \text{if}\text{ }\lambda_{{\bf S}}^{2}\in\left[0,1\right)\\
1, & \text{if}\text{ }\lambda_{{\bf S}}^{2}\in\left[-1,0\right)
\end{cases},
\end{equation}
with optimal value
\begin{equation}
\rho^{*}\hspace{-2pt}\left(\lambda_{{\bf S}}^{2}\right)\hspace{-2pt}\equiv\hspace{-2pt}\begin{cases}
\dfrac{\lambda_{{\bf S}}^{2}+1+\sqrt{1\hspace{-2pt}-\hspace{-2pt}\left(\lambda_{{\bf S}}^{2}\right)^{2}}}{2\left(1+\hspace{-2pt}\sqrt{1\hspace{-2pt}-\hspace{-2pt}\left(\lambda_{{\bf S}}^{2}\right)^{2}}\right)}, & \text{if}\text{ }\lambda_{{\bf S}}^{2}\hspace{-2pt}\in\hspace{-2pt}\left[0,1\right)\\
1/2, & \text{if}\text{ }\lambda_{{\bf S}}^{2}\hspace{-2pt}\in\hspace{-2pt}\left[-1,0\right)
\end{cases},\label{eq:Optimal_SLEM}
\end{equation}
constituting a continuous and increasing function of the second largest
eigenvalue of ${\bf S}$, $\lambda_{{\bf S}}^{2}$.
\end{thm}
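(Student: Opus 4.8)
The plan is to exploit the fact that every $S\times S$ block of $\boldsymbol{M}$ is a polynomial in ${\bf S}$, so that $\boldsymbol{M}$ is \emph{simultaneously} block-diagonalizable through the orthonormal eigenbasis of the symmetric matrix ${\bf S}$. Writing ${\bf S}={\bf U}\,\mathrm{diag}\left(\lambda_{{\bf S}}^{1},\ldots,\lambda_{{\bf S}}^{S}\right){\bf U}^{\boldsymbol{T}}$ and conjugating $\boldsymbol{M}$ by $\mathrm{diag}\left({\bf U},{\bf U}\right)$ turns each block into a diagonal matrix; a suitable permutation then decouples $\boldsymbol{M}$ into $S$ independent $2\times2$ blocks, one per eigenvalue $\lambda_{{\bf S}}^{i}$, namely
\[
\boldsymbol{M}_{i}=\begin{bmatrix}\dfrac{\epsilon}{1+\epsilon}\lambda_{{\bf S}}^{i}+1 & -\dfrac{\epsilon}{2\left(1+\epsilon\right)}\left(\lambda_{{\bf S}}^{i}+1\right)\\[4pt]1 & 0\end{bmatrix}.
\]
The eigenvalues of $\boldsymbol{M}_{i}$ solve $\lambda^{2}-\mathrm{tr}\left(\boldsymbol{M}_{i}\right)\lambda+\det\left(\boldsymbol{M}_{i}\right)=0$; a short computation gives $\mathrm{tr}\left(\boldsymbol{M}_{i}\right)=\left(1+\epsilon+\epsilon\lambda_{{\bf S}}^{i}\right)/\left(1+\epsilon\right)$ and $\det\left(\boldsymbol{M}_{i}\right)=\epsilon\left(1+\lambda_{{\bf S}}^{i}\right)/\left(2\left(1+\epsilon\right)\right)$, whereupon the discriminant collapses neatly to $\left(1+\epsilon^{2}\left(\left(\lambda_{{\bf S}}^{i}\right)^{2}-1\right)\right)/\left(1+\epsilon\right)^{2}$, yielding exactly \eqref{eq:EIGEIG}.

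For the SLEM I would first observe that the unit eigenvalue is contributed solely by the block at $\lambda_{{\bf S}}^{1}=1$, whose companion eigenvalue is $\epsilon/\left(1+\epsilon\right)$; all remaining eigenvalues arise from $\lambda_{{\bf S}}^{i}\in\left[-1,1\right)$, $i\ge2$. Let $F\left(a,\epsilon\right)$ denote the spectral radius of the $2\times2$ block with $\lambda_{{\bf S}}^{i}=a$, so that $\rho=\max\left(\epsilon/\left(1+\epsilon\right),\max_{i\ge2}F\left(\lambda_{{\bf S}}^{i},\epsilon\right)\right)$. Since $\det\left(\boldsymbol{M}_{i}\right)>0$ and $\mathrm{tr}\left(\boldsymbol{M}_{i}\right)>0$ for $a>-1$, both eigenvalues are nonnegative in the real regime and there $F$ equals the $+$ root $\left|\lambda_{\boldsymbol{M}}^{3}\right|$, while in the complex regime $F=\sqrt{\det}=\sqrt{\epsilon\left(1+a\right)/\left(2\left(1+\epsilon\right)\right)}$. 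The key reduction is the envelope bound $\max_{a\in\left[-1,\lambda_{{\bf S}}^{2}\right]}F\left(a,\epsilon\right)\le\max\left(F\left(\lambda_{{\bf S}}^{2},\epsilon\right),\epsilon/\left(1+\epsilon\right)\right)$: for $\epsilon\le1$ the block stays real and $F$ is monotonically increasing in $a$, so the maximum sits at $a=\lambda_{{\bf S}}^{2}$; for $\epsilon>1$, $F$ is valley-shaped in $a$ (decreasing on the left real branch, then increasing through the complex and right real branches), so its maximum over $\left[-1,\lambda_{{\bf S}}^{2}\right]$ is attained at an endpoint, and the left endpoint gives $F\left(-1,\epsilon\right)=1/\left(1+\epsilon\right)<\epsilon/\left(1+\epsilon\right)$. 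Hence $\rho=\max\left(\epsilon/\left(1+\epsilon\right),F\left(\lambda_{{\bf S}}^{2},\epsilon\right)\right)$, and the last step is to compare the two terms: squaring $\sqrt{\det}$ against $\epsilon/\left(1+\epsilon\right)$ in the complex regime yields $F\ge\epsilon/\left(1+\epsilon\right)\iff\epsilon\le\left(1+\lambda_{{\bf S}}^{2}\right)/\left(1-\lambda_{{\bf S}}^{2}\right)$, while the parallel comparison of the real-regime root produces the threshold $\epsilon\le1$ when $\lambda_{{\bf S}}^{2}<0$; the two cases assemble precisely into the indicator definition of $h\left(\lambda_{{\bf S}}^{2}\right)$ and into \eqref{eq:SLEM_positive}.

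For the minimization over $\epsilon$ I would use the piecewise structure directly. On $\epsilon>h\left(\lambda_{{\bf S}}^{2}\right)$ the SLEM equals the strictly increasing $\epsilon/\left(1+\epsilon\right)$, so no minimizer lives there; on $\epsilon\le h$ it equals $F\left(\lambda_{{\bf S}}^{2},\epsilon\right)$, which is increasing in $\epsilon$ along the entire complex branch (again because $\epsilon/\left(1+\epsilon\right)$ is increasing). It then remains to differentiate the real-branch root in $\epsilon$ and show it strictly decreasing; after clearing the positive denominator $D=\sqrt{1+\epsilon^{2}\left(\left(\lambda_{{\bf S}}^{2}\right)^{2}-1\right)}$, the sign of the derivative reduces to $\lambda_{{\bf S}}^{2}D<1+\epsilon\left(1-\left(\lambda_{{\bf S}}^{2}\right)^{2}\right)$, which holds since $\lambda_{{\bf S}}^{2}D\le D\le1$. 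Thus the minimizer is exactly the real/complex transition $\epsilon^{*}=1/\sqrt{1-\left(\lambda_{{\bf S}}^{2}\right)^{2}}$ for $\lambda_{{\bf S}}^{2}\in\left[0,1\right)$ (and $\epsilon^{*}=1$ for $\lambda_{{\bf S}}^{2}<0$), at which $D=0$ and the repeated root evaluates to $\rho^{*}$, giving the closed forms in \eqref{eq:Optimal_SLEM}. Finally, writing $\rho^{*}=1/2+\lambda_{{\bf S}}^{2}/\left(2\left(1+\sqrt{1-\left(\lambda_{{\bf S}}^{2}\right)^{2}}\right)\right)$ makes continuity at $\lambda_{{\bf S}}^{2}=0$ (both branches equal $1/2$) and strict monotonicity transparent, since as $\lambda_{{\bf S}}^{2}$ grows the numerator increases while $1+\sqrt{1-\left(\lambda_{{\bf S}}^{2}\right)^{2}}$ shrinks.

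I expect the delicate part to be the SLEM characterization, not the eigenvalue formula or the one-dimensional minimization. The subtlety is that the claimed SLEM depends only on $\lambda_{{\bf S}}^{2}$, whereas a priori it could depend on the full spectrum of ${\bf S}$; controlling this is exactly the envelope/valley argument, and in particular the non-monotone behavior of $F\left(\cdot,\epsilon\right)$ for $\epsilon>1$ must be handled with care so that the interior complex branch never overtakes $\max\left(F\left(\lambda_{{\bf S}}^{2},\epsilon\right),\epsilon/\left(1+\epsilon\right)\right)$. The bookkeeping that reconciles the real-regime threshold $\epsilon\le1$ with the complex-regime threshold $\left(1+\lambda_{{\bf S}}^{2}\right)/\left(1-\lambda_{{\bf S}}^{2}\right)$ into the single indicator-based $h$ is where most of the effort concentrates.
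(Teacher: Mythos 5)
Your proposal is correct and follows essentially the same route as the paper's Appendix A: both reduce the $2S\times 2S$ eigenproblem to $S$ independent quadratics indexed by the eigenvalues of ${\bf S}$ (you via explicit simultaneous block-diagonalization, the paper via the commuting-blocks determinant identity), then establish monotonicity of the eigenvalue moduli in $\lambda_{{\bf S}}$ separately on the real and complex regimes to pin the inner maximum at $\lambda_{{\bf S}}^{2}$, compare against $\epsilon/\left(1+\epsilon\right)$ to obtain the threshold $h$, and minimize over $\epsilon$ by showing the real branch is strictly decreasing and the complex/$\epsilon/\left(1+\epsilon\right)$ branch strictly increasing. Your ``valley-shaped'' envelope argument for $\epsilon>1$ is a compact repackaging of the paper's case analysis over the sets ${\cal R}$ and ${\cal C}$, and all the key equivalence tests you cite match those in the paper.
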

\begin{proof}[Proof of Theorem \ref{Spectrum_of_M}]
In principle, certain parts of Theorem \ref{Spectrum_of_M} (in particular
\eqref{eq:EIGEIG}) may be derived based on the analysis presented
in \cite{Consensus_Emiliano_2011}. However, the derivation is not
straightforward. For the reader's convenience, a self contained proof
of Theorem \ref{Spectrum_of_M} is presented in Appendix A.
\end{proof}
It will be important to note that all results presented so far regarding
convergence of the ADMM based consensus schema, as well as the eigenstructure
of $\boldsymbol{M}$, hold equally well even if we relax our obvious
demand that the elements of the symmetric (doubly) stochastic matrix
${\bf S}$, corresponding to the non zero entries of the adjacency
matrix of the RGG under consideration, are strictly positive. Such
an assertion can be validated by inspecting the respective proof of
Theorem \ref{Lemma_ADMM_1} in \cite{Consensus_Emiliano_2011} and
observing that the aforementioned assumption on the entries ${\bf S}$
is actually not required; it is the eigenstructure of ${\bf S}$ that
really matters. On the other hand, the property of ${\bf S}$ having
its second largest eigenvalue strictly smaller than unity is not destroyed
by relaxing strict positivity of its elements; the relaxation corresponds
to an enlargement of the space of possible choices for ${\bf S}$
\cite{Boyd2004Fastest}. In addition to the above, it can be readily
verified by Theorem \ref{Spectrum_of_M} that, if $\lambda_{{\bf S}}^{2}<1$,
$\boldsymbol{M}$ will have exactly one unit eigenvalue and that,
under the same condition, $\rho\left(\epsilon,\lambda_{{\bf S}}^{2}\right)<1$,
satisfying the requirement for convergence implied by Theorem \ref{Lemma_ADMM_1}.

All the above will be critical for us, since if we assume that the
diagonal entries of ${\bf S}$ may be nonnegative, then there are
increased degrees of freedom for choosing the best ${\bf S}$, such
that the consensus error bound of Theorem \ref{Lemma_ADMM_1} is optimized,
and there are well established tools for achieving this goal \cite{Boyd_2006Gossip},
as we will discuss below. However, it should be mentioned that mere
nonnegativity of the entries of ${\bf S}$ implies that the consensus
scheme employed for distributed filtering \textit{does not constitute
an ordinary ADMM variant anymore}, since in the usual ADMM based consensus
formulation, the constants multiplying the quadratic part of the augmented
Lagrangian are positive by default. This fact though does not compromise
performance; in fact, it results in a faster distributed averaging
scheme. Therefore, hereafter, we will assume that the zero pattern
of ${\bf S}$ ``includes'' that of the adjacency matrix of the underlying
RGG, \textit{in the sense that all elements of ${\bf S}$ are allowed
to be nonnegative, except for those corresponding to the zero pattern
of the aforementioned} \textit{adjacency matrix}.

Fig. \ref{fig:OPT_SLEM} shows the graph of the optimal SLEM $\rho^{*}\left(\lambda_{{\bf S}}^{2}\right)$
as a function of $\lambda_{{\bf S}}^{2}$, as described in Theorem
\ref{Spectrum_of_M}, as well as the nonoptimized SLEM $\rho\left(\epsilon,\lambda_{{\bf S}}^{2}\right)$,
for a set of fixed suboptimal choices of $\epsilon$'s. The obvious
fact that $\rho^{*}\left(\lambda_{{\bf S}}^{2}\right)$ is a convex,
non affine and increasing function of $\lambda_{{\bf S}}^{2}$ is
very important, because it implies that $\rho^{*}\left(\lambda_{{\bf S}}^{2}\right)$
can be further optimized with respect to the matrix ${\bf S}$, resulting
in substantial performance gain as $\lambda_{{\bf S}}^{2}$ moves
towards zero. In particular, it is true that
\begin{equation}
\min_{{\bf S}\in\mathfrak{S}}\min_{\epsilon\in\mathbb{R}_{++}}\hspace{-2pt}\rho\left(\epsilon,\lambda_{{\bf S}}^{2}\right)\hspace{-2pt}\equiv\min_{{\bf S}\in\mathfrak{S}}\rho^{*}\hspace{-2pt}\left(\lambda_{{\bf S}}^{2}\right)\hspace{-2pt}\equiv\rho^{*}\hspace{-2pt}\left(\min_{{\bf S}\in\mathfrak{S}}\lambda_{{\bf S}}^{2}\right)\hspace{-2pt},\label{eq:OPTIMIZATION_1}
\end{equation}
possibly for different decisions on ${\bf S}$ on the center and right
of \eqref{eq:OPTIMIZATION_1}, where $\mathfrak{S}$ denotes the feasible
set of symmetric (doubly) stochastic matrices, with ${\bf S}\left(k,l\right)$
being zero if $l\notin\mathsf{N}_{k}$, for all $k\in\mathbb{N}_{S}^{+}$.
It is well known that the optimization problem on the right of \eqref{eq:OPTIMIZATION_1}
is convex and that it can be solved efficiently either via semidefinite
programming, or subgradient methods \cite{Boyd_2006Gossip}. Therefore,
the objective on the left of \eqref{eq:OPTIMIZATION_1} can be globally
optimized as well.
\begin{figure}
\centering\includegraphics[clip,scale=0.61]{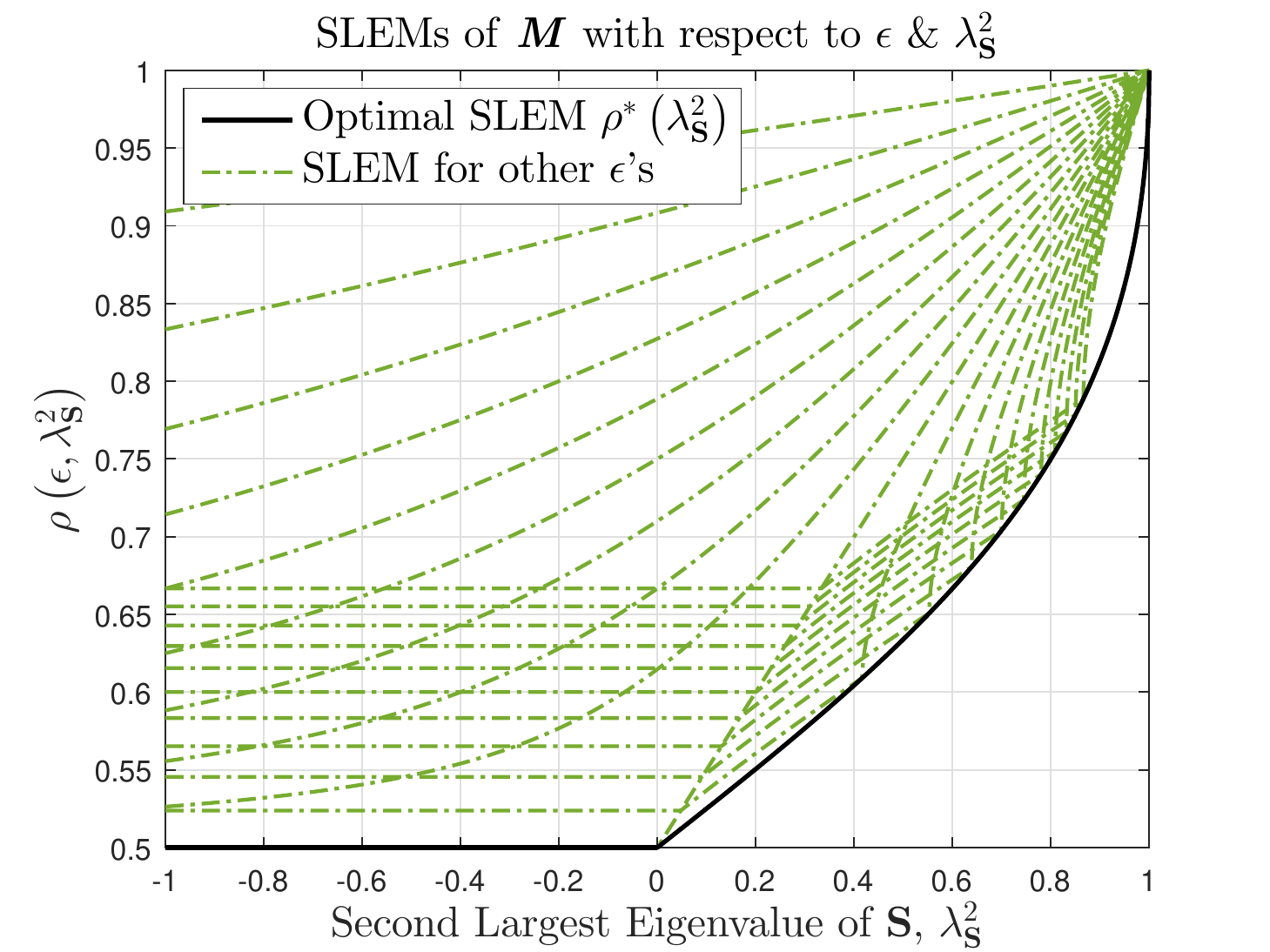}\caption{\label{fig:OPT_SLEM}SLEMs of $\boldsymbol{M}$ as a function of the
largest eigenvalue of ${\bf S}$, $\lambda_{{\bf S}}^{2}$, for various
fixed values of $\epsilon$ (green dash-dot line), as well as the
optimal choice $\epsilon^{*}\hspace{-2pt}\left(\lambda_{{\bf S}}^{2}\right)$
(black solid line).}
\end{figure}

Quite surprisingly, under some additional, albeit very mild, conditions
on the ADMM iteration index, $n$, optimization of the SLEM of $\rho\left(\epsilon,\lambda_{{\bf S}}^{2}\right)$,
either with respect to $\epsilon$ or both $\epsilon$ and ${\bf S}$
indeed implies optimization of the consensus error bound of the ADMM,
as stated in Theorem \ref{Lemma_ADMM_1}, under a certain sense. In
this respect, we present the next result.
\begin{thm}
\textbf{\textup{(ADMM Optimal Consensus Error Bound I) }}\label{Optimal_Rates}
Fix a natural $T>0$ and choose $\epsilon_{max}\ge\epsilon^{*}\hspace{-2pt}\left(\lambda_{{\bf S}}^{2}\right)$,
$\lambda_{{\bf S}}^{2}\in\left[-1,1\right)$, for some ${\bf S}\in\mathfrak{S}$.
For each $t\in\mathbb{N}_{T}^{+}$ and any $j\in\mathbb{N}_{L}^{+}$,
as long as $n>2\epsilon_{max}+1$, the choice $\epsilon\equiv\epsilon^{*}\hspace{-2pt}\left(\lambda_{{\bf S}}^{2}\right)$
is Pareto optimal for the scalar, multiobjective program
\begin{equation}
\underset{\epsilon>0}{\mathrm{minimize}}\hspace{5pt}\left[\left\{ \left(\rho\left(\epsilon,\lambda_{{\bf S}}^{2}\right)\right)^{n-1}\hspace{-2pt}/\left(1+\epsilon\right)\text{ s.t. }\epsilon<n\right\} _{n}\right]\hspace{-2pt},
\end{equation}
optimizing the RHS of \eqref{eq:Bound_1}, resulting in the optimal
consensus error bound 
\begin{equation}
\left\Vert \boldsymbol{\vartheta}_{t}^{j}\left(n\right)\hspace{-2pt}-\hspace{-2pt}\boldsymbol{\vartheta}_{t}^{j}\left(\infty\right)\right\Vert _{2}\hspace{-2pt}\le\hspace{-2pt}\gamma\hspace{-2pt}\left(\lambda_{{\bf S}}^{2}\right)\hspace{-2pt}\left\Vert \boldsymbol{\theta}_{t}\left(\boldsymbol{x}_{j}\right)\right\Vert _{2}n\hspace{-2pt}\left(\rho^{*}\hspace{-2pt}\left(\lambda_{{\bf S}}^{2}\right)\hspace{-2pt}\right)^{\hspace{-2pt}n}\hspace{-2pt},\label{eq:Bound_2}
\end{equation}
where
\begin{equation}
\gamma\hspace{-2pt}\left(\lambda_{{\bf S}}^{2}\right)\hspace{-2pt}\triangleq\hspace{-2pt}\begin{cases}
\dfrac{2\sqrt{1\hspace{-2pt}-\hspace{-2pt}\left(\lambda_{{\bf S}}^{2}\right)^{2}}}{1+\lambda_{{\bf S}}^{2}+\hspace{-2pt}\sqrt{1\hspace{-2pt}-\hspace{-2pt}\left(\lambda_{{\bf S}}^{2}\right)^{2}}}, & \text{if}\text{ }\lambda_{{\bf S}}^{2}\hspace{-2pt}\in\hspace{-2pt}\left[0,1\right)\\
1, & \text{if}\text{ }\lambda_{{\bf S}}^{2}\hspace{-2pt}\in\hspace{-2pt}\left[-1,0\right)
\end{cases},
\end{equation}
being a continuous and decreasing function of $\lambda_{{\bf S}}^{2}$
in $\left[-1,1\right)$. Additionally, choose any $\widetilde{{\bf S}}\in\mathfrak{S}$.
If, under the same setting as above, $\lambda_{\widetilde{{\bf S}}}^{2}$
is replaced by the optimal value $\min_{{\bf S}\in\mathfrak{S}}\lambda_{{\bf S}}^{2}\le\lambda_{\widetilde{{\bf S}}}^{2}$
in \eqref{eq:Bound_2}, then the resulting bound is optimal, in the
sense that
\begin{equation}
\min_{\substack{{\bf S}\in\mathfrak{S}\\
\lambda_{{\bf S}}^{2}\in\left[-1,\lambda_{\widetilde{{\bf S}}}^{2}\right]
}
}\hspace{-2pt}\hspace{-2pt}\hspace{-2pt}\hspace{-2pt}\hspace{-2pt}\hspace{-2pt}\gamma\hspace{-2pt}\left(\lambda_{{\bf S}}^{2}\right)\hspace{-2pt}\hspace{-2pt}\left(\rho^{*}\hspace{-2pt}\left(\lambda_{{\bf S}}^{2}\right)\hspace{-2pt}\right)^{\hspace{-2pt}n}\hspace{-2pt}\hspace{-2pt}\equiv\hspace{-2pt}\gamma\hspace{-2pt}\left(\min_{{\bf S}\in\mathfrak{S}}\lambda_{{\bf S}}^{2}\hspace{-2pt}\right)\hspace{-2pt}\hspace{-2pt}\left(\hspace{-2pt}\rho^{*}\hspace{-2pt}\left(\min_{{\bf S}\in\mathfrak{S}}\lambda_{{\bf S}}^{2}\hspace{-2pt}\right)\hspace{-2pt}\right)^{\hspace{-2pt}n}\hspace{-2pt}\hspace{-1.4pt}.
\end{equation}
\end{thm}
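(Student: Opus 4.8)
The plan is to read the right-hand side of \eqref{eq:Bound_1} as a family of scalar objectives indexed by the iteration count $n$, and to drive the whole argument off the explicit variational description of the SLEM $\rho$ supplied by Theorem \ref{Spectrum_of_M}. Throughout I would write $s\triangleq\lambda_{{\bf S}}^{2}$ and $w\triangleq\sqrt{1-s^{2}}$. Since the factor $\left\Vert \boldsymbol{\theta}_{t}\left(\boldsymbol{x}_{j}\right)\right\Vert _{2}$ and the multiplier $n$ do not depend on $\epsilon$, minimizing the bound for each admissible $n$ is the same as minimizing $g_{n}\left(\epsilon\right)\triangleq\rho\left(\epsilon,s\right)^{n-1}/\left(1+\epsilon\right)$, which is exactly the objective appearing in the stated multiobjective program.

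First I would record from \eqref{eq:SLEM_positive}--\eqref{eq:Optimal_SLEM} that $\rho\left(\cdot,s\right)$ has a unique global minimizer $\epsilon^{*}\left(s\right)$ with value $\rho^{*}\left(s\right)$, being strictly decreasing before it and strictly increasing after it. The Pareto argument then splits in two. If $\epsilon<\epsilon^{*}\left(s\right)$, then simultaneously $\rho\left(\epsilon,s\right)>\rho^{*}\left(s\right)$ and $\left(1+\epsilon\right)^{-1}>\left(1+\epsilon^{*}\left(s\right)\right)^{-1}$, so $g_{n}\left(\epsilon\right)>g_{n}\left(\epsilon^{*}\left(s\right)\right)$ for \emph{every} admissible $n$ and $\epsilon^{*}\left(s\right)$ strictly dominates any smaller $\epsilon$. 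If $\epsilon>\epsilon^{*}\left(s\right)$, the two factors trade off, $\rho\left(\epsilon,s\right)>\rho^{*}\left(s\right)$ but $\left(1+\epsilon\right)^{-1}<\left(1+\epsilon^{*}\left(s\right)\right)^{-1}$; the geometric factor $\left(\rho\left(\epsilon,s\right)/\rho^{*}\left(s\right)\right)^{n-1}$ forces $g_{n}\left(\epsilon\right)>g_{n}\left(\epsilon^{*}\left(s\right)\right)$ for all large $n$, so no such $\epsilon$ dominates $\epsilon^{*}\left(s\right)$. Hence $\epsilon\equiv\epsilon^{*}\left(s\right)$ is Pareto optimal. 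The standing hypothesis $n>2\epsilon_{max}+1\ge2\epsilon^{*}\left(s\right)+1$ enters only to ensure, through the admissibility window $n\in\mathbb{N}^{2}\cap\mathbb{N}^{\left\lfloor \epsilon+1\right\rfloor }$ of Theorem \ref{Lemma_ADMM_1}, that \eqref{eq:Bound_1} holds simultaneously at $\epsilon^{*}\left(s\right)$ and at every competitor $\epsilon\le\epsilon_{max}$, so that the comparison is over a common objective set. Substituting $\epsilon\equiv\epsilon^{*}\left(s\right)$ into \eqref{eq:Bound_1} and writing $\rho^{*}\left(s\right)^{n-1}/\left(1+\epsilon^{*}\left(s\right)\right)=\left[\left(1+\epsilon^{*}\left(s\right)\right)\rho^{*}\left(s\right)\right]^{-1}\rho^{*}\left(s\right)^{n}$ yields \eqref{eq:Bound_2}, once I verify the identity $\gamma\left(s\right)=\left[\left(1+\epsilon^{*}\left(s\right)\right)\rho^{*}\left(s\right)\right]^{-1}$ by direct substitution of the closed forms; the resulting expression visibly exhibits $\gamma$ as continuous and decreasing on $\left[-1,1\right)$.

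For the final ``additionally'' claim I would study the envelope value
\[
F_{n}\left(s\right)\triangleq\gamma\left(s\right)\rho^{*}\left(s\right)^{n}\equiv\dfrac{\rho^{*}\left(s\right)^{n-1}}{1+\epsilon^{*}\left(s\right)}
\]
and show it is nondecreasing in $s$ on $\left[-1,\lambda_{\widetilde{{\bf S}}}^{2}\right]$ whenever $n>2\epsilon_{max}+1$. On $\left[-1,0\right)$ one has $\rho^{*}\equiv1/2$, $\epsilon^{*}\equiv1$, so $F_{n}\equiv\left(1/2\right)^{n}$ is constant; the content lies on $\left[0,1\right)$, where I would differentiate $\log F_{n}=\left(n-1\right)\log\rho^{*}-\log\left(1+\epsilon^{*}\right)$. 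Using $\epsilon^{*}=1/w$ and $\rho^{*}=\left(1+s+w\right)/\left(2\left(1+w\right)\right)$, a short computation gives the clean identities $\left(\rho^{*}\right)'/\rho^{*}=1/\left[w\left(1+s+w\right)\right]$ and $\left(\epsilon^{*}\right)'/\left(1+\epsilon^{*}\right)=s/\left[w^{2}\left(1+w\right)\right]$, so that $\left(\log F_{n}\right)'>0$ is equivalent to
\[
n>1+\dfrac{s\left(1+s+w\right)}{w\left(1+w\right)}.
\]
The decisive estimate is the elementary bound $s\left(1+s+w\right)\le2\left(1+w\right)$ on $\left[0,1\right)$, which holds because $s+s^{2}+sw\le2+w\le2+2w$; this caps the right-hand side above by $1+2/w=1+2\epsilon^{*}\left(s\right)\le1+2\epsilon_{max}$, using that $\epsilon^{*}$ is increasing and $s\le\lambda_{\widetilde{{\bf S}}}^{2}$. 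Thus $n>2\epsilon_{max}+1$ makes $F_{n}$ strictly increasing on $\left[0,\lambda_{\widetilde{{\bf S}}}^{2}\right]$, hence nondecreasing on all of $\left[-1,\lambda_{\widetilde{{\bf S}}}^{2}\right]$, so its minimum over the feasible eigenvalues is attained at the smallest one, $\min_{{\bf S}\in\mathfrak{S}}\lambda_{{\bf S}}^{2}$, which is the asserted identity.

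The step I expect to be the main obstacle is precisely this monotonicity of $F_{n}$, because its direction is genuinely $n$-dependent: for small $n$ the prefactor $\left(1+\epsilon^{*}\right)^{-1}$ dominates and $F_{n}$ is actually \emph{decreasing} in $s$, and only past the threshold $n>2\epsilon_{max}+1$ does the geometric factor $\rho^{*}\left(s\right)^{n-1}$ take over and reverse it. Showing that this particular threshold is exactly what the derivative test demands, uniformly over the feasible range of $s$, is the crux; by contrast, the Pareto comparison and the identity $\gamma=\left[\left(1+\epsilon^{*}\right)\rho^{*}\right]^{-1}$ are comparatively routine once the spectral facts of Theorem \ref{Spectrum_of_M} are in hand.
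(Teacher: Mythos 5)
Your proposal is correct, and the second half of your argument (monotonicity of $\gamma\left(\lambda_{{\bf S}}^{2}\right)\left(\rho^{*}\left(\lambda_{{\bf S}}^{2}\right)\right)^{n}$ in $\lambda_{{\bf S}}^{2}$) is essentially the paper's own: the paper also differentiates this quantity, arrives at the threshold $n>\left(1+\lambda_{{\bf S}}^{2}\right)/\sqrt{1-\left(\lambda_{{\bf S}}^{2}\right)^{2}}$ (your expression $1+s\left(1+s+w\right)/\left(w\left(1+w\right)\right)$ simplifies to exactly this, via $w^{2}+s^{2}=1$), and checks that $n>2\epsilon_{max}+1$ implies it. Where you genuinely diverge is the Pareto part. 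The paper computes $\partial f_{n}/\partial\epsilon$ for $f_{n}=\rho^{n-1}/\left(1+\epsilon\right)$, splits into the cases $\lambda_{{\bf S}}^{2}\in{\cal R}$ versus $\lambda_{{\bf S}}^{2}\in{\cal C}$ to show $f_{n}$ inherits the unimodal shape of $\rho$ on $\left(0,\left(n-1\right)/2\right]$, and then handles competitors $\epsilon>\epsilon_{max}$ by exhibiting, for each such $\epsilon$, a sufficiently large witness index $\widetilde{n}$ with $\epsilon\le\left(\widetilde{n}-1\right)/2$ at which $f_{\widetilde{n}}\left(\epsilon\right)>f_{\widetilde{n}}\left(\epsilon^{*}\right)$. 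You replace the derivative computation entirely: for $\epsilon<\epsilon^{*}$ both factors $\rho^{n-1}$ and $\left(1+\epsilon\right)^{-1}$ are simultaneously worse, killing domination in every objective at once; for $\epsilon>\epsilon^{*}$ the ratio $\left(\rho\left(\epsilon,\lambda_{{\bf S}}^{2}\right)/\rho^{*}\right)^{n-1}$ diverges, so some (indeed cofinitely many) objectives in the unbounded index family are strictly worse. This is a cleaner and more elementary witness argument that needs only the strict unimodality of $\rho\left(\cdot,\lambda_{{\bf S}}^{2}\right)$ from Theorem \ref{Spectrum_of_M}; its cost is that it is purely qualitative about where the crossover in $n$ happens, whereas the paper's derivative analysis pins down the explicit thresholds $n>2\epsilon+1$ and $n>\epsilon+1$ that motivate the standing hypothesis $n>2\epsilon_{max}+1$ in the first place. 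You are also right that, in your version, that hypothesis is only doing work in guaranteeing the validity of \eqref{eq:Bound_1} and in the second part of the theorem. Finally, your explicit verification of the identity $\gamma\left(\lambda_{{\bf S}}^{2}\right)\equiv\left[\left(1+\epsilon^{*}\right)\rho^{*}\right]^{-1}$, which converts the substituted bound into the form \eqref{eq:Bound_2}, is a step the paper leaves implicit, and it checks out in both branches of the definition of $\gamma$.
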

\begin{proof}[Proof of Theorem \ref{Optimal_Rates}]
See Appendix B.
\end{proof}
A graphical demonstration of the second, somewhat technical part of
Theorem \ref{Optimal_Rates} is shown in Fig. \ref{fig:ERROR_BOUNDS}.
As it can be readily observed, given an arbitrary, possibly ``bad''
$\widetilde{{\bf S}}\in\mathfrak{S}$, corresponding to a specific,
possibly large, second largest eigenvalue $\lambda_{\widetilde{{\bf S}}}^{2}$
(in the figure, this value equals $0.995$), the best convergence
rate is attained at $\min_{{\bf S}\in\mathfrak{S}}\lambda_{{\bf S}}^{2}$,
when the feasible set is constrained such that $\lambda_{{\bf S}}^{2}\in\left[-1,\lambda_{\widetilde{{\bf S}}}^{2}\right]$.
Also note that, because $\epsilon^{*}\hspace{-2pt}\left(\lambda_{\widetilde{{\bf S}}}^{2}\right)$
is increasing in $\lambda_{\widetilde{{\bf S}}}^{2}$, Pareto efficiency
of \eqref{eq:Bound_2} is preserved when $\lambda_{\widetilde{{\bf S}}}^{2}$
is replaced by $\min_{{\bf S}\in\mathfrak{S}}\lambda_{{\bf S}}^{2}$.
\begin{figure}
\centering\includegraphics[clip,scale=0.61]{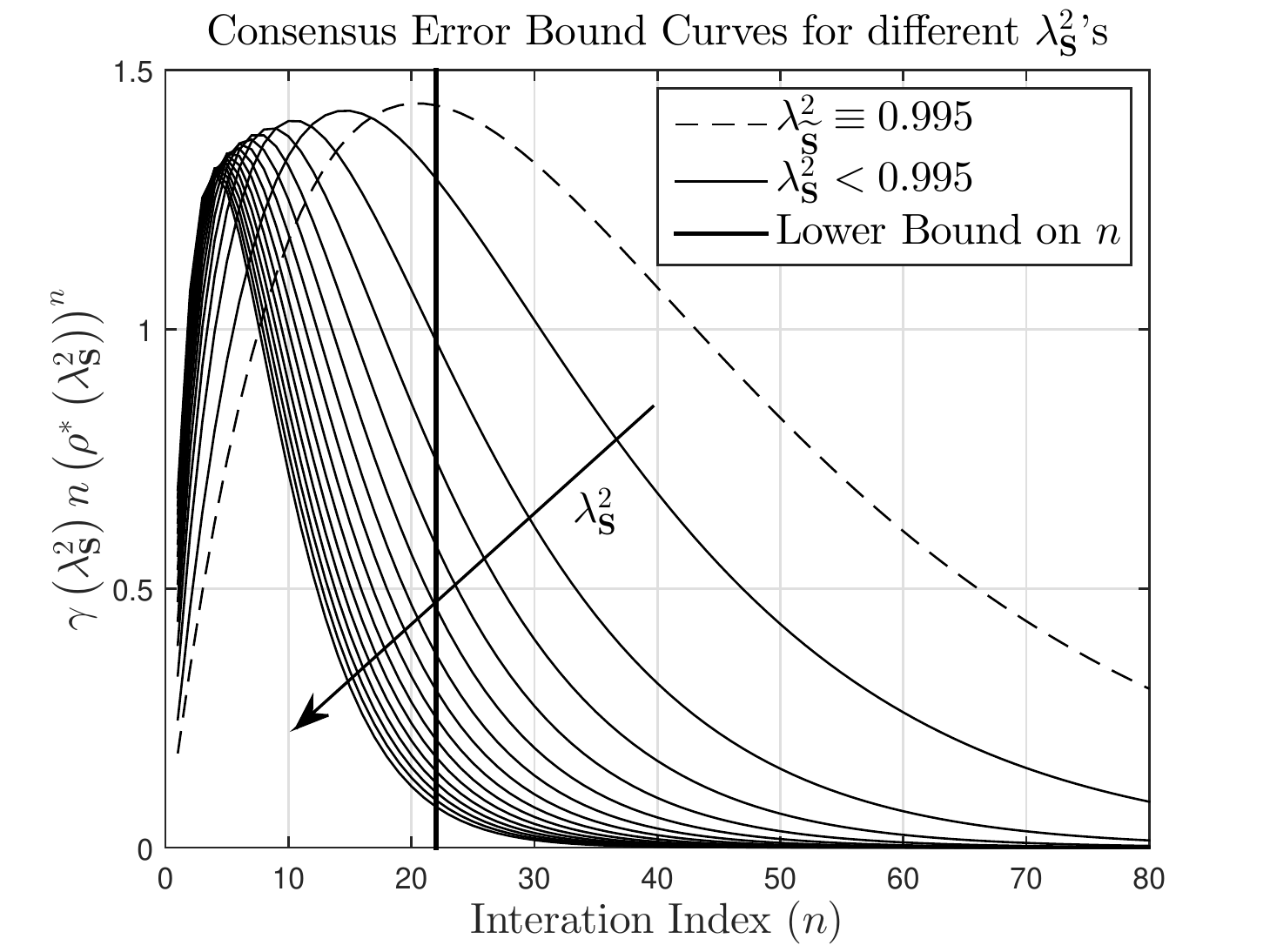}\caption{\label{fig:ERROR_BOUNDS}Demonstration of the second part of Theorem
\ref{Optimal_Rates}: Consensus error bounds for various values of
$\lambda_{{\bf S}}^{2}$, when $\widetilde{{\bf S}}$ is chosen such
that $\lambda_{\widetilde{{\bf S}}}^{2}\equiv0.995$. }
\end{figure}
\begin{figure*}
\centering$\hspace{-11pt}$\subfloat[\label{fig:Channel_Tracking}]{\centering\includegraphics[scale=0.61]{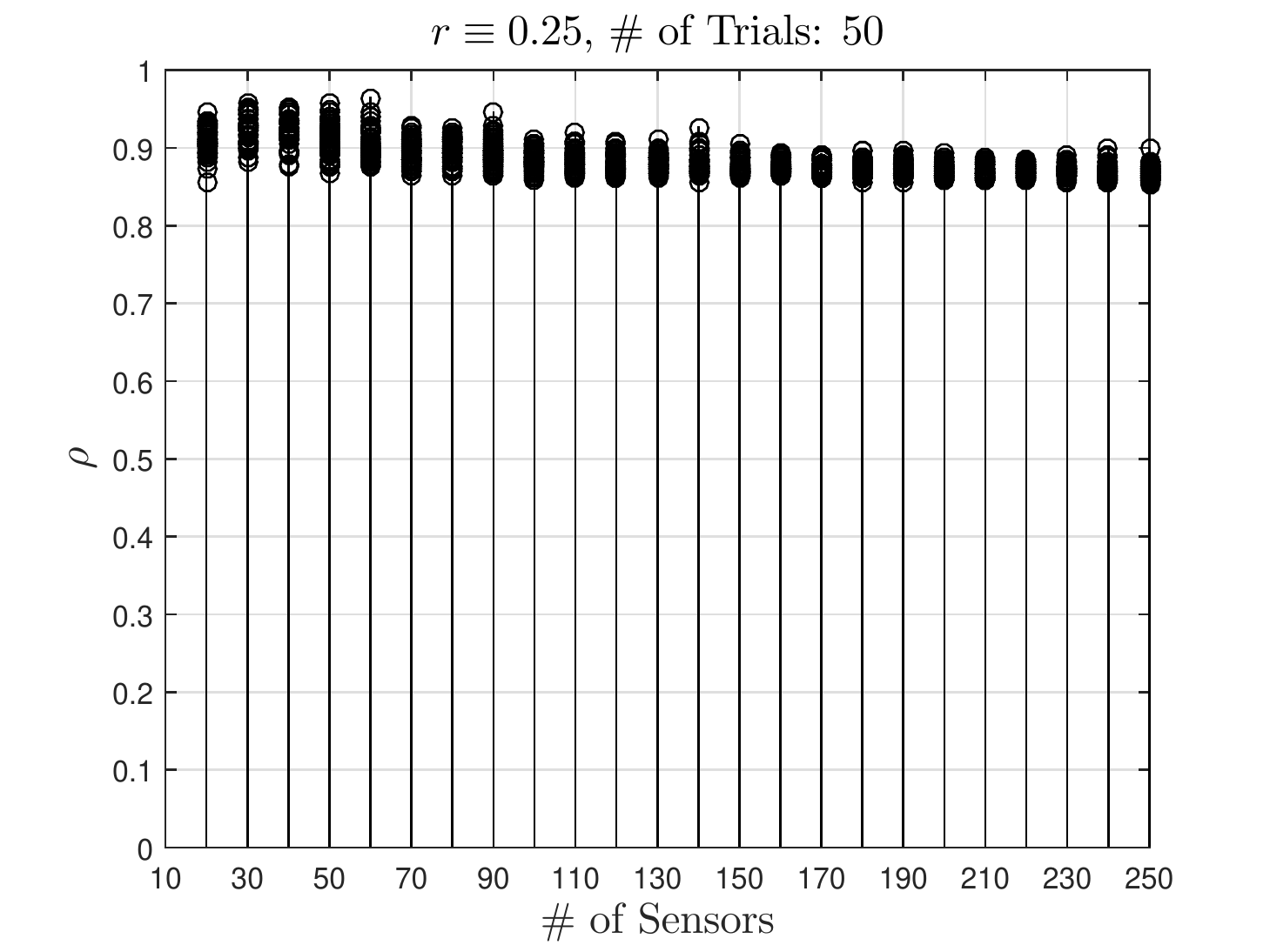}

}$\hspace{-20pt}$\subfloat[\label{fig:Spatial_Pred}]{\centering\includegraphics[clip,scale=0.61]{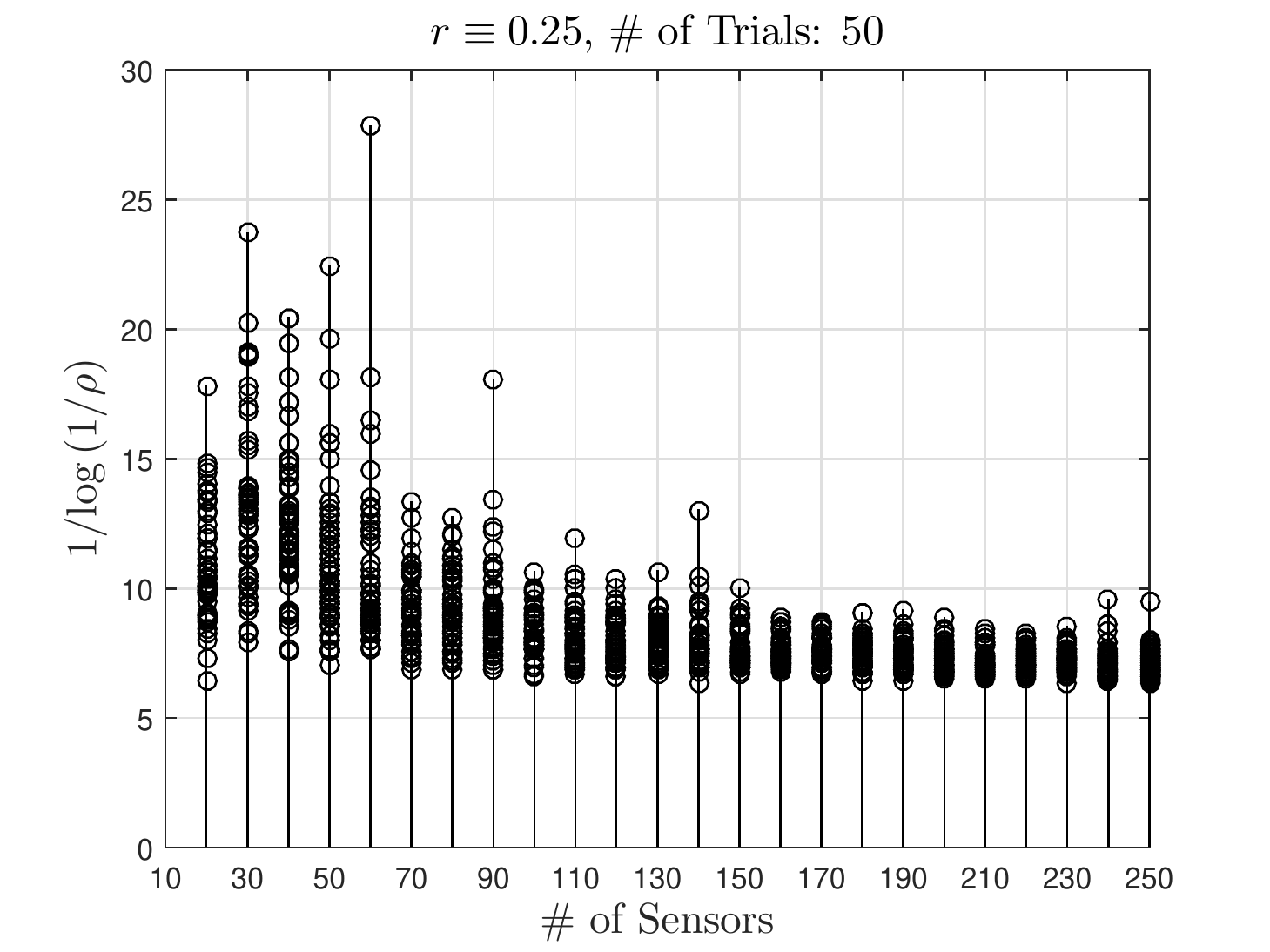}}\caption{\label{fig:MIXING}The quantities (a) $\rho$ and (b) $\tau\equiv1/\log\left(1/\rho\right)$,
as functions of the number of sensors $S$. For each value of $S$,
$50$ independent realizations of the respective quantities are shown.}
\end{figure*}

Some of the technicalities of Theorem \ref{Optimal_Rates} may be
avoided if one focuses on optimizing the looser consensus error bound
\begin{equation}
\left\Vert \boldsymbol{\vartheta}_{t}^{j}\left(n\right)-\boldsymbol{\vartheta}_{t}^{j}\left(\infty\right)\right\Vert _{2}\le\left\Vert \boldsymbol{\theta}_{t}\left(\boldsymbol{x}_{j}\right)\right\Vert _{2}n\rho^{n-1},\label{eq:SIMPLER_Bound_1}
\end{equation}
since $\left(1+\epsilon\right)^{-1}<1$. Indeed, the following result
holds; proof is omitted.
\begin{thm}
\textbf{\textup{(ADMM Optimal Consensus Error Bound II) }}\label{Optimal_Rates-1}Fix
a natural $T>0$ and choose $\epsilon_{max}\ge\epsilon^{*}\hspace{-2pt}\left(\lambda_{\widetilde{{\bf S}}}^{2}\right)$,
$\lambda_{\widetilde{{\bf S}}}^{2}\in\left[-1,1\right)$, for some
$\widetilde{{\bf S}}\in\mathfrak{S}$. For each $t\in\mathbb{N}_{T}^{+}$
and any $j\in\mathbb{N}_{L}^{+}$, the globally optimal consensus
error bound corresponding to the RHS of \eqref{eq:SIMPLER_Bound_1},
with respect to
\begin{equation}
\left(\epsilon,{\bf S}\right)\in\left(0,\epsilon_{max}\right]\hspace{-2pt}\times\hspace{-2pt}\left\{ \boldsymbol{C}\in\mathfrak{S}\left|\lambda_{\boldsymbol{C}}^{2}\in\hspace{-2pt}\left[-1,\lambda_{\widetilde{{\bf S}}}^{2}\right]\right.\hspace{-2pt}\right\} \hspace{-2pt},
\end{equation}
is given by 
\begin{equation}
\left\Vert \boldsymbol{\vartheta}_{t}^{j}\left(n\right)\hspace{-2pt}-\hspace{-2pt}\boldsymbol{\vartheta}_{t}^{j}\left(\infty\right)\right\Vert _{2}\hspace{-2pt}\le\hspace{-2pt}\left\Vert \boldsymbol{\theta}_{t}\hspace{-2pt}\left(\boldsymbol{x}_{j}\right)\right\Vert _{2}\hspace{-1pt}n\hspace{-2pt}\left(\rho^{*}\hspace{-2pt}\left(\min_{{\bf S}\in\mathfrak{S}}\lambda_{{\bf S}}^{2}\right)\hspace{-2pt}\right)^{\hspace{-2pt}n-1}\hspace{-2pt},\label{eq:Bound_2-1}
\end{equation}
for all $n>\max\left\{ 2,\epsilon_{max}\right\} $.
\end{thm}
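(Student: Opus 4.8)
The plan is to observe that, with $t$, $j$, and $n$ held fixed, the right-hand side of \eqref{eq:SIMPLER_Bound_1} depends on the decision variables $\left(\epsilon,{\bf S}\right)$ only through the SLEM $\rho\left(\epsilon,\lambda_{{\bf S}}^{2}\right)$, via the factor $n\rho^{n-1}$. I would work throughout in the regime $n>\max\left\{ 2,\epsilon_{max}\right\}$, which guarantees $n\ge\left\lfloor \epsilon+1\right\rfloor$ for every $\epsilon\in\left(0,\epsilon_{max}\right]$ together with $n\ge2$, and is therefore precisely what is needed for the base bound of Theorem \ref{Lemma_ADMM_1} to hold simultaneously for all admissible $\epsilon$. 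In this regime the exponent $n-1$ is a fixed positive integer and the map $u\mapsto u^{n-1}$ is strictly increasing on $\left[0,1\right)$, where $\rho$ lives (recall $\rho<1$ whenever $\lambda_{{\bf S}}^{2}<1$, which holds for every ${\bf S}\in\mathfrak{S}$). Consequently, minimizing \eqref{eq:SIMPLER_Bound_1} over the feasible set reduces to the scalar problem of minimizing $\rho\left(\epsilon,\lambda_{{\bf S}}^{2}\right)$ over $\left(\epsilon,{\bf S}\right)\in\left(0,\epsilon_{max}\right]\times\left\{ \boldsymbol{C}\in\mathfrak{S}:\lambda_{\boldsymbol{C}}^{2}\in\left[-1,\lambda_{\widetilde{{\bf S}}}^{2}\right]\right\}$, with no Pareto/multiobjective argument needed here (in contrast to Theorem \ref{Optimal_Rates}), precisely because dropping the $\left(1+\epsilon\right)^{-1}$ factor leaves a single scalar objective.

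First I would carry out the inner minimization over $\epsilon$ for fixed ${\bf S}$. By Theorem \ref{Spectrum_of_M}, for fixed $\lambda_{{\bf S}}^{2}$ the map $\epsilon\mapsto\rho\left(\epsilon,\lambda_{{\bf S}}^{2}\right)$ attains its global minimum over $\epsilon>0$ at $\epsilon^{*}\left(\lambda_{{\bf S}}^{2}\right)$, with value $\rho^{*}\left(\lambda_{{\bf S}}^{2}\right)$ given in \eqref{eq:Optimal_SLEM}. The only point to verify is that this unconstrained minimizer is not excluded by the box constraint $\epsilon\le\epsilon_{max}$. This is where the hypothesis $\epsilon_{max}\ge\epsilon^{*}\left(\lambda_{\widetilde{{\bf S}}}^{2}\right)$ enters: since $\epsilon^{*}\left(\cdot\right)$ is nondecreasing (immediate from its piecewise closed form in Theorem \ref{Spectrum_of_M}) and every feasible ${\bf S}$ satisfies $\lambda_{{\bf S}}^{2}\le\lambda_{\widetilde{{\bf S}}}^{2}$, I obtain $\epsilon^{*}\left(\lambda_{{\bf S}}^{2}\right)\le\epsilon^{*}\left(\lambda_{\widetilde{{\bf S}}}^{2}\right)\le\epsilon_{max}$, so $\epsilon^{*}\left(\lambda_{{\bf S}}^{2}\right)\in\left(0,\epsilon_{max}\right]$. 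Hence the constrained inner minimum coincides with the unconstrained one, namely $\rho^{*}\left(\lambda_{{\bf S}}^{2}\right)$.

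Next I would perform the outer minimization over ${\bf S}$. By Theorem \ref{Spectrum_of_M}, $\rho^{*}$ is a continuous and increasing function of $\lambda_{{\bf S}}^{2}$, so minimizing $\rho^{*}\left(\lambda_{{\bf S}}^{2}\right)$ over the feasible matrices is equivalent to minimizing $\lambda_{{\bf S}}^{2}$ subject to $\lambda_{{\bf S}}^{2}\in\left[-1,\lambda_{\widetilde{{\bf S}}}^{2}\right]$. Because $\widetilde{{\bf S}}\in\mathfrak{S}$, the unconstrained optimum $\min_{{\bf S}\in\mathfrak{S}}\lambda_{{\bf S}}^{2}$ is no larger than $\lambda_{\widetilde{{\bf S}}}^{2}$, while symmetry and double stochasticity force $\lambda_{{\bf S}}^{2}\ge-1$ for every ${\bf S}$; thus $\min_{{\bf S}\in\mathfrak{S}}\lambda_{{\bf S}}^{2}\in\left[-1,\lambda_{\widetilde{{\bf S}}}^{2}\right]$ is feasible and the outer minimum equals $\rho^{*}\left(\min_{{\bf S}\in\mathfrak{S}}\lambda_{{\bf S}}^{2}\right)$. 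Substituting this value of $\rho$ back into \eqref{eq:SIMPLER_Bound_1} then yields \eqref{eq:Bound_2-1}.

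I expect the only genuine subtlety—and hence the step to state most carefully—to be the compatibility of the two box constraints with the corresponding unconstrained optima: namely, that the $\epsilon$-constraint is inactive at the optimizer (resting on monotonicity of $\epsilon^{*}$ together with the standing hypothesis on $\epsilon_{max}$), and that the attainable value $\min_{{\bf S}\in\mathfrak{S}}\lambda_{{\bf S}}^{2}$ indeed lies inside $\left[-1,\lambda_{\widetilde{{\bf S}}}^{2}\right]$. Everything else is a routine reduction of a bound that is monotone in $\rho$ to the separable minimization of $\rho$, which Theorem \ref{Spectrum_of_M} has already solved in closed form.
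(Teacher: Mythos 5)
Your proof is correct and follows exactly the route the paper intends: the paper omits the proof of Theorem \ref{Optimal_Rates-1} precisely because, once the $\left(1+\epsilon\right)^{-1}$ factor is dropped, the bound is monotone in $\rho$ alone, and the separable minimization $\min_{{\bf S}}\min_{\epsilon}\rho\left(\epsilon,\lambda_{{\bf S}}^{2}\right)\equiv\rho^{*}\hspace{-2pt}\left(\min_{{\bf S}\in\mathfrak{S}}\lambda_{{\bf S}}^{2}\right)$ is already recorded in \eqref{eq:OPTIMIZATION_1} using Theorem \ref{Spectrum_of_M}. Your additional checks (monotonicity of $\epsilon^{*}$ so that the box constraint $\epsilon\le\epsilon_{max}$ is inactive, feasibility of $\min_{{\bf S}\in\mathfrak{S}}\lambda_{{\bf S}}^{2}$ in $\left[-1,\lambda_{\widetilde{{\bf S}}}^{2}\right]$, and that $n>\max\left\{ 2,\epsilon_{max}\right\} $ covers the hypothesis of Theorem \ref{Lemma_ADMM_1} for all admissible $\epsilon$) are exactly the points that need verifying and are handled correctly.
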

Finally, in our main results, presented in Section \ref{sec:MAIN_RESULTS},
the quantity $\tau\triangleq1/\log\left(1/\rho\right)$ will be of
great importance in establishing a minimal number of ADMM iterations
required between each pair of subsequent observation times, ensuring
stability of the distributed nonlinear filter under consideration,
over a finite horizon of filtering operation, $T$. Interestingly,
$\tau$ does not appear coincidentally in our analysis; when analyzing
the mixing properties of Markov chains over graphs, $\tau$ constitutes
a known quantity, called the \textit{mixing time} of the chain \cite{Boyd2004Fastest}.
In this fashion, we call $\mbox{\ensuremath{\tau}}$ the mixing time
of $\boldsymbol{M}$, characterizing the convergence rate of the ADMM.

Of course, it is both expected and intuitively correct that $\rho$
and, therefore, $\tau$, will be both dependent on the connectivity
density of the underlying RGG (measured, for instance, through the
sparsity of the adjacency matrix of the graph), modeling the connectivity
of the DNA under study. More specifically, we should at least expect
that $\rho$ (which constitutes a random variable) will be in general
smaller in ``more strongly'' connected RGGs, and the same for $\tau$.
Also, since more sensors uniformly scattered in the same area mean
essentially higher connectivity density, the same behavior is expected
as the number of sensors increase. For example, consider the case
where we are given a DNA corresponding to a \textit{strongly} connected
RGG. Unfortunately though, an analytic characterization of the aforementioned
properties boils down to analyzing the relevant properties of $\lambda_{{\bf S}}^{2}$,
which constitutes a notoriously difficult (and still open) mathematical
problem. Despite of this difficulty, we can verify the aforementioned
behavior experimentally. Indeed, Fig. \ref{fig:MIXING} shows $\tau$
(a) and $\rho$ (b) as functions of the number of sensors, $S$. For
each value of $S$, $50$ trials are presented. In this example, ${\bf S}$
is suboptimally chosen according to the \textit{maximum degree chain}
construction \cite{Boyd2004Fastest}, whereas $\epsilon$ is optimally
chosen. We readily observe that our intuitive assertions stated above
are successfully verified. Based on these results, in all the subsequent
analysis, we will consider $\tau$ and $\rho$ to be \textit{bounded
and in general decreasing} functions of the number of sensors in the
network, $S$, and/or the connectivity threshold, $r$. This fact
constitutes the direct conclusion of Fig. \ref{fig:MIXING}. It is
not straightforward to determine how the actual performance of the
whole distributed filtering scheme, is affected by $S$, the threshold
$r$ and the corresponding values of $\rho$ and $\tau$. This will
be precisely the topic of interest in the next section, where we show
that, in order to guarantee $\varepsilon$-global agreement on filtering
estimates, it is sufficient that the number of consensus iterations
per sensor, per slow time, $n$, scales linearly with $\tau$ and,
at worst, loglinearly with $S$ (note that, on average, $S$ is inversely
proportional to $\tau$, as discussed above).

\section{\label{sec:MAIN_RESULTS}Uniform Stability of Distributed Filtering}

This section is devoted to the presentation and proof of the main
results of the paper. Hereafter, for notational brevity, we will set
$\gamma\hspace{-2pt}\left(\lambda_{{\bf S}}^{2}\right)\equiv\gamma$
and $\rho^{*}\hspace{-2pt}\left(\lambda_{{\bf S}}^{2}\right)\equiv\rho$,
in accordance to Theorem \ref{Optimal_Rates} presented in Section
\ref{sec:Preliminaries}. Therefore, it will also be assumed that
everything holds as long as $n>2\epsilon_{max}+1$, on top of any
other condition on the iteration index, $n$. If, for any reason,
one wishes to consider Theorems \ref{Lemma_ADMM_1} or \ref{Optimal_Rates-1},
the aforementioned statements have to be modified accordingly.

The following lemmata, borrowed from \cite{KalPetNonlinear_2015},
will be helpful in the subsequent analysis.
\begin{lem}
\textbf{\textup{(Telescoping Bound for Matrix Products \cite{KalPetNonlinear_2015})}}\label{Elementary_Lemma_2}
Consider the collections of arbitrary, square matrices
\begin{flalign*}
\left\{ {\bf A}_{i}\in\mathbb{C}^{N\times N}\right\} _{i\in\mathbb{N}_{n}}\quad\text{and}\quad & \left\{ {\bf B}_{i}\in\mathbb{C}^{N\times N}\right\} _{i\in\mathbb{N}_{n}}.
\end{flalign*}
For any submultiplicative matrix norm $\left\Vert \cdot\right\Vert _{\mathfrak{M}}$,
it is true that\footnote{Hereafter, we adopt the conventions $\prod_{j=0}^{-1}\left(\cdot\right)\equiv\prod_{j=n+1}^{n}\left(\cdot\right)\triangleq1$. }
\begin{align}
\left\Vert \prod_{i=0}^{n}{\bf A}_{i}-\prod_{i=0}^{n}{\bf B}_{i}\right\Vert _{\mathfrak{M}} & \le\sum_{i=0}^{n}\left(\prod_{j=0}^{i-1}\left\Vert {\bf A}_{j}\right\Vert _{\mathfrak{M}}\right)\left(\prod_{j=i+1}^{n}\left\Vert {\bf B}_{j}\right\Vert _{\mathfrak{M}}\right)\left\Vert {\bf A}_{i}-{\bf B}_{i}\right\Vert _{\mathfrak{M}}.\label{eq:Product_Inequality}
\end{align}
\end{lem}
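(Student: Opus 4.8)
The plan is to reduce everything to a single noncommutative telescoping identity, after which the bound \eqref{eq:Product_Inequality} drops out by the triangle inequality and submultiplicativity alone. First I would introduce, for each index $i\in\left\{ -1\right\} \cup\mathbb{N}_{n}$, the hybrid matrix product
\[
\boldsymbol{C}_{i}\triangleq\left(\prod_{j=0}^{i}{\bf A}_{j}\right)\left(\prod_{j=i+1}^{n}{\bf B}_{j}\right),
\]
which, under the stated empty-product conventions, interpolates between the all-$\boldsymbol{B}$ product $\boldsymbol{C}_{-1}=\prod_{j=0}^{n}{\bf B}_{j}$ and the all-$\boldsymbol{A}$ product $\boldsymbol{C}_{n}=\prod_{j=0}^{n}{\bf A}_{j}$. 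Intuitively, as $i$ increases by one we swap a single factor from $\boldsymbol{B}$ to $\boldsymbol{A}$, so the family $\left\{ \boldsymbol{C}_{i}\right\} $ is designed precisely to telescope.

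Next I would write the target difference as the telescoping collapse
\[
\prod_{i=0}^{n}{\bf A}_{i}-\prod_{i=0}^{n}{\bf B}_{i}=\boldsymbol{C}_{n}-\boldsymbol{C}_{-1}=\sum_{i=0}^{n}\left(\boldsymbol{C}_{i}-\boldsymbol{C}_{i-1}\right),
\]
and then factor each consecutive difference. Since $\boldsymbol{C}_{i}$ and $\boldsymbol{C}_{i-1}$ share the common left block $\prod_{j=0}^{i-1}{\bf A}_{j}$ and the common right block $\prod_{j=i+1}^{n}{\bf B}_{j}$, differing only in whether the middle factor is ${\bf A}_{i}$ or ${\bf B}_{i}$, each summand collapses to
\[
\boldsymbol{C}_{i}-\boldsymbol{C}_{i-1}=\left(\prod_{j=0}^{i-1}{\bf A}_{j}\right)\left({\bf A}_{i}-{\bf B}_{i}\right)\left(\prod_{j=i+1}^{n}{\bf B}_{j}\right),
\]
yielding an exact noncommutative identity for the difference of the two products.

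Finally I would apply $\left\Vert \cdot\right\Vert _{\mathfrak{M}}$ to both sides, invoke the triangle inequality across the sum over $i$, and use submultiplicativity on each of the three grouped factors to obtain, for each $i$,
\[
\left\Vert \boldsymbol{C}_{i}-\boldsymbol{C}_{i-1}\right\Vert _{\mathfrak{M}}\le\left(\prod_{j=0}^{i-1}\left\Vert {\bf A}_{j}\right\Vert _{\mathfrak{M}}\right)\left\Vert {\bf A}_{i}-{\bf B}_{i}\right\Vert _{\mathfrak{M}}\left(\prod_{j=i+1}^{n}\left\Vert {\bf B}_{j}\right\Vert _{\mathfrak{M}}\right),
\]
which is exactly the $i$-th summand on the right-hand side of the claim; summing over $i\in\mathbb{N}_{n}$ finishes the argument.

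The hard part is not any computation but the boundary bookkeeping: because the matrices do not commute, the order of all products must be preserved verbatim, and the conventions $\prod_{j=0}^{-1}\left(\cdot\right)=\prod_{j=n+1}^{n}\left(\cdot\right)\triangleq1$ must be invoked precisely at $i=0$ (empty left $\boldsymbol{A}$-block) and at $i=n$ (empty right $\boldsymbol{B}$-block), so that $\boldsymbol{C}_{-1}$ and $\boldsymbol{C}_{n}$ genuinely reduce to the pure $\boldsymbol{B}$- and $\boldsymbol{A}$-products. Once these endpoints are pinned down, the telescoping cancellation and the subsequent norm estimate are entirely mechanical, and essentially all the content of the lemma lives in the factorization of $\boldsymbol{C}_{i}-\boldsymbol{C}_{i-1}$. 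An equivalent route is a direct induction on $n$, peeling off the last factor through the recursion $\prod_{i=0}^{n}{\bf A}_{i}-\prod_{i=0}^{n}{\bf B}_{i}=\left(\prod_{i=0}^{n-1}{\bf A}_{i}\right)\left({\bf A}_{n}-{\bf B}_{n}\right)+\left(\prod_{i=0}^{n-1}{\bf A}_{i}-\prod_{i=0}^{n-1}{\bf B}_{i}\right){\bf B}_{n}$, but the hybrid-telescoping route above avoids carrying an inductive hypothesis.
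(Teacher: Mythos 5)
Your proof is correct. Note that the paper itself gives no proof of this lemma --- it is imported verbatim from \cite{KalPetNonlinear_2015} --- so there is nothing to compare against; your hybrid-product telescoping decomposition $\boldsymbol{C}_{i}-\boldsymbol{C}_{i-1}=\bigl(\prod_{j=0}^{i-1}{\bf A}_{j}\bigr)\left({\bf A}_{i}-{\bf B}_{i}\right)\bigl(\prod_{j=i+1}^{n}{\bf B}_{j}\bigr)$, followed by the triangle inequality and submultiplicativity, is the standard argument for this bound, and your handling of the boundary cases $i=0$ and $i=n$ via the stated empty-product conventions is exactly what is needed.
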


\begin{lem}
\textbf{\textup{(Stability of Observations \cite{KalPetNonlinear_2015})}}
\label{Lemma_WHP} Consider the random quadratic form
\begin{align}
\hspace{-2pt}\hspace{-2pt}\hspace{-2pt}Q_{t}\hspace{-2pt}\left(\omega\right) & \hspace{-2pt}\triangleq\hspace{-2pt}\left\Vert {\bf y}_{t}\hspace{-2pt}\left(\omega\right)\right\Vert _{2}^{2}\hspace{-2pt}\equiv\hspace{-2pt}\left\Vert \overline{{\bf y}}_{t}\hspace{-2pt}\left(X_{t}\left(\omega\right)\right)\hspace{-2pt}+\hspace{-2pt}\boldsymbol{\mu}_{t}\hspace{-2pt}\left(X_{t}\left(\omega\right)\right)\right\Vert _{2}^{2}\hspace{-2pt},\,t\in\mathbb{N}.
\end{align}
Then, for any fixed $t\in\mathbb{N}$ and any freely chosen $C\ge1$,
there exists a bounded constant $\beta>1$, such that the measurable
set
\begin{equation}
{\cal T}_{t}\triangleq\left\{ \omega\in\Omega\left|\sup_{i\in\mathbb{N}_{t}}Q_{i}\left(\omega\right)<\beta CN\left(1+\log\left(t+1\right)\right)\right.\right\} 
\end{equation}
satisfies
\begin{equation}
{\cal P}\left({\cal T}_{t}\right)\ge1-\left(t+1\right)^{1-CN}\exp\left(-CN\right),
\end{equation}
that is, the sequence of quadratic forms $\left\{ Q_{i}\left(\omega\right)\right\} _{i\in\mathbb{N}_{t}}$
is uniformly bounded with overwhelmingly high probability, under the
probability measure ${\cal P}$.
\end{lem}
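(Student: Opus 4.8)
The plan is to control each quadratic form $Q_i$ by a single, $X_i$-independent chi-squared tail bound, and then to pay a union bound over the $t+1$ time indices $i\in\mathbb{N}_t$. The key structural observation is that the logarithmic growth of the threshold in $t$ is exactly what is needed to absorb this union-bound multiplicity and still leave the stated $(t+1)^{1-CN}$ decay; everything else is a reduction of $Q_i$ to a standard Gaussian quadratic form via the Boundedness Assumption.

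First I would reduce $Q_i$ to a chi-squared variable. Conditionally on $X_i$, the centered observation $\overline{\bf y}_i(X_i)$ is zero-mean Gaussian with covariance $\boldsymbol{\Sigma}_i(X_i)$, so it admits the representation $\overline{\bf y}_i(X_i)\equiv\boldsymbol{\Sigma}_i^{1/2}(X_i){\bf z}_i$ with ${\bf z}_i\sim\mathcal{N}({\bf 0},{\bf I})$ independent of $X_i$. Using $\|a+b\|_2^2\le 2\|a\|_2^2+2\|b\|_2^2$ together with the Boundedness Assumption ($\lambda_{max}(\boldsymbol{\Sigma}_i(\boldsymbol{x}))\le\lambda_{sup}$ and $\|\boldsymbol{\mu}_i(\boldsymbol{x})\|_2\le\mu_{sup}$, both uniform in $i$ and $\boldsymbol{x}$), one gets the almost-sure domination
\[
Q_i \le 2\|\overline{\bf y}_i(X_i)\|_2^2+2\mu_{sup}^2 \le 2\lambda_{sup}W_i+2\mu_{sup}^2,
\]
where $W_i\triangleq\|{\bf z}_i\|_2^2\sim\chi^2_N$. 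The crucial point is that the distribution of the dominating variable $W_i$ does not depend on the value of $X_i$, so any tail bound for $\chi^2_N$ transfers to $Q_i$ after taking the conditional expectation, i.e. unconditionally.

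Next I would invoke the standard Chernoff estimate $\mathcal{P}(W_i\ge u)\le\exp(-\lambda u)(1-2\lambda)^{-N/2}$ for $\lambda\in(0,1/2)$; fixing $\lambda=1/4$ gives $\mathcal{P}(W_i\ge u)\le\exp(-u/4+(N/2)\log 2)$. Writing $s\triangleq CN(1+\log(t+1))$, note $s\ge N\ge 1$ since $C\ge1$ and $t\ge0$. I would then choose $\beta$ a universal constant depending only on $\lambda_{sup},\mu_{sup}$ — any $\beta\ge 12\lambda_{sup}+2\mu_{sup}^2$ works, and $\beta>1$ because $\lambda_{sup}\ge\lambda_{inf}>1$ — so that the event $\{Q_i\ge\beta s\}$ forces $W_i\ge(\beta s-2\mu_{sup}^2)/(2\lambda_{sup})\ge 4s+2N\log 2$. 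Substituting this into the Chernoff bound yields, for each fixed $i\in\mathbb{N}_t$,
\[
\mathcal{P}\bigl(Q_i\ge\beta CN(1+\log(t+1))\bigr)\le e^{-s}=e^{-CN}(t+1)^{-CN}.
\]

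Finally I would union bound over the finitely many indices. Since $\sup_{i\in\mathbb{N}_t}Q_i\ge u$ exactly when $Q_i\ge u$ for some $i$, and there are $t+1$ indices,
\[
\mathcal{P}(\mathcal{T}_t^c)\le\sum_{i=0}^{t}\mathcal{P}\bigl(Q_i\ge\beta CN(1+\log(t+1))\bigr)\le(t+1)\,e^{-CN}(t+1)^{-CN}=(t+1)^{1-CN}e^{-CN},
\]
which is the claim after complementation. I expect the only delicate part to be the bookkeeping of constants in the third paragraph: the threshold must emerge proportional to $CN(1+\log(t+1))$ so that the per-index tail is precisely $e^{-CN}(t+1)^{-CN}$, and it is the $\log(t+1)$ term that supplies the extra factor $(t+1)^{-CN}$ needed to beat the union-bound multiplicity $t+1$. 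No sharp concentration is required — a crude Chernoff estimate with a fixed $\lambda$ suffices — which keeps $\beta$ a bounded universal constant independent of $t$, $C$, and the internal HMM structure.
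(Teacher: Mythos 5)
The paper offers no proof of this lemma to compare against: it is imported verbatim from \cite{KalPetNonlinear_2015}, and none of the appendices contains an argument for it. Your proposal is a correct, self-contained derivation, and it is the natural one. The three steps all check out. (i) The whitening $\overline{{\bf y}}_i(X_i)\equiv\boldsymbol{\Sigma}_i^{1/2}(X_i){\bf z}_i$ is legitimate because $\boldsymbol{\Sigma}_i(\boldsymbol{x})\succ{\bf 0}$, and the conditional law of ${\bf z}_i$ given $X_i$ is ${\cal N}({\bf 0},{\bf I})$ for every value of $X_i$, so the dominating $\chi_N^2$ tail bound survives averaging over the chain; together with $\|a+b\|_2^2\le 2\|a\|_2^2+2\|b\|_2^2$ and the Boundedness Assumption this gives $Q_i\le 2\lambda_{sup}W_i+2\mu_{sup}^2$ almost surely. (ii) The constant bookkeeping is right: with $s\triangleq CN(1+\log(t+1))\ge N$ and $\beta\ge 12\lambda_{sup}+2\mu_{sup}^2$, the event $\{Q_i\ge\beta s\}$ forces $W_i\ge(\beta s-2\mu_{sup}^2)/(2\lambda_{sup})\ge 6s\ge 4s+2N\log 2$, and the $\lambda=1/4$ Chernoff bound then yields exactly $e^{-s}=e^{-CN}(t+1)^{-CN}$ per index; moreover $\beta\ge 12\lambda_{sup}\ge 12\lambda_{inf}\ge 12e>1$ and $\beta$ is finite by the assumed bounds, as required. (iii) The union bound over the $t+1$ indices of the finite set $\mathbb{N}_t$ (where $\sup$ is a $\max$) gives ${\cal P}({\cal T}_t^c)\le(t+1)^{1-CN}e^{-CN}$. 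Your closing observation correctly identifies the mechanism: the $\log(t+1)$ inflation of the threshold is precisely what converts the per-index tail into $(t+1)^{-CN}$ and absorbs the union-bound multiplicity, which is also why the same event ${\cal T}_T$ can later support the uniform-in-$t$ statements of Lemmas \ref{Lemma_GROWTH} and \ref{Lemma_ADMM_3}.
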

The next result constitutes a corollary of Theorem \ref{Lemma_ADMM_1}.
Here, for our purposes, we state it as a preliminary lemma.
\begin{lem}
\textbf{\textup{(Rate of each Iterate)}} \label{Lemma_ADMM_2} For
each $t\in\mathbb{N}_{T}^{+}$ and an arbitrary $j\in\mathbb{N}_{L}^{+}$,
it is true that, for all $n\in\mathbb{N}^{\left\lfloor 2\epsilon_{max}+2\right\rfloor }$,
\begin{equation}
\left|\boldsymbol{\vartheta}_{t}^{j}\left(n,k\right)-\theta_{t}\left(\boldsymbol{x}_{j}\right)\right|\le\gamma\left\Vert \boldsymbol{\theta}_{t}\left(\boldsymbol{x}_{j}\right)\right\Vert _{2}n\rho^{n},
\end{equation}
for all $k\in\mathbb{N}_{S}^{+}$.
\end{lem}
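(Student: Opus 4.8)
The plan is to read the claimed pointwise estimate directly off the $\ell_{2}$ consensus error bound already available, observing that the scalar $\theta_{t}\left(\boldsymbol{x}_{j}\right)$ on the left is simply one component of the limit vector. First I would recall from Theorem \ref{Lemma_ADMM_1} that $\boldsymbol{\vartheta}_{t}^{j}\left(\infty\right)=\theta_{t}\left(\boldsymbol{x}_{j}\right){\bf 1}_{S}$, so that for every sensor index $k\in\mathbb{N}_{S}^{+}$ the $k$-th entry of the limit equals $\theta_{t}\left(\boldsymbol{x}_{j}\right)$. Consequently, the scalar deviation $\boldsymbol{\vartheta}_{t}^{j}\left(n,k\right)-\theta_{t}\left(\boldsymbol{x}_{j}\right)$ is exactly the $k$-th coordinate of the error vector $\boldsymbol{\vartheta}_{t}^{j}\left(n\right)-\boldsymbol{\vartheta}_{t}^{j}\left(\infty\right)\in\mathbb{R}^{S\times1}$.

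Next I would invoke the elementary coordinate bound $\left|v\left(k\right)\right|\le\left\Vert \boldsymbol{v}\right\Vert _{2}$, valid for every $\boldsymbol{v}\in\mathbb{R}^{S\times1}$ and uniformly over $k\in\mathbb{N}_{S}^{+}$, since each coordinate is dominated by the Euclidean norm of the whole vector. Applying this with $\boldsymbol{v}\equiv\boldsymbol{\vartheta}_{t}^{j}\left(n\right)-\boldsymbol{\vartheta}_{t}^{j}\left(\infty\right)$ immediately yields
\[
\left|\boldsymbol{\vartheta}_{t}^{j}\left(n,k\right)-\theta_{t}\left(\boldsymbol{x}_{j}\right)\right|\le\left\Vert \boldsymbol{\vartheta}_{t}^{j}\left(n\right)-\boldsymbol{\vartheta}_{t}^{j}\left(\infty\right)\right\Vert _{2},
\]
where the right-hand side is now independent of $k$.

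It then remains only to upper bound this $\ell_{2}$ norm, for which I would appeal to the optimal consensus error bound of Theorem \ref{Optimal_Rates}. Under the notational conventions fixed at the beginning of Section \ref{sec:MAIN_RESULTS}, namely $\gamma\left(\lambda_{{\bf S}}^{2}\right)\equiv\gamma$ and $\rho^{*}\left(\lambda_{{\bf S}}^{2}\right)\equiv\rho$, that bound reads $\left\Vert \boldsymbol{\vartheta}_{t}^{j}\left(n\right)-\boldsymbol{\vartheta}_{t}^{j}\left(\infty\right)\right\Vert _{2}\le\gamma\left\Vert \boldsymbol{\theta}_{t}\left(\boldsymbol{x}_{j}\right)\right\Vert _{2}n\rho^{n}$. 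Chaining this with the coordinate estimate above reproduces precisely the asserted inequality, and because the intermediate step has already discarded all dependence on $k$, the resulting estimate holds uniformly over every $k\in\mathbb{N}_{S}^{+}$.

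The only point demanding any care, and the closest thing to an obstacle in this otherwise immediate argument, is matching the admissible range of $n$. Theorem \ref{Optimal_Rates} is valid for $n>2\epsilon_{max}+1$, whereas the present lemma is stated for $n\in\mathbb{N}^{\left\lfloor 2\epsilon_{max}+2\right\rfloor}$. I would therefore verify the bookkeeping inequality $\left\lfloor 2\epsilon_{max}+2\right\rfloor=\left\lfloor 2\epsilon_{max}\right\rfloor+2>2\epsilon_{max}+1$, which holds because the fractional part of $2\epsilon_{max}$ is strictly less than unity. Hence every $n$ in the lemma's index set satisfies the hypothesis of Theorem \ref{Optimal_Rates}, and the proof is complete.
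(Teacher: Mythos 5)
Your proof is correct and follows essentially the same route as the paper's: the paper likewise observes that $\left|\boldsymbol{\vartheta}_{t}^{j}\left(n,k\right)-\theta_{t}\left(\boldsymbol{x}_{j}\right)\right|^{2}$ is one summand of $\left\Vert \boldsymbol{\vartheta}_{t}^{j}\left(n\right)-\boldsymbol{\vartheta}_{t}^{j}\left(\infty\right)\right\Vert _{2}^{2}$ and then applies the (optimized) consensus error bound. Your explicit check that every $n\in\mathbb{N}^{\left\lfloor 2\epsilon_{max}+2\right\rfloor }$ satisfies $n>2\epsilon_{max}+1$, and your correct attribution of the bound to Theorem \ref{Optimal_Rates} rather than Theorem \ref{Lemma_ADMM_1}, are minor tidiness improvements over the paper's version.
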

\begin{proof}[Proof of Lemma \ref{Lemma_ADMM_2}]
See Appendix C.
\end{proof}
Employing Lemma \ref{Lemma_WHP}, we can characterize the growth of
the the initial vector $\boldsymbol{\theta}_{t}\left(\boldsymbol{x}_{j}\right)$,
for $t\in\mathbb{N}_{T}$, as follows.
\begin{lem}
\textbf{\textup{(Growth of Initial Values)}} \label{Lemma_GROWTH}
There exists a $\delta>1$, such that 
\begin{flalign}
\sup_{t\in\mathbb{N}_{T}^{+}}\sup_{j\in\mathbb{N}_{L}^{+}}\left\Vert \boldsymbol{\theta}_{t}\left(\boldsymbol{x}_{j}\right)\right\Vert _{2} & \le\delta S^{1.5}CN\left(1+\log\left(T+1\right)\right),
\end{flalign}
with probability at least $1-\left(T+1\right)^{1-CN}\exp\left(-CN\right)$,
for any free $C\ge1$.
\end{lem}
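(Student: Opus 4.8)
The plan is to reduce the whole statement to the single high-probability event furnished by Lemma \ref{Lemma_WHP} and then to bound $\boldsymbol{\theta}_{t}\left(\boldsymbol{x}_{j}\right)$ deterministically on that event. First I would invoke Lemma \ref{Lemma_WHP} with the time index set to $T$, working on the set ${\cal T}_{T}$ on which $\sup_{i\in\mathbb{N}_{T}}Q_{i}\left(\omega\right)<\beta CN\left(1+\log\left(T+1\right)\right)$; this event already carries exactly the probability $1-\left(T+1\right)^{1-CN}\exp\left(-CN\right)$ claimed, so no further probabilistic argument is needed, and it simultaneously controls every $\left\Vert {\bf y}_{t}\right\Vert _{2}^{2}\equiv Q_{t}$ for $t\in\mathbb{N}_{T}^{+}$.

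Next I would bound each scalar entry $\theta_{t}^{i}\left(\boldsymbol{x}_{j}\right)$ deterministically. For the quadratic part, the Boundedness Assumption gives $\lambda_{min}\left(\boldsymbol{\Sigma}_{t}^{i}\left(\boldsymbol{x}_{j}\right)\right)\ge\lambda_{inf}\ge e>1$, hence $\left(\boldsymbol{\Sigma}_{t}^{i}\left(\boldsymbol{x}_{j}\right)\right)^{-1}\preceq\lambda_{inf}^{-1}{\bf I}\preceq{\bf I}$, so the form is at most $\left\Vert {\bf \overline{y}}_{t}^{i}\left(\boldsymbol{x}_{j}\right)\right\Vert _{2}^{2}\le2\left\Vert {\bf y}_{t}^{i}\right\Vert _{2}^{2}+2\mu_{sup}^{2}\le2Q_{t}+2\mu_{sup}^{2}$, using $\left\Vert {\bf y}_{t}^{i}\right\Vert _{2}^{2}\le Q_{t}$ and $\left\Vert \boldsymbol{\mu}_{t}^{i}\left(\boldsymbol{x}_{j}\right)\right\Vert _{2}\le\mu_{sup}$. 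For the log-determinant part, all eigenvalues of the block $\boldsymbol{\Sigma}_{t}^{i}\left(\boldsymbol{x}_{j}\right)$ lie in $\left[\lambda_{inf},\lambda_{sup}\right]$, whence $\left|\log\det\boldsymbol{\Sigma}_{t}^{i}\left(\boldsymbol{x}_{j}\right)\right|\le N_{i}\log\lambda_{sup}\le N\log\lambda_{sup}$. Multiplying through by the explicit factor $S$ appearing in the definition of $\theta_{t}^{i}$ yields a bound, uniform over $t\in\mathbb{N}_{T}^{+}$ and $j\in\mathbb{N}_{L}^{+}$, of the form $\left|\theta_{t}^{i}\left(\boldsymbol{x}_{j}\right)\right|\le S\left(2Q_{t}+2\mu_{sup}^{2}+N\log\lambda_{sup}\right)$.

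The scaling in $S$ is then produced by passing from the componentwise estimate to the Euclidean norm via $\left\Vert \boldsymbol{\theta}_{t}\left(\boldsymbol{x}_{j}\right)\right\Vert _{2}\le\sqrt{S}\max_{i}\left|\theta_{t}^{i}\left(\boldsymbol{x}_{j}\right)\right|$; the extra $\sqrt{S}$ multiplied by the explicit $S$ already present in $\theta_{t}^{i}$ is precisely what generates the $S^{1.5}$ factor in the statement. Substituting the ${\cal T}_{T}$-bound $Q_{t}<\beta CN\left(1+\log\left(T+1\right)\right)$, I obtain $\left\Vert \boldsymbol{\theta}_{t}\left(\boldsymbol{x}_{j}\right)\right\Vert _{2}\le S^{1.5}\bigl(2\beta CN\left(1+\log\left(T+1\right)\right)+2\mu_{sup}^{2}+N\log\lambda_{sup}\bigr)$ on ${\cal T}_{T}$.

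It only remains to absorb the lower-order terms into the leading one. Since $C\ge1$, $N\ge1$ and $1+\log\left(T+1\right)\ge1$, we have $CN\left(1+\log\left(T+1\right)\right)\ge\max\left\{ 1,N\right\} $, so both $2\mu_{sup}^{2}$ and $N\log\lambda_{sup}$ are dominated by constant multiples of $CN\left(1+\log\left(T+1\right)\right)$; collecting $2\beta$, $2\mu_{sup}^{2}$ and $\log\lambda_{sup}$ into a single constant $\delta>1$ (admissible because $\beta>1$ already forces $2\beta>1$) gives the claimed bound, uniformly in $t$ and $j$, on the event ${\cal T}_{T}$. The only step requiring genuine care is the bookkeeping of the $S$-dependence: one must notice that the stated $S^{1.5}$ arises from combining the explicit $S$ in the definition of $\theta_{t}^{i}$ with the $\sqrt{S}$ from the $\ell_{\infty}$-to-$\ell_{2}$ conversion, rather than from any property of the observation model itself; everything else is a routine application of the Boundedness Assumption together with Lemma \ref{Lemma_WHP}.
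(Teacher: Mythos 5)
Your proposal is correct and follows essentially the same route as the paper's Appendix D: condition on the event $\mathcal{T}_T$ from Lemma \ref{Lemma_WHP}, bound each $\theta_t^k(\boldsymbol{x}_j)$ componentwise via the Boundedness Assumption (quadratic form plus log-determinant), obtain the $S^{1.5}$ factor from the explicit $S$ in $\theta_t^k$ combined with the $\sqrt{S}$ from passing to the $\ell_2$-norm, and absorb the lower-order terms into a single constant $\delta>1$ using $CN(1+\log(T+1))\ge\max\{1,N\}$. The only differences are cosmetic (you split $(a+b)^2\le 2a^2+2b^2$ where the paper factors out $\sqrt{\beta CN(1+\log(T+1))}\ge 1$, and you use the correct $\lambda_{inf}^{-1}$ bound on the quadratic form where the paper writes the looser $\lambda_{inf}$), and neither changes the argument.
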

\begin{proof}[Proof of Lemma \ref{Lemma_GROWTH}]
See Appendix D.
\end{proof}
Let us now present another lemma, providing a probabilistic uniform
lower bound concerning the iterates $\boldsymbol{\vartheta}_{t}^{j}\left(n,k\right)$,
under appropriate conditions on the number of iterations, $n$.
\begin{lem}
\textbf{\textup{(A lower bound on each Iterate)}} \label{Lemma_ADMM_3}
It is true that, as long as $n\in\mathbb{N}^{\left\lfloor 2\epsilon_{max}+2\right\rfloor }$
and
\begin{equation}
n-\tau\log\left(\gamma n\right)\hspace{-2pt}\ge\hspace{-2pt}\tau\log\left(\dfrac{2\delta S^{1.5}C\left(1+\log\left(T+1\right)\right)}{\log\left(\lambda_{inf}\right)}\right),\label{eq:Lower_Bound_1}
\end{equation}
the iterates generated by the ADMM are lower bounded as
\begin{equation}
\inf_{t\in\mathbb{N}_{T}^{+}}\inf_{j\in\mathbb{N}_{L}^{+}}\inf_{k\in\mathbb{N}_{S}^{+}}\boldsymbol{\vartheta}_{t}^{j}\left(n,k\right)\hspace{-2pt}\ge\hspace{-2pt}\dfrac{N}{2}\log\left(\lambda_{inf}\right),
\end{equation}
with probability at least $1-\left(T+1\right)^{1-CN}\exp\left(-CN\right)$.
\end{lem}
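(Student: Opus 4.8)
The plan is to combine the two-sided iterate estimate of Lemma~\ref{Lemma_ADMM_2} with a purely deterministic lower bound on the consensus target $\theta_{t}\left(\boldsymbol{x}_{j}\right)$ and the probabilistic growth control of Lemma~\ref{Lemma_GROWTH}, and then to recognize that the hypothesis~\eqref{eq:Lower_Bound_1} is exactly the logarithmic reformulation of the numerical inequality forcing the error term below half the target. First, for every $n\in\mathbb{N}^{\left\lfloor 2\epsilon_{max}+2\right\rfloor}$, $t\in\mathbb{N}_{T}^{+}$, $j\in\mathbb{N}_{L}^{+}$ and $k\in\mathbb{N}_{S}^{+}$, I would read Lemma~\ref{Lemma_ADMM_2} in its one-sided form
\begin{equation*}
\boldsymbol{\vartheta}_{t}^{j}\left(n,k\right)\ge\theta_{t}\left(\boldsymbol{x}_{j}\right)-\gamma\left\Vert \boldsymbol{\theta}_{t}\left(\boldsymbol{x}_{j}\right)\right\Vert _{2}n\rho^{n}.
\end{equation*}

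Next, I would lower bound the scalar average $\theta_{t}\left(\boldsymbol{x}_{j}\right)$ deterministically and uniformly. From its definition~\eqref{eq:CONSENSUS_1}, $\theta_{t}\left(\boldsymbol{x}_{j}\right)$ is the sum of the quadratic form ${\bf \overline{y}}_{t}^{\boldsymbol{T}}\left(\boldsymbol{x}_{j}\right)\boldsymbol{\Sigma}_{t}^{-1}\left(\boldsymbol{x}_{j}\right){\bf \overline{y}}_{t}\left(\boldsymbol{x}_{j}\right)\ge0$, nonnegative because $\boldsymbol{\Sigma}_{t}\left(\boldsymbol{x}_{j}\right)\succ{\bf 0}$, and the log-determinant $\log\det\left(\boldsymbol{\Sigma}_{t}\left(\boldsymbol{x}_{j}\right)\right)$. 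Since each of the $N$ eigenvalues of $\boldsymbol{\Sigma}_{t}\left(\boldsymbol{x}_{j}\right)$ is at least $\lambda_{inf}$, we have $\det\left(\boldsymbol{\Sigma}_{t}\left(\boldsymbol{x}_{j}\right)\right)\ge\lambda_{inf}^{N}$, hence $\log\det\left(\boldsymbol{\Sigma}_{t}\left(\boldsymbol{x}_{j}\right)\right)\ge N\log\left(\lambda_{inf}\right)$. Discarding the nonnegative quadratic term then yields $\theta_{t}\left(\boldsymbol{x}_{j}\right)\ge N\log\left(\lambda_{inf}\right)$, which is strictly positive since $\lambda_{inf}\ge e>1$.

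Third, on the high-probability event of Lemma~\ref{Lemma_GROWTH} I would upper bound $\left\Vert \boldsymbol{\theta}_{t}\left(\boldsymbol{x}_{j}\right)\right\Vert _{2}$ by $\delta S^{1.5}CN\left(1+\log\left(T+1\right)\right)$, uniformly over $t$ and $j$, so that on that event
\begin{equation*}
\inf_{t,j,k}\boldsymbol{\vartheta}_{t}^{j}\left(n,k\right)\ge N\log\left(\lambda_{inf}\right)-\gamma\,\delta S^{1.5}CN\left(1+\log\left(T+1\right)\right)n\rho^{n}.
\end{equation*}
The target $\tfrac{N}{2}\log\left(\lambda_{inf}\right)$ is therefore attained as soon as the subtracted term does not exceed $\tfrac{N}{2}\log\left(\lambda_{inf}\right)$, that is, after cancelling the common factor $N>0$, as soon as $\gamma n\rho^{n}\le\log\left(\lambda_{inf}\right)/\bigl(2\delta S^{1.5}C\left(1+\log\left(T+1\right)\right)\bigr)$.

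Finally, I would take logarithms of this last scalar inequality and use $\log\rho=-1/\tau$ (recall $\tau=1/\log\left(1/\rho\right)$), obtaining $\log\left(\gamma n\right)-n/\tau\le\log\bigl(\log\left(\lambda_{inf}\right)/\bigl(2\delta S^{1.5}C\left(1+\log\left(T+1\right)\right)\bigr)\bigr)$; rearranging and inverting the argument of the logarithm reproduces precisely~\eqref{eq:Lower_Bound_1}. Since the only probabilistic ingredient is Lemma~\ref{Lemma_GROWTH}, the asserted lower bound holds on the very same event, hence with probability at least $1-\left(T+1\right)^{1-CN}\exp\left(-CN\right)$. The argument is essentially mechanical; the one point deserving care is to keep $\log\left(\lambda_{inf}\right)>0$ in play---guaranteed by the boundedness assumption $\lambda_{inf}\ge e>1$---so that the logarithms are well defined and the monotone log-transformation preserves the direction of the inequality, turning the quantitative requirement on $\gamma n\rho^{n}$ into the stated hypothesis.
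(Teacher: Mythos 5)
Your proposal is correct and follows essentially the same route as the paper's Appendix E: apply Lemma \ref{Lemma_ADMM_2} one-sidedly, lower bound $\theta_{t}\left(\boldsymbol{x}_{j}\right)$ by $N\log\left(\lambda_{inf}\right)$, upper bound $\left\Vert \boldsymbol{\theta}_{t}\left(\boldsymbol{x}_{j}\right)\right\Vert _{2}$ via Lemma \ref{Lemma_GROWTH}, and demand the error term stay below $\tfrac{N}{2}\log\left(\lambda_{inf}\right)$, which after taking logarithms is exactly \eqref{eq:Lower_Bound_1}. The only difference is that you spell out the deterministic bound $\theta_{t}\left(\boldsymbol{x}_{j}\right)\ge N\log\left(\lambda_{inf}\right)$ (nonnegativity of the quadratic form plus the eigenvalue bound on the log-determinant), which the paper invokes implicitly with the phrase ``by definition of $\theta_{t}\left(\boldsymbol{x}_{j}\right)$''.
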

\begin{proof}[Proof of Lemma \ref{Lemma_ADMM_3}]
See Appendix E.
\end{proof}
Leveraging the results presented above, we may now provide a probabilistic
bound on the required number of ADMM iterations, such that the $\ell_{1}$
norm between the \textbf{\textit{unnormalized}} filtering estimates
$\boldsymbol{E}_{t}$ and $\widetilde{\boldsymbol{E}}_{t}^{k}\left(n\right)$
is small.
\begin{thm}
\textbf{\textup{(Stability of Unnormalized Distributed Estimates)}}
\label{Theorem_Filtering_ADMM} Fix $T<\infty$ and choose any global
accuracy level $0<\varepsilon<1$. Then, there exists $\eta>1$ such
that, as long as $n\in\mathbb{N}^{\left\lfloor 2\epsilon_{max}+2\right\rfloor }$
and
\begin{equation}
n-\tau\log\left(\gamma n\right)\ge\tau\log\left(\dfrac{\eta S^{1.5}C\left(1+\log\left(T+1\right)\right)}{\varepsilon}\right),\label{eq:CONDITION}
\end{equation}
the absolute error between $\boldsymbol{E}_{t}$ and $\widetilde{\boldsymbol{E}}_{t}^{k}\left(n\right)$
satisfies
\begin{equation}
\sup_{t\in\mathbb{N}_{T}}\sup_{k\in\mathbb{N}_{S}^{+}}\left\Vert \boldsymbol{E}_{t}-\widetilde{\boldsymbol{E}}_{t}^{k}\left(n\right)\right\Vert _{1}\le\varepsilon,
\end{equation}
with probability at least $1-\left(T+1\right)^{1-CN}\exp\left(-CN\right)$.
That is, provided \eqref{eq:CONDITION} holds locally at each sensor
$k\in\mathbb{N}_{S}^{+}$, at each $t\in\mathbb{N}_{T}$, $\boldsymbol{E}_{t}$
equals $\widetilde{\boldsymbol{E}}_{t}^{k}\left(n\right)$ \textbf{within}
$\varepsilon$, with overwhelmingly high probability.
\end{thm}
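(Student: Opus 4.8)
The plan is to express both the centralized and the distributed unnormalized estimates as ordered products of matrices acting on the common initial vector $\pi_{-1}$, namely $\boldsymbol{E}_{t}\equiv\left(\prod_{i=0}^{t}\boldsymbol{\Lambda}_{i}\boldsymbol{P}\right)\pi_{-1}$ and $\widetilde{\boldsymbol{E}}_{t}^{k}(n)\equiv\left(\prod_{i=0}^{t}\widetilde{\boldsymbol{\Lambda}}_{i}^{k}(n)\boldsymbol{P}\right)\pi_{-1}$, and then to invoke the telescoping inequality of Lemma \ref{Elementary_Lemma_2} with the $\ell_{1}$-induced operator norm, setting $\boldsymbol{A}_{i}\equiv\boldsymbol{\Lambda}_{i}\boldsymbol{P}$ and $\boldsymbol{B}_{i}\equiv\widetilde{\boldsymbol{\Lambda}}_{i}^{k}(n)\boldsymbol{P}$. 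Since $\left\Vert \pi_{-1}\right\Vert_{1}=1$, the vector error is controlled by the matrix-product error. Everything below would be carried out deterministically on the event ${\cal T}_{T}$ of Lemma \ref{Lemma_WHP}, on which the conclusions of Lemmata \ref{Lemma_GROWTH} and \ref{Lemma_ADMM_3} simultaneously hold; the stated probability $1-(T+1)^{1-CN}\exp(-CN)$ is then inherited directly.

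Three ingredients feed the telescoping sum. First, for the per-step discrepancy $\left\Vert \boldsymbol{A}_{i}-\boldsymbol{B}_{i}\right\Vert_{1}$, since $\boldsymbol{P}$ is column-stochastic we have $\left\Vert \boldsymbol{P}\right\Vert_{1}=1$, so this reduces to the largest diagonal gap $\max_{j}\left|\mathsf{L}_{i}(\boldsymbol{x}_{j})-\widetilde{\mathsf{L}}_{i}^{n,k}(\boldsymbol{x}_{j})\right|$. Writing each likelihood as $\exp(-\cdot/2)$ and applying the mean value theorem, this gap is at most $\tfrac{1}{2}e^{-\xi/2}\left|\theta_{i}(\boldsymbol{x}_{j})-\boldsymbol{\vartheta}_{i}^{j}(n,k)\right|$ with $\xi$ between the two exponents; the lower bound of Lemma \ref{Lemma_ADMM_3} forces $\xi\ge\tfrac{N}{2}\log\lambda_{inf}$, giving the damping factor $e^{-\xi/2}\le\lambda_{inf}^{-N/4}$, while Lemma \ref{Lemma_ADMM_2} bounds the exponent error by $\gamma\left\Vert \boldsymbol{\theta}_{i}(\boldsymbol{x}_{j})\right\Vert_{2}n\rho^{n}$. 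Second, and crucially, the norm products do not merely stay below one but decay geometrically: the boundedness assumption $\lambda_{inf}\ge e>1$ yields $\left\Vert \boldsymbol{\Lambda}_{i}\right\Vert_{1}\le\lambda_{inf}^{-N/2}$ and $\left\Vert \widetilde{\boldsymbol{\Lambda}}_{i}^{k}(n)\right\Vert_{1}\le\lambda_{inf}^{-N/4}$, both strictly below unity, so $\prod_{j=0}^{i-1}\left\Vert \boldsymbol{A}_{j}\right\Vert_{1}$ and $\prod_{j=i+1}^{t}\left\Vert \boldsymbol{B}_{j}\right\Vert_{1}$ shrink geometrically in $i$ and $t-i$. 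Third, Lemma \ref{Lemma_GROWTH} supplies the uniform growth bound $\sup_{i,j}\left\Vert \boldsymbol{\theta}_{i}(\boldsymbol{x}_{j})\right\Vert_{2}\le\delta S^{1.5}CN(1+\log(T+1))$.

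Combining these, the geometric decay of the norm products collapses the telescoping sum $\sum_{i=0}^{t}(\cdots)$ into a convergent geometric series whose value is bounded by an absolute constant, \emph{independent of} $T$; this is precisely the step that replaces the naive factor $(T+1)$ by an $O(1)$ factor and is the reason the final condition depends only loglinearly on $T$. After this collapse, the residual factor $N\lambda_{inf}^{-N/4}$ is itself uniformly bounded (as $x\mapsto xe^{-x/4}$ is bounded on $x\ge0$), so it too may be absorbed into a constant, leaving a bound of the form $\sup_{t\le T,k}\left\Vert \boldsymbol{E}_{t}-\widetilde{\boldsymbol{E}}_{t}^{k}(n)\right\Vert_{1}\le C_{0}\gamma S^{1.5}C(1+\log(T+1))n\rho^{n}$. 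Requiring the right-hand side to be at most $\varepsilon$, taking logarithms, and using $\rho^{n}=e^{-n/\tau}$ rearranges exactly into the stated condition \eqref{eq:CONDITION} upon setting $\eta$ equal to the accumulated constant; I would finally enlarge $\eta$ if necessary so that \eqref{eq:CONDITION} also implies the hypothesis of Lemma \ref{Lemma_ADMM_3} (possible because $\varepsilon<1$ and $\eta$ is free), thereby legitimizing the use of that lemma within the argument. The main obstacle is the second ingredient: recognizing and exploiting the strict geometric contraction of the likelihood-matrix norms enabled by $\lambda_{inf}\ge e$, without which the bound would scale linearly rather than logarithmically in $T$.
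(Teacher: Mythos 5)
Your proposal is correct and follows essentially the same route as the paper's own proof: the product representation plus the telescoping bound of Lemma \ref{Elementary_Lemma_2}, the norm bounds $\left\Vert \boldsymbol{\Lambda}_{i}\right\Vert_{1}\le\lambda_{inf}^{-N/2}$ and $\left\Vert \widetilde{\boldsymbol{\Lambda}}_{i}^{k}(n)\right\Vert_{1}\le\lambda_{inf}^{-N/4}$ obtained from Lemma \ref{Lemma_ADMM_3}, the per-step discrepancy controlled via Lemmata \ref{Lemma_ADMM_2} and \ref{Lemma_GROWTH}, the collapse of the sum to an $O(1)$ factor using the boundedness of $x\mapsto xe^{-x/4}$, and the final enlargement of $\eta$ so that \eqref{eq:CONDITION} also subsumes the hypothesis \eqref{eq:Lower_Bound_1} of Lemma \ref{Lemma_ADMM_3}. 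The only cosmetic difference is that you retain the mean-value damping factor $e^{-\xi/2}$ in the likelihood gap, whereas the paper simply uses $\left|e^{-a/2}-e^{-b/2}\right|\le\left|a-b\right|$ for nonnegative exponents.
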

\begin{proof}[Proof of Theorem \ref{Theorem_Filtering_ADMM}]
First, we know that $\boldsymbol{E}_{t}\equiv\boldsymbol{\Lambda}_{t}\boldsymbol{P}\boldsymbol{E}_{t-1}$,
for all $t\in\mathbb{N}$, where $\boldsymbol{E}_{-1}\equiv\pi_{-1}$.
Therefore, simple induction shows that
\begin{equation}
\boldsymbol{E}_{t}=\left(\prod_{i\in\mathbb{N}_{t}}\left(\boldsymbol{\Lambda}_{t-i}\boldsymbol{P}\right)\right)\boldsymbol{E}_{-1},\quad\forall t\in\mathbb{N}.
\end{equation}
Likewise, by construction of our distributed state estimator,
\begin{equation}
\widetilde{\boldsymbol{E}}_{t}^{k}\left(n\right)=\left(\prod_{i\in\mathbb{N}_{t}}\left(\widetilde{\boldsymbol{\Lambda}}_{t-i}^{k}\left(n\right)\boldsymbol{P}\right)\right)\widetilde{\boldsymbol{E}}_{-1}^{k},\quad\forall t\in\mathbb{N}
\end{equation}
and for all $k\in\mathbb{N}_{S}^{+}$, where $\widetilde{\boldsymbol{E}}_{-1}^{k}\equiv\boldsymbol{E}_{-1}$,
identically. Taking the difference between $\boldsymbol{E}_{t}$ and
$\widetilde{\boldsymbol{E}}_{t}^{k}\left(n\right)$, we can write
\begin{align}
\boldsymbol{E}_{t}\hspace{-2pt}-\hspace{-2pt}\widetilde{\boldsymbol{E}}_{t}^{k}\hspace{-2pt}\left(n\right) & \hspace{-2pt}=\hspace{-2pt}\hspace{-2pt}\left(\prod_{i\in\mathbb{N}_{t}}\hspace{-2pt}\left(\boldsymbol{\Lambda}_{t-i}\boldsymbol{P}\right)\hspace{-2pt}-\hspace{-2pt}\hspace{-2pt}\prod_{i\in\mathbb{N}_{t}}\hspace{-2pt}\left(\widetilde{\boldsymbol{\Lambda}}_{t-i}^{k}\left(n\right)\boldsymbol{P}\right)\hspace{-2pt}\right)\hspace{-2pt}\boldsymbol{E}_{-1}.\label{eq:help!}
\end{align}
Then, taking the $\ell_{1}$-norm on both sides of \eqref{eq:help!},
using the fact that $\left\Vert \boldsymbol{E}_{-1}\right\Vert _{1}\equiv1$
and invoking Lemma \ref{Elementary_Lemma_2}, we get
\begin{align}
 & \hspace{-2pt}\hspace{-2pt}\hspace{-2pt}\hspace{-2pt}\hspace{-2pt}\hspace{-2pt}\hspace{-2pt}\hspace{-2pt}\left\Vert \boldsymbol{E}_{t}-\widetilde{\boldsymbol{E}}_{t}^{k}\left(n\right)\right\Vert _{1}\nonumber \\
 & \le\sum_{i\in\mathbb{N}_{t}}\left(\prod_{j=0}^{i-1}\left\Vert \boldsymbol{\Lambda}_{t-j}\right\Vert _{1}\left\Vert \boldsymbol{P}\right\Vert _{1}\right)\left(\prod_{j=i+1}^{t}\left\Vert \widetilde{\boldsymbol{\Lambda}}_{t-j}^{k}\left(n\right)\right\Vert _{1}\left\Vert \boldsymbol{P}\right\Vert _{1}\right)\left\Vert \left(\boldsymbol{\Lambda}_{t-i}-\widetilde{\boldsymbol{\Lambda}}_{t-i}^{k}\left(n\right)\right)\right\Vert _{1}\left\Vert \boldsymbol{P}\right\Vert _{1}.\label{eq:Theorem_1}
\end{align}
Of course, the $\ell_{1}$-norm for a matrix (induced by the usual
$\ell_{1}$-norm for vectors) coincides with its maximum column sum.
As a result, it will be true that $\left\Vert \boldsymbol{P}\right\Vert _{1}\equiv1$,
since $\boldsymbol{P}$ constitutes a column stochastic matrix. Now,
recalling the internal structure of the diagonal matrices $\boldsymbol{\Lambda}_{t}$
and $\widetilde{\boldsymbol{\Lambda}}_{t}^{k}$, $t\in\mathbb{N}$,
the fact that $\lambda_{inf}$ is strictly greater than unity and
known apriori and Lemma \ref{Lemma_ADMM_3} stated and proved above,
it will be true that
\begin{flalign}
\left\Vert \boldsymbol{\Lambda}_{t-i}\right\Vert _{1} & \le\dfrac{1}{\lambda_{inf}^{N/2}}\quad\text{and}\quad\left\Vert \widetilde{\boldsymbol{\Lambda}}_{t-i}^{k}\left(n\right)\right\Vert _{1}\le\dfrac{1}{\lambda_{inf}^{N/4}},
\end{flalign}
for all $i\in\mathbb{N}_{t}$, under the conditions the aforementioned
lemma suggests. Then, the RHS of \eqref{eq:Theorem_1} can be further
bounded as
\begin{alignat}{1}
\left\Vert \boldsymbol{E}_{t}-\widetilde{\boldsymbol{E}}_{t}^{k}\left(n\right)\right\Vert _{1} & \hspace{-2pt}\le\hspace{-2pt}\sum_{i\in\mathbb{N}_{t}}\dfrac{\left\Vert \left(\boldsymbol{\Lambda}_{t-i}-\widetilde{\boldsymbol{\Lambda}}_{t-i}^{k}\left(n\right)\right)\right\Vert _{1}}{\lambda_{inf}^{Ni/2}\lambda_{inf}^{N\left(t-i\right)/4}}\nonumber \\
 & \hspace{-2pt}\equiv\hspace{-2pt}\dfrac{1}{\lambda_{inf}^{Nt/4}}\hspace{-2pt}\sum_{i\in\mathbb{N}_{t}}\hspace{-2pt}\dfrac{\left\Vert \left(\boldsymbol{\Lambda}_{t-i}-\widetilde{\boldsymbol{\Lambda}}_{t-i}^{k}\left(n\right)\right)\right\Vert _{1}}{\lambda_{inf}^{Ni/4}}.\label{eq:Theorem_2}
\end{alignat}
Next, focusing on each term inside the summation above, we have (see
Lemma \ref{Lemma_GROWTH})
\begin{flalign}
\left\Vert \left(\boldsymbol{\Lambda}_{t-i}-\widetilde{\boldsymbol{\Lambda}}_{t-i}^{k}\left(n\right)\right)\right\Vert _{1} & \hspace{-2pt}\equiv\hspace{-2pt}\max_{l\in\mathbb{N}_{L}^{+}}\left|\exp\left(-\dfrac{1}{2}\theta_{t-i}\left(\boldsymbol{x}_{j}\right)\right)-\exp\left(-\dfrac{1}{2}\boldsymbol{\vartheta}_{t-i}^{l}\left(n,k\right)\right)\right|\nonumber \\
 & \hspace{-2pt}\le\hspace{-2pt}\max_{l\in\mathbb{N}_{L}^{+}}\left|\theta_{t-i}\hspace{-2pt}\left(\boldsymbol{x}_{j}\right)\hspace{-2pt}-\hspace{-2pt}\boldsymbol{\vartheta}_{t-i}^{l}\hspace{-2pt}\left(n,k\right)\right|\hspace{-2pt}\le\hspace{-2pt}\max_{l\in\mathbb{N}_{L}^{+}}\gamma\hspace{-2pt}\left\Vert \boldsymbol{\theta}_{t-i}\hspace{-2pt}\left(\boldsymbol{x}_{j}\right)\right\Vert _{2}\hspace{-2pt}n\rho^{n}\nonumber \\
 & \hspace{-2pt}\le\hspace{-2pt}\gamma\delta S^{1.5}CN\left(1+\log\left(T+1\right)\right)n\rho^{n}.
\end{flalign}
Using the above into \eqref{eq:Theorem_2}, we arrive at the inequality
\begin{flalign}
\left\Vert \boldsymbol{E}_{t}-\widetilde{\boldsymbol{E}}_{t}^{k}\left(n\right)\right\Vert _{1} & \hspace{-2pt}\le\hspace{-2pt}\dfrac{\gamma\delta S^{1.5}CN\left(1+\log\left(T+1\right)\right)n\rho^{n}}{\lambda_{inf}^{Nt/4}}\sum_{i\in\mathbb{N}_{t}}\dfrac{1}{\lambda_{inf}^{Ni/4}}\nonumber \\
 & \hspace{-2pt}\le\hspace{-2pt}\dfrac{\gamma\delta S^{1.5}CN\left(1+\log\left(T+1\right)\right)n\rho^{n}}{\lambda_{inf}^{Nt/4}}\left(t+\dfrac{1}{\lambda_{inf}^{N/4}}\right)\nonumber \\
 & \hspace{-2pt}\le\hspace{-2pt}\gamma\delta S^{1.5}C\left(1+\log\left(T+1\right)\right)n\rho^{n}\left(\hspace{-2pt}\dfrac{Nt}{\lambda_{inf}^{Nt/4}}\hspace{-2pt}+\hspace{-2pt}\dfrac{N\left(t+1\right)}{\lambda_{inf}^{N\left(t+1\right)/4}}\hspace{-2pt}\right)\nonumber \\
 & \hspace{-2pt}\le\hspace{-2pt}\dfrac{8\gamma\delta S^{1.5}C\left(1+\log\left(T+1\right)\right)n\rho^{n}}{\log\left(\lambda_{inf}\right)\lambda_{inf}^{1/\log\left(\lambda_{inf}\right)}}.
\end{flalign}
But because $\lambda_{inf}^{1/\log\left(\lambda_{inf}\right)}\hspace{-2pt}\equiv e$
and $\left(\log\left(\lambda_{inf}\right)\right)^{-1}\hspace{-2pt}\le\hspace{-2pt}1$,
we get
\begin{align}
\left\Vert \boldsymbol{E}_{t}\hspace{-2pt}-\hspace{-2pt}\widetilde{\boldsymbol{E}}_{t}^{k}\left(n\right)\right\Vert _{1}\hspace{-2pt} & \le3\gamma\delta S^{1.5}C\left(1\hspace{-2pt}+\hspace{-2pt}\log\left(T+1\right)\right)n\rho^{n},
\end{align}
holding true for all $t\in\mathbb{N}_{T}$, with probability at least
$1-\left(T+1\right)^{1-CN}\exp\left(-CN\right)$, provided that
\begin{align}
n-\tau\log\left(\gamma n\right) & \hspace{-2pt}\ge\hspace{-2pt}\tau\log\left(\hspace{-2pt}\dfrac{2\delta S^{1.5}C\left(1+\log\hspace{-2pt}\left(T+1\right)\right)}{\log\left(\lambda_{inf}\right)}\hspace{-2pt}\right)\hspace{-2pt}.\label{eq:Theorem_3}
\end{align}
Finally, choose an $\varepsilon\in\left(0,1\right)$. Then, a sufficient
condition such that $\left\Vert \boldsymbol{E}_{t}\hspace{-2pt}-\hspace{-2pt}\widetilde{\boldsymbol{E}}_{t}^{k}\left(n\right)\right\Vert _{1}\hspace{-2pt}\le\hspace{-2pt}\varepsilon$,
for all $t\in\mathbb{N}_{T}$ and $k\in\mathbb{N}_{S}^{+}$, is 
\begin{equation}
3\gamma\delta S^{1.5}C\left(1+\log\left(T+1\right)\right)n\rho^{n}\le\varepsilon,
\end{equation}
or, after some algebra,
\begin{align}
n-\tau\log\left(\gamma n\right) & \ge\tau\log\left(\hspace{-2pt}\dfrac{3\delta S^{1.5}C\left(1+\log\hspace{-2pt}\left(T+1\right)\right)}{\varepsilon}\hspace{-2pt}\right)\hspace{-2pt}.\label{eq:Theorem_4}
\end{align}
Of course, the bounds \eqref{eq:Theorem_3} and \eqref{eq:Theorem_4}
must hold at the same time. Thus, for each $t\in\mathbb{N}_{T}$ ,
$\varepsilon\in\left(0,1\right)$ and choosing
\begin{equation}
\eta\triangleq\delta\max\left\{ 2/\log\left(\lambda_{inf}\right),3\right\} >1,
\end{equation}
it will be true that $\sup_{t\in\mathbb{N}_{T}}\sup_{k\in\mathbb{N}_{S}^{+}}\left\Vert \boldsymbol{E}_{t}-\widetilde{\boldsymbol{E}}_{t}^{k}\left(n\right)\right\Vert _{1}\le\varepsilon$,
with probability at least $1-\left(T+1\right)^{1-CN}$\linebreak{}
$\exp\left(-CN\right)$, provided that \eqref{eq:CONDITION} holds,
completing the proof.
\end{proof}

Our main result follows, establishing a fundamental lower bound on
the minimal number of consensus steps, such that the distributed filter
remains \textit{uniformly} stable with a prescribed accuracy level,
within finite operational horizon and across all sensors in the network.
\begin{thm}
\textbf{\textup{(Stability of the Distributed HMM Estimator)}} \label{Theorem_Filtering_ADMM-2}
Fix a natural $1\le T<\infty$ and choose $\varepsilon\in\left(0,1\right]$,
$m\ge0$, $C\ge1$, and a number of sensors $S\ge2$. Then, there
exists a constant $c>0$ such that, as long as $n\in\mathbb{N}^{\left\lfloor 2\epsilon_{max}+2\right\rfloor }$
and
\begin{flalign}
n-\tau\log\left(\gamma n\right) & \ge c\tau CNT\log\hspace{-2pt}\left(\dfrac{CST}{\varepsilon}\right)+\tau m\nonumber \\
 & \equiv{\cal O}\left(\hspace{-2pt}\tau CNT\log\hspace{-2pt}\left(\dfrac{CST}{\varepsilon}\right)\hspace{-2pt}\hspace{-2pt}\right)+\tau m,\label{eq:CONDITION-2}
\end{flalign}
the absolute error between $\pi_{t\left|\mathscr{Y}_{t}\right.}$
and $\widetilde{\pi}_{t\left|\mathscr{Y}_{t}\right.}^{k}\left(n\right)$
satisfies
\begin{equation}
\sup_{k\in\mathbb{N}_{S}^{+}}\sup_{t\in\mathbb{N}_{T}}\left\Vert \pi_{t\left|\mathscr{Y}_{t}\right.}-\widetilde{\pi}_{t\left|\mathscr{Y}_{t}\right.}^{k}\left(n\right)\right\Vert _{1}\le\varepsilon\exp\left(-m\right),
\end{equation}
with probability at least $1-\left(T+1\right)^{1-CN}\exp\left(-CN\right)$.
That is, provided \eqref{eq:CONDITION-2} holds locally at each sensor
$k\in\mathbb{N}_{S}^{+}$, $\pi_{t\left|\mathscr{Y}_{t}\right.}$
equals $\widetilde{\pi}_{t\left|\mathscr{Y}_{t}\right.}^{k}\left(n\right)$
\textbf{within }a \textbf{global, exponentially decreasing} factor
$\varepsilon\exp\left(-m\right)$, with overwhelmingly high probability.
\end{thm}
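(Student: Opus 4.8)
The plan is to reduce the normalized statement to the unnormalized bound already obtained in Theorem~\ref{Theorem_Filtering_ADMM}, paying for the normalization with a uniform lower bound on $\|\boldsymbol{E}_t\|_1$. Since both $\boldsymbol{E}_t$ and $\widetilde{\boldsymbol{E}}_t^k(n)$ are entrywise nonnegative, the elementary normalization inequality for nonnegative vectors $\boldsymbol{a},\boldsymbol{b}$,
\begin{equation*}
\left\Vert\frac{\boldsymbol{a}}{\|\boldsymbol{a}\|_1}-\frac{\boldsymbol{b}}{\|\boldsymbol{b}\|_1}\right\Vert_1\le\frac{2}{\|\boldsymbol{a}\|_1}\left\Vert\boldsymbol{a}-\boldsymbol{b}\right\Vert_1,
\end{equation*}
applied with $\boldsymbol{a}\equiv\boldsymbol{E}_t$ and $\boldsymbol{b}\equiv\widetilde{\boldsymbol{E}}_t^k(n)$, gives
\begin{equation*}
\left\Vert\pi_{t\left|\mathscr{Y}_t\right.}-\widetilde{\pi}_{t\left|\mathscr{Y}_t\right.}^k(n)\right\Vert_1\le\frac{2}{\|\boldsymbol{E}_t\|_1}\left\Vert\boldsymbol{E}_t-\widetilde{\boldsymbol{E}}_t^k(n)\right\Vert_1.
\end{equation*}
The numerator is controlled uniformly in $(t,k)$ by the intermediate estimate from the proof of Theorem~\ref{Theorem_Filtering_ADMM}, namely $\|\boldsymbol{E}_t-\widetilde{\boldsymbol{E}}_t^k(n)\|_1\le3\gamma\delta S^{1.5}C(1+\log(T+1))n\rho^n$, valid on the event $\mathcal{T}_T$ of Lemma~\ref{Lemma_WHP}. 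It therefore remains only to lower-bound $\|\boldsymbol{E}_t\|_1$ on that same event.

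For the denominator I would iterate the exact recursion $\boldsymbol{E}_t\equiv\boldsymbol{\Lambda}_t\boldsymbol{P}\boldsymbol{E}_{t-1}$. Because $\boldsymbol{P}$ is column stochastic and $\boldsymbol{E}_{t-1}$ is nonnegative, $\|\boldsymbol{P}\boldsymbol{E}_{t-1}\|_1\equiv\|\boldsymbol{E}_{t-1}\|_1$, whence $\|\boldsymbol{E}_t\|_1\equiv\sum_j\mathsf{L}_t(\boldsymbol{x}_j)(\boldsymbol{P}\boldsymbol{E}_{t-1})(j)\ge\mathsf{L}_{\min}\|\boldsymbol{E}_{t-1}\|_1$, where $\mathsf{L}_{\min}$ is any uniform (over $t\le T$, $j$) lower bound on the likelihoods $\mathsf{L}_t(\boldsymbol{x}_j)\equiv\exp(-\theta_t(\boldsymbol{x}_j)/2)$. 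The crucial point is to bound the \emph{scalar} $\theta_t(\boldsymbol{x}_j)$ directly, rather than through the $S^{1.5}$ estimate of Lemma~\ref{Lemma_GROWTH}. Writing $\theta_t(\boldsymbol{x}_j)\equiv\overline{\boldsymbol{y}}_t^{\boldsymbol{T}}\boldsymbol{\Sigma}_t^{-1}\overline{\boldsymbol{y}}_t+\log\det(\boldsymbol{\Sigma}_t)$ and using $\lambda_{\min}(\boldsymbol{\Sigma}_t)\ge\lambda_{inf}$, $\lambda_{\max}(\boldsymbol{\Sigma}_t)\le\lambda_{sup}$, $\|\boldsymbol{\mu}_t\|_2\le\mu_{sup}$ together with $\mathcal{T}_T$ (on which $Q_i\equiv\|\boldsymbol{y}_i\|_2^2<\beta CN(1+\log(T+1))$ for every $i\le T$), one obtains a uniform bound $\theta_t(\boldsymbol{x}_j)\le\theta_{\sup}$ with $\theta_{\sup}=\mathcal{O}(CN(1+\log(T+1)))$ carrying \emph{no} factor of $S$. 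Hence $\mathsf{L}_{\min}\equiv\exp(-\theta_{\sup}/2)$ and, by induction with $\|\boldsymbol{E}_{-1}\|_1\equiv1$, $\|\boldsymbol{E}_t\|_1\ge\exp(-(t+1)\theta_{\sup}/2)\ge\exp(-(T+1)\theta_{\sup}/2)$ for all $t\in\mathbb{N}_T$.

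Combining the three pieces on $\mathcal{T}_T$ yields, uniformly in $(t,k)$,
\begin{equation*}
\left\Vert\pi_{t\left|\mathscr{Y}_t\right.}-\widetilde{\pi}_{t\left|\mathscr{Y}_t\right.}^k(n)\right\Vert_1\le6\gamma\delta S^{1.5}C(1+\log(T+1))\,n\rho^n\exp\!\left(\tfrac{T+1}{2}\theta_{\sup}\right),
\end{equation*}
and it remains to force the right-hand side below $\varepsilon\exp(-m)$. Taking logarithms and using $\rho^n\equiv\exp(-n/\tau)$ turns this into a condition of the form $n-\tau\log(\gamma n)\ge\tau[\tfrac{T+1}{2}\theta_{\sup}+\log(1/\varepsilon)+\log(\text{poly}(S,C,N,T))]+\tau\log(1/\gamma)+\tau m$. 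The dominant summand is $\tau\tfrac{T+1}{2}\theta_{\sup}=\mathcal{O}(\tau CNT\log(T+1))$, while $\tau\log(1/\varepsilon)$, the prefactor logarithms $\mathcal{O}(\tau\log(SCNT))$, and the residual $\tau\log(1/\gamma)\ge0$ are all dominated by a single $c\tau CNT\log(CST/\varepsilon)$ term for $C\ge1$, $N\ge1$, $T\ge1$, $S\ge2$ and a suitably large universal constant $c$ (depending only on the model-independent quantities $\delta,\beta,\lambda_{inf},\lambda_{sup},\mu_{sup}$). The additive $\tau m$ is carried through verbatim and produces exactly the extra $\exp(-m)$ decay. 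Since every estimate is taken on the single event $\mathcal{T}_T$, the probability bound $1-(T+1)^{1-CN}\exp(-CN)$ is inherited without any union-bound loss.

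The main obstacle is the denominator $1/\|\boldsymbol{E}_t\|_1$: since the per-step likelihoods are at most $\lambda_{inf}^{-N/2}<1$, the unnormalized mass $\|\boldsymbol{E}_t\|_1$ decays geometrically in $t$, so $1/\|\boldsymbol{E}_t\|_1$ can grow like $\exp(\Theta(CNT\log T))$ — and it is precisely this $T$-fold accumulation of the per-step bound that upgrades the loglinear-in-$T$ behaviour of Theorem~\ref{Theorem_Filtering_ADMM} into the $CNT$ scaling of \eqref{eq:CONDITION-2}. The delicate part is ensuring the exponent carries no spurious factor of $S$: this forces one to bound the averaged scalar $\theta_t(\boldsymbol{x}_j)$ directly from Lemma~\ref{Lemma_WHP}, instead of inheriting the $S^{1.5}$ growth of Lemma~\ref{Lemma_GROWTH}, so that the network size $S$ enters the final condition only inside the logarithm, as claimed.
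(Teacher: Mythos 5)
Your proposal is correct and follows essentially the same route as the paper: the reverse-triangle normalization inequality, the numerator bound inherited from Theorem \ref{Theorem_Filtering_ADMM}, a lower bound on $\left\Vert \boldsymbol{E}_{t}\right\Vert _{1}$ of order $\exp\left(-{\cal O}\left(CNT\log T\right)\right)$ on the event ${\cal T}_{T}$ (which the paper cites from prior work and you correctly re-derive by iterating the recursion and bounding the scalar $\theta_{t}\left(\boldsymbol{x}_{j}\right)$ without the $S^{1.5}$ factor), and the final absorption of all logarithmic prefactors into a single $c\tau CNT\log\left(CST/\varepsilon\right)$ term plus $\tau m$. The identification of the denominator's $T$-fold accumulation as the source of the linear-in-$T$ scaling matches the paper's reasoning exactly.
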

\begin{proof}[Proof of Theorem \ref{Theorem_Filtering_ADMM-2}]
First, by construction and invoking the reverse triangle inequality,
it is easy to show that the error in the $\ell_{1}$-norm between
$\pi_{t\left|\mathscr{Y}_{t}\right.}$ and $\widetilde{\pi}_{t\left|\mathscr{Y}_{t}\right.}^{k}\left(n\right)$
may be upper bounded as
\begin{equation}
\left\Vert \pi_{t\left|\mathscr{Y}_{t}\right.}-\widetilde{\pi}_{t\left|\mathscr{Y}_{t}\right.}^{k}\left(n\right)\right\Vert _{1}\le2\dfrac{\left\Vert \boldsymbol{E}_{t}-\widetilde{\boldsymbol{E}}_{t}^{k}\left(n\right)\right\Vert _{1}}{\left\Vert \boldsymbol{E}_{t}\right\Vert _{1}},
\end{equation}
for all $k\in\mathbb{N}_{S}^{+}$, $t\in\mathbb{N}_{T}$ and any qualifying
$n\in\mathbb{N}^{2}$. In light of Theorem \ref{Theorem_Filtering_ADMM},
as long as the respective conditions on $n$ are fulfilled, it will
be true that, for any $\varepsilon\in\left(0,1\right)$,
\begin{equation}
\left\Vert \pi_{t\left|\mathscr{Y}_{t}\right.}-\widetilde{\pi}_{t\left|\mathscr{Y}_{t}\right.}^{k}\left(n\right)\right\Vert _{1}\left(\omega\right)\le\dfrac{2\varepsilon}{\left\Vert \boldsymbol{E}_{t}\right\Vert _{1}\left(\omega\right)},
\end{equation}
for all $\omega\in{\cal T}_{T}$, with ${\cal P}\left({\cal T}_{T}\right)\ge1-\left(T+1\right)^{1-CN}\exp\left(-CN\right)$.
Now, as far as $\left\Vert \boldsymbol{E}_{t}\right\Vert _{1}$ is
concerned, it has been shown by the authors that \cite{KalPetGRID2014,KalPetNonlinear_2015}
\begin{align}
\left\Vert \boldsymbol{E}_{t}\right\Vert _{1}\left(\omega\right) & \ge\lambda_{sup}^{-N\left(T+1\right)/2}\exp\hspace{-2pt}\left(\hspace{-2pt}\hspace{-2pt}-\dfrac{\left(\hspace{-2pt}\sqrt{\beta CN\hspace{-2pt}\left(1\hspace{-2pt}+\hspace{-2pt}\log\hspace{-2pt}\left(T\hspace{-2pt}+\hspace{-2pt}1\right)\right)}\hspace{-2pt}+\hspace{-2pt}\mu_{sup}\right)^{\hspace{-2pt}2}\hspace{-2pt}\left(T\hspace{-2pt}+\hspace{-2pt}1\right)}{2\lambda_{inf}}\hspace{-2pt}\right)\hspace{-2pt}\hspace{-2pt}\triangleq\hspace{-2pt}B,
\end{align}
for all $\omega$ in the same measurable set ${\cal T}_{T}$, where
all our probabilistic statements made so far have taken place in.
As a result, and given that $\max\left\{ \lambda_{sup},e\right\} \equiv\lambda_{sup}$
by assumption, there exists a constant $c_{0}>0$, such that
\begin{equation}
\dfrac{1}{B}\lambda_{sup}^{-c_{0}CNT\log\left(T\right)}\le1.
\end{equation}
In order to show this, note that
\begin{alignat}{1}
\dfrac{\left(T\hspace{-2pt}+\hspace{-2pt}1\right)}{2}\hspace{-2pt}\left(\hspace{-2pt}N\hspace{-2pt}+\hspace{-2pt}\dfrac{\left(\hspace{-2pt}\sqrt{\beta CN\hspace{-2pt}\left(1\hspace{-2pt}+\hspace{-2pt}\log\hspace{-2pt}\left(T\hspace{-2pt}+\hspace{-2pt}1\right)\right)}\hspace{-2pt}+\hspace{-2pt}\mu_{sup}\right)^{2}}{\lambda_{inf}}\right)\le & \dfrac{\left(T\hspace{-2pt}+\hspace{-2pt}1\right)}{2}\hspace{-2pt}\left(\hspace{-2pt}N\hspace{-2pt}+\hspace{-2pt}\dfrac{\beta CN\hspace{-2pt}\left(1\hspace{-2pt}+\hspace{-2pt}\log\hspace{-2pt}\left(T\hspace{-2pt}+\hspace{-2pt}1\right)\right)\left(\mu_{sup}\hspace{-2pt}+\hspace{-2pt}1\right)^{2}}{\lambda_{inf}}\right)\nonumber \\
\le & \left(T\hspace{-2pt}+\hspace{-2pt}1\right)CN\hspace{-2pt}\left(1\hspace{-2pt}+\hspace{-2pt}\log\hspace{-2pt}\left(T\hspace{-2pt}+\hspace{-2pt}1\right)\right)\hspace{-2pt}\left(\hspace{-2pt}\dfrac{1}{2}\hspace{-2pt}+\hspace{-2pt}\dfrac{\beta\left(\mu_{sup}\hspace{-2pt}+\hspace{-2pt}1\right)^{2}}{2\lambda_{inf}}\right)\nonumber \\
\triangleq & \,c_{1}\left(T\hspace{-2pt}+\hspace{-2pt}1\right)CN\hspace{-2pt}\left(1\hspace{-2pt}+\hspace{-2pt}\log\hspace{-2pt}\left(T\hspace{-2pt}+\hspace{-2pt}1\right)\right)\nonumber \\
\le & \,c_{1}c_{2}TCN\hspace{-2pt}\log\left(T\right)\triangleq c_{0}TCN\hspace{-2pt}\log\left(T\right),
\end{alignat}
whenever $T\ge1$, for some positive constants $c_{1}$ and $c_{2}$.
Consequently, if, for each $t\in\mathbb{N}_{T}$, one chooses
\begin{equation}
\varepsilon\hspace{-2pt}\equiv\hspace{-2pt}\dfrac{1}{2}\lambda_{sup}^{-c_{0}CNT\hspace{-2pt}\log\left(T\right)}\widetilde{\varepsilon}\exp\hspace{-2pt}\left(-m\right)\hspace{-2pt}\le\hspace{-2pt}1,\label{eq:FINAL_EPSILON}
\end{equation}
for some $\widetilde{\varepsilon}\in\left(0,1\right]$ and $m\ge0$,
it will be true that
\begin{equation}
\sup_{k\in\mathbb{N}_{S}^{+}}\sup_{t\in\mathbb{N}_{T}}\left\Vert \pi_{t\left|\mathscr{Y}_{t}\right.}-\widetilde{\pi}_{t\left|\mathscr{Y}_{t}\right.}^{k}\left(n\right)\right\Vert _{1}\le\widetilde{\varepsilon}\exp\left(-m\right),
\end{equation}
provided that
\begin{equation}
n-\tau\log\left(\gamma n\right)\ge\tau\log\left(\dfrac{\eta S^{1.5}C\left(1+\log\left(T+1\right)\right)}{\varepsilon}\right).
\end{equation}
Replacing $\varepsilon$ with the expression in \eqref{eq:FINAL_EPSILON}
and for $S\ge2$, the logarithmic term above may be upper bounded
as
\begin{flalign}
\log\hspace{-2pt}\left(\hspace{-2pt}\dfrac{\eta S^{1.5}C\left(1+\log\left(T+1\right)\right)}{\varepsilon}\hspace{-2pt}\right)\equiv & \log\hspace{-2pt}\left(\hspace{-2pt}\dfrac{2\eta S^{1.5}C\left(1+\log\left(T+1\right)\right)}{\widetilde{\varepsilon}}\lambda_{sup}^{c_{0}CNT\log\left(T\right)}\hspace{-2pt}\exp\left(m\right)\hspace{-2pt}\right)\nonumber \\
\le & \,m\hspace{-2pt}+\hspace{-2pt}c_{0}\hspace{-2pt}\log\hspace{-2pt}\left(\lambda_{sup}\right)\hspace{-2pt}CNT\log\hspace{-2pt}\left(T\right)\hspace{-2pt}+\hspace{-2pt}\log\hspace{-2pt}\left(\hspace{-2pt}\dfrac{2c_{2}\eta S^{1.5}C\hspace{-2pt}\log\left(T\right)}{\widetilde{\varepsilon}}\hspace{-2pt}\right)\nonumber \\
\triangleq & \,m\hspace{-2pt}+\widetilde{c}_{0}CNT\log\hspace{-2pt}\left(T\right)\hspace{-2pt}+\hspace{-2pt}\log\hspace{-2pt}\left(\hspace{-2pt}\dfrac{2c_{2}\eta S^{1.5}C\hspace{-2pt}\log\left(T\right)}{\widetilde{\varepsilon}}\hspace{-2pt}\right)\nonumber \\
\le & \,m+\widetilde{c}_{0}CNT\log\hspace{-2pt}\left(T\right)\hspace{-2pt}+\widetilde{c}_{2}\log\hspace{-2pt}\left(\dfrac{CST}{\widetilde{\varepsilon}}\right)\nonumber \\
\le & \,m+\left(\widetilde{c}_{0}+\widetilde{c}_{2}\right)CNT\log\hspace{-2pt}\left(\dfrac{CST}{\widetilde{\varepsilon}}\right)\nonumber \\
\triangleq & \,m+cCNT\log\hspace{-2pt}\left(\dfrac{CST}{\widetilde{\varepsilon}}\right),
\end{flalign}
for some other positive constant $\widetilde{c}_{2}$. Putting it
altogether and renaming $\widetilde{\varepsilon}$ to $\varepsilon$,
we have finally shown that, for any $\varepsilon\in\left(0,1\right]$,
$m\ge0$, $S\ge2$ and any natural $1\le T<\infty$, there exists
a constant $c>0$, such that whenever
\begin{flalign}
n-\tau\log\left(\gamma n\right) & \ge c\tau CNT\hspace{-2pt}\log\hspace{-2pt}\left(\dfrac{CST}{\varepsilon}\right)+\tau m,
\end{flalign}
it is true that
\begin{equation}
\sup_{k\in\mathbb{N}_{S}^{+}}\sup_{t\in\mathbb{N}_{T}}\left\Vert \pi_{t\left|\mathscr{Y}_{t}\right.}-\widetilde{\pi}_{t\left|\mathscr{Y}_{t}\right.}^{k}\left(n\right)\right\Vert _{1}\le\varepsilon\exp\left(-m\right),
\end{equation}
with probability at least $1-\left(T+1\right)^{1-CN}\exp\left(-CN\right)$,
and the proof is complete.
\end{proof}
\begin{rem}
The linear-plus-logarithmic form of the LHS of \eqref{eq:CONDITION-2}
does not seem to have a specific physical interpretation, related
to the conclusions of Theorem \ref{Theorem_Filtering_ADMM-2}. In
particular, the logarithmic term appearing on the LHS of \eqref{eq:CONDITION-2}
results as an artifact of our analytical development. Nevertheless,
it is true that, for relatively small values of $\tau$ and $\gamma$
and for sufficiently large $n$, the logarithmic term $\tau\log\left(\gamma n\right)$
is insignificant, compared to $n$, and may be considered relatively
constant, that is, $n-\tau\log\left(\gamma n\right)\approx n-\tau\kappa\left(\gamma\right)$,
for some $\kappa$, possibly dependent on $\gamma$, but independent
of $n$. Therefore, the logarithmic artifact on the LHS of \eqref{eq:CONDITION-2}
may be heuristically considered as a positive, constant bias on the
RHS as $n$ increases, which, however, does not affect the asymptotic
behavior of the result.\hfill{}\ensuremath{\blacksquare}
\end{rem}

\section{Discussion \& Some Numerical Simulations}

In this Section, we present some numerical simulations, experimentally
validating some of the properties of the distributed filtering scheme
considered in this paper, as well as a relevant discussion.

For our numerical simulations, we assume that we are given a DNA with
$S\hspace{-2pt}\equiv\hspace{-2pt}60$ sensors. The connectivity threshold
of the underlying RGG $r$ is set to $0.2$, representing a relatively
sparsely connected network. Each sensor in the network observes noisy
versions of a Markov chain with $L\hspace{-2pt}\equiv\hspace{-2pt}4$
states, state space ${\cal X}\hspace{-2pt}\equiv\hspace{-2pt}\left\{ 0.7,1/2,1,1/3\right\} $,
a stochastic transition matrix
\begin{equation}
\boldsymbol{P}\equiv\begin{bmatrix}0.4 & 0.25 & 0.2 & 0.3\\
0.3 & 0.25 & 0.3 & 0.3\\
0.2 & 0.25 & 0.3 & 0.2\\
0.1 & 0.25 & 0.2 & 0.2
\end{bmatrix}\in\mathbb{R}^{4\times4},
\end{equation}
and initial distribution $\pi_{-1}\equiv\left[1\,0\,0\,0\right]^{\boldsymbol{T}}$.
Regarding the observation model of the HMM under consideration, we
assume that $N_{k}\equiv2$, $\boldsymbol{\mu}_{t}^{k}\left(\boldsymbol{x}\right)\equiv\boldsymbol{\mu}\left(\boldsymbol{x}\right)\equiv-\left[\sin\left(\boldsymbol{x}\right)\,\sin\left(\boldsymbol{x}\right)\right]^{\boldsymbol{T}}$
and $\boldsymbol{\Sigma}_{t}^{k}\left(\boldsymbol{x}\right)\equiv\boldsymbol{\Sigma}^{k}$,
with $\boldsymbol{\Sigma}^{k}\left(i,j\right)\equiv2\exp\left(-2\left|i-j\right|\right)$,
for all $\left(i,j\right)\in\mathbb{N}_{2}^{+}\times\mathbb{N}_{2}^{+}$
and for all $k\in\mathbb{N}_{S}^{+}$. As far as the distributed averaging
scheme is concerned, ${\bf S}$ is suboptimally chosen according to
the maximum degree chain construction and $\epsilon$ is optimally
chosen, according to what was previously stated in Section \ref{sec:Preliminaries}.

As a potential practical example for the model setting considered
above, sensors could be deployed in an industrial facility, monitoring
the hidden state (a mode) of a composite chemical process, at various
locations inside the facility, at certain time intervals. In order
to increase the information diversity at each location, multiple simultaneous
measurements may be recorded, resulting in a correlated observation
model, such as the one considered in our simulations. The distributed
filtering scheme considered in this paper could then be employed,
in order to perform global state estimation without the need of a
dedicated fusion center.
\begin{figure*}
\centering$\hspace{-10pt}$\subfloat[\label{fig:Consensus_800}]{\centering\includegraphics[clip,scale=0.61]{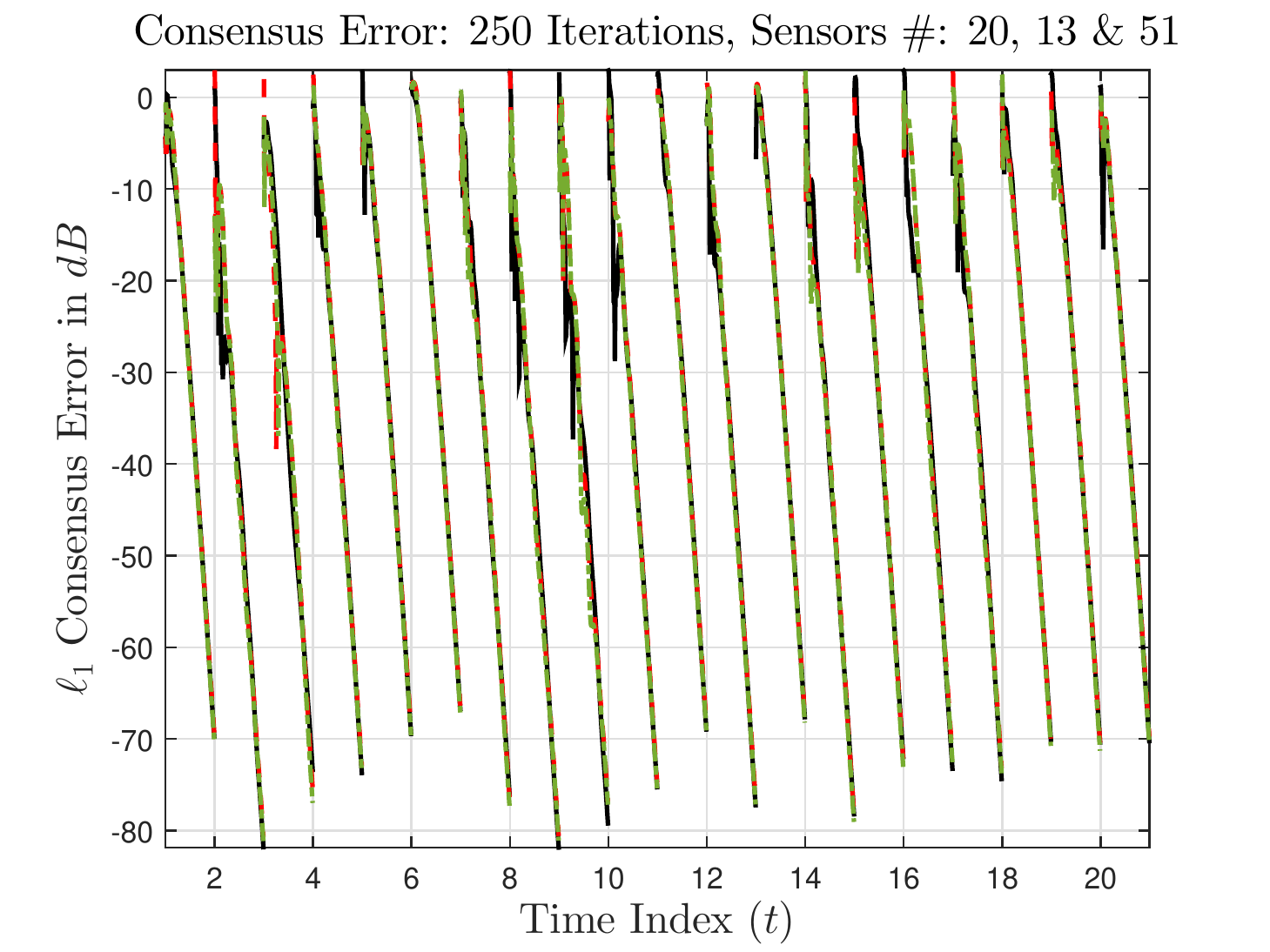}

}$\hspace{-16pt}$\subfloat[\label{fig:MMSE_800}]{\centering\includegraphics[scale=0.61]{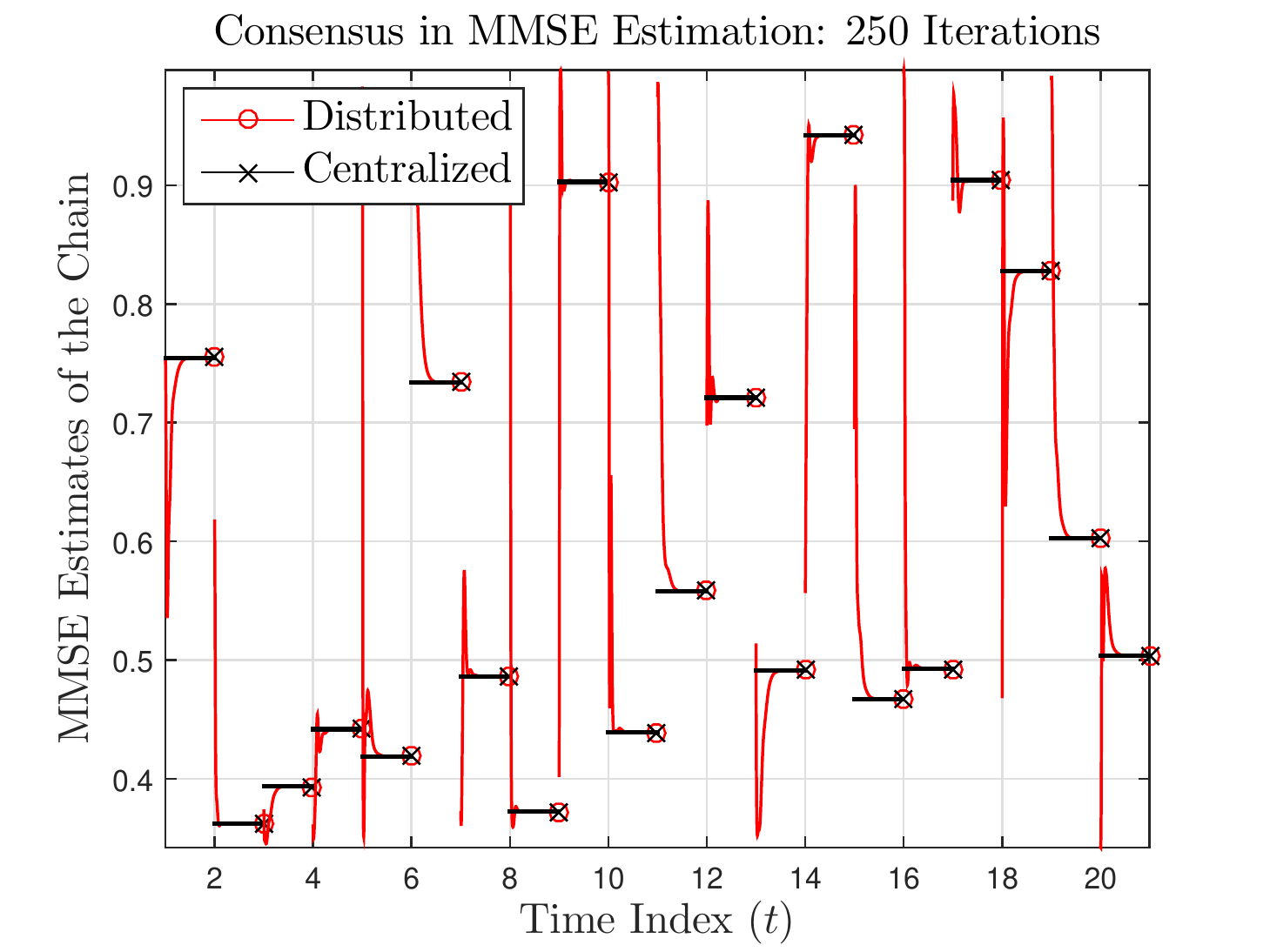}}\caption{Case $2$: $250$ consensus iterations for $T\equiv20$. (a) $\ell_{1}$
consensus error curves. (b) Consensus in MMSE estimation. Between
each pair of times, we plot the evolution of the respective distributed
estimate.}
\end{figure*}

As we discussed in Section \ref{sec:MAIN_RESULTS}, Theorem \ref{Theorem_Filtering_ADMM-2}
provides a \textit{sufficient} condition on the number of consensus
iterations, such that the $\ell_{1}$ consensus error between the
centralized and distributed versions of the posterior measure $\pi_{t\left|\mathscr{Y}_{t}\right.}$
is \textit{exponentially small}, \textit{uniformly} through the whole
operational horizon of interest and across sensors, at the same time.

Fig. \ref{fig:Consensus_800} shows the $\ell_{1}$ consensus error
for $T\equiv20$, where the number of iterations is set to $250$.
Results are shown for three randomly chosen sensors in the network.
It is important to note that the number of iterations has been chosen
such that, between each pair of consecutive times, the consensus error
\textit{does not reach machine precision}. This is critical in order
to assess the uniform properties of the consensus error, through time
and across sensors, and in order to verify the claims of our main
result, Theorem \ref{Theorem_Filtering_ADMM-2}. Indeed, as depicted
in Fig. \ref{fig:Consensus_800}, one can readily identify a perfectly
acceptable uniform upper bound for the consensus error, for all $20$
times of interest, say $-60\,dB$. Additionally and in perfect agreement
with Theorem \ref{Theorem_Filtering_ADMM-2}, for the chosen number
of consensus iterations, convergence of the averaging scheme is indeed
\textit{eventually} exponential, meaning that after some specific
number of iterations, such type of convergence is indeed achievable.
This is due to the fact that the consensus error curves appear roughly
as straight lines in the logarithmic domain, a fact which translates
to an exponential decay in the linear domain, respectively. The effect
of this behavior on estimation quality can also be observed in Fig.
\ref{fig:MMSE_800}, which shows the MMSE estimates of the chain obtained
by exploiting the respective distributed posterior estimates.

\textit{However}, it is important to note that Theorem \ref{Theorem_Filtering_ADMM-2}
does not provide a necessary condition on the behavior of the consensus
error. This means that, within the framework of this paper, stability
of distributed filtering is not theoretically predictable, in the
case where condition \eqref{eq:CONDITION-2} is not satisfied. In
fact, for almost all numerical simulations we have conducted, the
proposed scheme performed exceptionally well in terms of stability,
even for relatively smaller number of iterations, at least as far
as the given experimental setting is concerned. In particular, the
error due to the imperfect agreement on the version of the posterior
at each sensor does not significantly accumulate, as time progresses.

It is worth mentioning that, although loglinear dependence on $N\ge S$
and $T$ of the required number of consensus iterations is not really
evident from our numerical experiments, this is apparently due to
the relatively small number of sensors and the relatively ``easy''
nonlinear filtering problem considered. The aforementioned \textit{asymptotic}
loglinear behavior would certainly be revealed when the number of
sensors and/or complexity of the HMM under consideration increases.

Finally, we would like to note that the method advocated in this paper
is expected to perform better than the one in \cite{DHMM1_2010},
since it utilizes more consensus iterations (fast time) per filtering
time instant (slow time); \cite{DHMM1_2010} employs a single information
exchange among the sensors, per filtering time instant. In \cite{DHMM1_2010},
the filtering estimates at each sensor do not, in general, coincide
within reasonable bounds, that is, consensus is not always achieved.
In fact, consensus will not be achieved unless the network is \textit{strongly
connected}. On the other hand, in the distributed filtering scheme
analyzed in this work, the information collected at each sensor is
diffused uniformly through the whole network. As a result, consensus
is achieved and any potential heterogeneity among the observations
at the sensors is efficiently exploited.

\section{Conclusion}

We have studied stability of distributed inference in Gaussian HMMs
with finite state space, exploiting ADMM-like based distributed average
consensus. We have considered a distributed filtering scheme, which
relies on the distributed evaluation of the likelihood part of the
centralized nonlinear estimator under consideration. Basically assuming
the same number of consensus iterations between any two consecutive
sensor observations, we have provided a complete characterization
of a minimal number of consensus steps, guaranteeing uniform stability
of the resulting distributed nonlinear filter, within a finite operational
horizon and across all sensors in the DNA under consideration. We
have presented a fundamental result, showing that $\varepsilon$-stability
of the distributed filtering process depends only loglinearly on the
horizon of interest, $T$, and the dimensionality of the observation
process, $N$, and logarithmically on $1/\varepsilon$. Moreover,
strictly fulfilling this total loglinear bound incurs a fully quantified
exponential decay in the consensus error. Our bounds are universal,
in the sense that they are independent of the structure of the Gaussian
HMM under consideration.

\section*{Appendix A: Proof of Theorem \ref{Spectrum_of_M}}

As implied by the statement of Theorem \ref{Spectrum_of_M}, we start
by studying the dependence of the spectrum of ${\bf M}$ on that of
${\bf S}$ and on $\epsilon>0$. Of course, in order to determine
the spectrum of ${\bf M}$, we look for the $2S$ solutions of the
equation $\det\left({\bf M}-\lambda{\bf I}\right)\equiv0$, where
\begin{align}
{\bf M}-\lambda{\bf I} & \equiv\hspace{-2pt}\begin{bmatrix}\dfrac{\epsilon}{1+\epsilon}{\bf S}+\left(1-\lambda\right){\bf I} & -\dfrac{\epsilon}{2\left(1+\epsilon\right)}\hspace{-2pt}\left({\bf S}+{\bf I}\right)\\
{\bf I} & -\lambda{\bf I}
\end{bmatrix}\hspace{-2pt}.\label{eq:EIGEN_Matrix}
\end{align}
Since the submatrices ${\bf I}$ and $-\lambda{\bf I}$ obviously
commute in \eqref{eq:EIGEN_Matrix}, we may invoke (\cite{Silvester2000DET},
Theorem 3) and write
\begin{flalign}
\det\left({\bf M}\hspace{-2pt}-\hspace{-2pt}\lambda{\bf I}\right) & \hspace{-2pt}\equiv\hspace{-2pt}\det\hspace{-2pt}\left(-\dfrac{\epsilon\lambda}{1+\epsilon}{\bf S}-\lambda\left(1-\lambda\right)\hspace{-2pt}{\bf I}+\dfrac{\epsilon}{2\hspace{-2pt}\left(1+\epsilon\right)}\hspace{-2pt}\left({\bf S}+{\bf I}\right)\hspace{-2pt}\right)\nonumber \\
 & \hspace{-2pt}\equiv\hspace{-2pt}\dfrac{\left(-1\right)^{S}}{\left(2\left(1+\epsilon\right)\right)^{S}}\hspace{-2pt}\det\hspace{-2pt}\left(\epsilon\left(2\lambda\hspace{-2pt}-\hspace{-2pt}1\right){\bf S}\hspace{-2pt}+\hspace{-2pt}\left(2\lambda\left(1\hspace{-2pt}+\hspace{-2pt}\epsilon\right)\hspace{-2pt}\left(1\hspace{-2pt}-\hspace{-2pt}\lambda\right)\hspace{-2pt}-\hspace{-2pt}\epsilon\right){\bf I}\right)\hspace{-2pt}.\hspace{-2pt}\hspace{-2pt}\label{eq:Pre_Equation}
\end{flalign}
Using \eqref{eq:Pre_Equation}, after some algebra and via a diagonalization
of the symmetric ${\bf S}$, the equation $\det\hspace{-2pt}\left({\bf M}\hspace{-2pt}-\hspace{-2pt}\lambda{\bf I}\right)\hspace{-2pt}\equiv\hspace{-2pt}0$
can be easily shown to be equivalent to $S$ quadratic equations of
the form
\begin{equation}
-2\left(1+\epsilon\right)\lambda^{2}+2\left(1+\epsilon+\epsilon\lambda_{{\bf S}}\right)\lambda-\epsilon\left(1+\lambda_{{\bf S}}\right)\equiv0,\label{eq:Q_Equation}
\end{equation}
where, for notational brevity, $\lambda_{{\bf S}}$ hereafter denotes
any of the $S$ eigenvalues comprising the spectrum of ${\bf S}$.
Solving \eqref{eq:Q_Equation} results in the functional relation
\begin{equation}
\lambda_{\boldsymbol{M}}^{\pm}\hspace{-2pt}\left(\lambda_{{\bf S}},\epsilon\right)\hspace{-2pt}\equiv\hspace{-2pt}\dfrac{1+\epsilon+\epsilon\lambda_{{\bf S}}\pm\hspace{-2pt}\sqrt{1\hspace{-2pt}+\hspace{-2pt}\epsilon^{2}\left(\left(\lambda_{{\bf S}}\right)^{2}\hspace{-2pt}-\hspace{-2pt}1\right)}}{2\left(1+\epsilon\right)},\label{eq:M_Spectrum}
\end{equation}
for all $\lambda_{{\bf S}}\in\left(-1,1\right]$ and $\epsilon>0$,
corresponding to the respective expression shown in the statement
of Theorem \ref{Spectrum_of_M}.

The rest of the proof deals with characterizing \eqref{eq:M_Spectrum}
as a function of $\lambda_{{\bf S}}$ and $\epsilon$, in order to
determine the SLEM of ${\bf M}$, $\rho\equiv\rho\left(\epsilon,\lambda_{{\bf S}}^{1},\ldots,\lambda_{{\bf S}}^{S}\right)$
and, further, studying the problem of minimizing that SLEM with respect
to $\epsilon$. Fix $\epsilon>0$. When $\lambda_{{\bf S}}\equiv1$,
then $\lambda_{\boldsymbol{M}}^{+}\hspace{-2pt}\left(1,\epsilon\right)\equiv1\equiv\lambda_{\boldsymbol{M}}^{1}$
and $\lambda_{\boldsymbol{M}}^{-}\hspace{-2pt}\left(1,\epsilon\right)\equiv\epsilon/\left(1+\epsilon\right)$.
This implies that the SLEM we are looking for is either $\epsilon/\left(1+\epsilon\right)$,
or is given by some $\left|\lambda_{\boldsymbol{M}}^{\pm}\hspace{-2pt}\left(\lambda_{{\bf S}},\epsilon\right)\right|$,
for some $\lambda_{{\bf S}}\neq1$. Thus, the SLEM of ${\bf M}$ may
be expressed as
\begin{flalign}
\rho\hspace{-2pt}\left(\epsilon,\lambda_{{\bf S}}^{2},\ldots,\lambda_{{\bf S}}^{S}\right) & \hspace{-2pt}\equiv\hspace{-2pt}\max\hspace{-2pt}\left\{ \hspace{-2pt}\dfrac{\epsilon}{1+\epsilon},\left\{ \left|\lambda_{\boldsymbol{M}}^{j}\hspace{-2pt}\left(\lambda_{{\bf S}}^{i},\epsilon\right)\right|\right\} _{i\in\mathbb{N}_{S}^{2}}^{j\in\left\{ +,-\right\} }\hspace{-2pt}\right\} \nonumber \\
 & \hspace{-2pt}\equiv\hspace{-2pt}\max\hspace{-2pt}\left\{ \hspace{-2pt}\dfrac{\epsilon}{1+\epsilon},\max_{\lambda_{{\bf S}}\in\left\{ \lambda_{{\bf S}}^{i}\right\} _{i\in\mathbb{N}_{S}^{2}},j\in\left\{ +,-\right\} }\left|\lambda_{\boldsymbol{M}}^{j}\hspace{-2pt}\left(\lambda_{{\bf S}},\epsilon\right)\right|\hspace{-2pt}\right\} \hspace{-2pt}.\label{eq:max_max}
\end{flalign}
Let us focus on the inner maximization on the RHS of \eqref{eq:max_max}.
In the following, the cases where $\lambda_{{\bf S}}^{2}\ge0$ and
$\lambda_{{\bf S}}^{2}<0$ will be treated separately. As we will
see, the SLEM of ${\bf M}$ behaves quite differently under each of
the two aforementioned cases.

\textit{Suppose that} $\lambda_{{\bf S}}^{2}\ge0$. Due to the existence
of $\epsilon$ in the square root on the RHS of \eqref{eq:M_Spectrum},
it is reasonable to consider the following further two subcases:

$\bullet\;1+\epsilon^{2}\left(\left(\lambda_{{\bf S}}\right)^{2}\hspace{-2pt}-\hspace{-2pt}1\right)\hspace{-2pt}\ge\hspace{-2pt}0\,$:
If the condition on the left is true, then the corresponding eigenvalues
of ${\bf M}$, compactly expressed via \eqref{eq:M_Spectrum}, are
real. What is more, via an equivalence test, it can be easily shown
that $\lambda_{\boldsymbol{M}}^{\pm}\hspace{-2pt}\left(\lambda_{{\bf S}},\epsilon\right)>0$
(for both nonnegative and negative $\lambda_{{\bf S}}$, actually).
Consequently, in this case, $\left|\lambda_{\boldsymbol{M}}^{\pm}\hspace{-2pt}\left(\lambda_{{\bf S}},\epsilon\right)\right|\equiv\lambda_{\boldsymbol{M}}^{\pm}\hspace{-2pt}\left(\lambda_{{\bf S}},\epsilon\right)$,
for each fixed $\epsilon$. Under these considerations, because $\lambda_{\boldsymbol{M}}^{+}\hspace{-2pt}\left(\lambda_{{\bf S}},\epsilon\right)>\lambda_{\boldsymbol{M}}^{-}\hspace{-2pt}\left(\lambda_{{\bf S}},\epsilon\right)$
for any feasible $\lambda_{{\bf S}}$, as well as due to the easily
provable fact that, whenever $\lambda_{{\bf S}}\ge0$ and $\widetilde{\lambda}_{{\bf S}}<0$,
$\lambda_{\boldsymbol{M}}^{+}\hspace{-2pt}\left(\lambda_{{\bf S}},\epsilon\right)>\lambda_{\boldsymbol{M}}^{+}\hspace{-2pt}\left(\widetilde{\lambda}_{{\bf S}},\epsilon\right)$
(again via an equivalence test), it suffices to consider \textit{maximizing}
$\lambda_{\boldsymbol{M}}^{+}\hspace{-2pt}\left(\lambda_{{\bf S}},\epsilon\right)$
\textit{only for the case of a nonnegative} $\lambda_{{\bf S}}$.
Further, assuming that $\lambda_{{\bf S}}\ge0$ (always in addition
to the condition $1+\epsilon^{2}\left(\left(\lambda_{{\bf S}}\right)^{2}\hspace{-2pt}-\hspace{-2pt}1\right)\hspace{-2pt}\ge\hspace{-2pt}0$),
it is easy to show that $\partial\lambda_{\boldsymbol{M}}^{+}\hspace{-2pt}\left(\lambda_{{\bf S}},\epsilon\right)/\partial\lambda_{{\bf S}}>0$,
implying that $\lambda_{\boldsymbol{M}}^{+}\hspace{-2pt}\left(\lambda_{{\bf S}},\epsilon\right)$
is \textit{strictly increasing in} $\lambda_{{\bf S}}$, for any $\epsilon>0$.
Lastly, it is easy to show that $\lambda_{\boldsymbol{M}}^{+}\hspace{-2pt}\left(\lambda_{{\bf S}},\epsilon\right)>\epsilon/\left(1+\epsilon\right)$,
regardless of $\lambda_{{\bf S}}\ge0$ (a fact which will be useful
later).

$\bullet\;1+\epsilon^{2}\left(\left(\lambda_{{\bf S}}\right)^{2}\hspace{-2pt}-\hspace{-2pt}1\right)\hspace{-2pt}<\hspace{-2pt}0\,$:
In this case, the respective pairs of eigenvalues of ${\bf M}$ are
complex. Hence, it is true that
\begin{equation}
\left|\lambda_{\boldsymbol{M}}^{\pm}\hspace{-2pt}\left(\lambda_{{\bf S}},\epsilon\right)\right|\hspace{-2pt}\equiv\hspace{-2pt}\dfrac{\sqrt{\left(1+\epsilon+\epsilon\lambda_{{\bf S}}\right)^{2}-1\hspace{-2pt}-\hspace{-2pt}\epsilon^{2}\left(\left(\lambda_{{\bf S}}\right)^{2}\hspace{-2pt}-\hspace{-2pt}1\right)}}{2\left(1+\epsilon\right)}.
\end{equation}
By inspection, we have $\partial\left|\lambda_{\boldsymbol{M}}^{\pm}\hspace{-2pt}\left(\lambda_{{\bf S}},\epsilon\right)\right|/\partial\lambda_{{\bf S}}>0$.
This again implies that $\left|\lambda_{\boldsymbol{M}}^{\pm}\hspace{-2pt}\left(\lambda_{{\bf S}},\epsilon\right)\right|$
is \textit{strictly increasing in} $\lambda_{{\bf S}}$, regardless
of $\lambda_{{\bf S}}$, for any feasible choice of $\epsilon>0$.
Concerning comparison with $\epsilon/\left(1+\epsilon\right)$, looking
at $\left|\lambda_{\boldsymbol{M}}^{\pm}\hspace{-2pt}\left(\lambda_{{\bf S}},\epsilon\right)\right|$
as a function of $\epsilon$, it is true that
\begin{equation}
\left|\lambda_{\boldsymbol{M}}^{\pm}\hspace{-2pt}\left(\lambda_{{\bf S}},\epsilon\right)\right|\gtreqqless\dfrac{\epsilon}{1+\epsilon}\Leftrightarrow\epsilon\lesseqqgtr\dfrac{1+\lambda_{{\bf S}}}{1-\lambda_{{\bf S}}},\label{eq:Equivalence_1}
\end{equation}
which however holds true only for $\lambda_{{\bf S}}\ge0$, since
then and only then the inequality $\left(1+\lambda_{{\bf S}}\right)/\left(1-\lambda_{{\bf S}}\right)\ge1/\sqrt{1-\left(\lambda_{{\bf S}}\right)^{2}}$
is satisfied, where the RHS constitutes the lower bound for $\epsilon$,
due to the condition $1+\epsilon^{2}\left(\left(\lambda_{{\bf S}}\right)^{2}\hspace{-2pt}-\hspace{-2pt}1\right)\hspace{-2pt}<\hspace{-2pt}0$.

Now, for brevity, define the finite sets ${\cal C}\triangleq{\cal R}^{c}$,
with 
\begin{flalign}
{\cal R} & \triangleq\left\{ \lambda_{{\bf S}}\in\left\{ \lambda_{{\bf S}}^{i}\right\} _{i\in\mathbb{N}_{{\bf S}}^{2}}\left|1+\epsilon^{2}\left(\left(\lambda_{{\bf S}}\right)^{2}\hspace{-2pt}-\hspace{-2pt}1\right)\hspace{-2pt}\ge\hspace{-2pt}0\right.\hspace{-2pt}\right\} ,
\end{flalign}
both depending on the particular choice of $\epsilon>0$. Then, from
\eqref{eq:max_max} and combining the arguments made previously, the
SLEM under study may be expressed as
\begin{align}
\rho\left(\epsilon,\lambda_{{\bf S}}^{2},\ldots,\lambda_{{\bf S}}^{S}\right) & \hspace{-2pt}\equiv\hspace{-2pt}\max\hspace{-2pt}\left\{ \hspace{-2pt}\dfrac{\epsilon}{1\hspace{-2pt}+\hspace{-2pt}\epsilon},\max\hspace{-2pt}\left\{ \hspace{-2pt}\max_{0\le\lambda_{{\bf S}}\in{\cal R}}\lambda_{\boldsymbol{M}}^{+}\hspace{-2pt}\left(\lambda_{{\bf S}},\epsilon\right)\hspace{-2pt},\hspace{-2pt}\max_{\lambda_{{\bf S}}\in{\cal C}}\left|\lambda_{\boldsymbol{M}}^{+}\hspace{-2pt}\left(\lambda_{{\bf S}},\epsilon\right)\right|\right\} \hspace{-2pt}\right\} \hspace{-2pt}.\label{eq:max_max_max}
\end{align}
Apparently, there are two possibilities for $\lambda_{{\bf S}}^{2}$;
either $\lambda_{{\bf S}}^{2}\in{\cal C}$, or $\lambda_{{\bf S}}^{2}\in{\cal R}$.
If $\lambda_{{\bf S}}^{2}\in{\cal C}$ and due to the monotonicity
of the involved eigenvalues with respect to $\lambda_{{\bf S}}$,
it is true that $\max_{\lambda_{{\bf S}}\in{\cal C}}\left|\lambda_{\boldsymbol{M}}^{+}\hspace{-2pt}\left(\lambda_{{\bf S}},\epsilon\right)\right|\equiv\left|\lambda_{\boldsymbol{M}}^{+}\hspace{-2pt}\left(\lambda_{{\bf S}}^{2},\epsilon\right)\right|$.
Also, if there is any chance for the problem $\max_{0\le\lambda_{{\bf S}}\in{\cal R}}\lambda_{\boldsymbol{M}}^{+}\hspace{-2pt}\left(\lambda_{{\bf S}},\epsilon\right)$
to be feasible, its optimal value would attained for some $0\le\lambda_{{\bf S}}^{*}\in{\cal R}$,
with $\lambda_{{\bf S}}^{2}>\lambda_{{\bf S}}^{*}$. In this case,
since $\lambda_{{\bf S}}^{2}\in{\cal C}$ and by assumption $\lambda_{{\bf S}}^{2}\ge0$,
it readily follows that $\lambda_{{\bf S}}^{2}<\sqrt{\epsilon^{2}-1}/\epsilon$.
On the other hand, since $0\le\lambda_{{\bf S}}^{*}\in{\cal R}$ by
definition of the feasible set of the respective optimization problem,
it must be true that $\lambda_{{\bf S}}^{*}\ge\sqrt{\epsilon^{2}-1}/\epsilon$.
This, however, contradicts the fact that, also by definition, $\lambda_{{\bf S}}^{2}>\lambda_{{\bf S}}^{*}$.
Therefore, the problem $\max_{0\le\lambda_{{\bf S}}\in{\cal R}}\lambda_{\boldsymbol{M}}^{+}\hspace{-2pt}\left(\lambda_{{\bf S}},\epsilon\right)$
is infeasible, returning by convention the value $-\infty$. In the
case where $\lambda_{{\bf S}}^{2}\in{\cal R}$, it is true that $\max_{0\le\lambda_{{\bf S}}\in{\cal R}}\lambda_{\boldsymbol{M}}^{+}\hspace{-2pt}\left(\lambda_{{\bf S}},\epsilon\right)\equiv\lambda_{\boldsymbol{M}}^{+}\hspace{-2pt}\left(\lambda_{{\bf S}}^{2},\epsilon\right)$.
As above, if $\max_{\lambda_{{\bf S}}\in{\cal C}}\left|\lambda_{\boldsymbol{M}}^{+}\hspace{-2pt}\left(\lambda_{{\bf S}},\epsilon\right)\right|$
is feasible, its optimal value $\left|\lambda_{\boldsymbol{M}}^{+}\hspace{-2pt}\left(\lambda_{{\bf S}}^{*},\epsilon\right)\right|$
would be attained for some $\lambda_{{\bf S}}^{*}\in{\cal C}$, with
$\lambda_{{\bf S}}^{2}>\lambda_{{\bf S}}^{*}$. Via a simple equivalence
test, it can then be easily shown that $\lambda_{\boldsymbol{M}}^{+}\hspace{-2pt}\left(\lambda_{{\bf S}}^{2},\epsilon\right)>\left|\lambda_{\boldsymbol{M}}^{+}\hspace{-2pt}\left(\lambda_{{\bf S}}^{*},\epsilon\right)\right|$.
Putting it altogether, it follows that the inner optimization problem
in \eqref{eq:max_max_max} equals $\left|\lambda_{\boldsymbol{M}}^{+}\hspace{-2pt}\left(\lambda_{{\bf S}}^{2},\epsilon\right)\right|$,
for all $\epsilon>0$. Therefore, and taking into account the facts
about comparing $\left|\lambda_{\boldsymbol{M}}^{+}\hspace{-2pt}\left(\lambda_{{\bf S}},\epsilon\right)\right|$
with $\epsilon/\left(1+\epsilon\right)$, for $\lambda_{{\bf S}}\ge0$
(since we have assumed that $\lambda_{{\bf S}}^{2}\ge0$), discussed
above, we get that $\rho\left(\epsilon,\lambda_{{\bf S}}^{2},\ldots,\lambda_{{\bf S}}^{S}\right)\hspace{-2pt}\equiv\hspace{-2pt}\rho\left(\epsilon,\lambda_{{\bf S}}^{2}\right)$,
with\makeatletter 
\renewcommand*\env@cases[1][2.1]{%
	\let\@ifnextchar\new@ifnextchar   
	\left\lbrace   
	\def\arraystretch{#1}%
	\array{@{}l@{\quad}l@{}}%
} 
\makeatother
\begin{flalign}
\hspace{-2pt}\rho\left(\epsilon,\lambda_{{\bf S}}^{2}\right)\hspace{-2pt} & \equiv\hspace{-2pt}\max\left\{ \dfrac{\epsilon}{1+\epsilon},\left|\lambda_{\boldsymbol{M}}^{+}\hspace{-2pt}\left(\lambda_{{\bf S}}^{2},\epsilon\right)\right|\right\} \nonumber \\
\hspace{-2pt} & =\hspace{-2pt}\begin{cases}
\left|\lambda_{\boldsymbol{M}}^{+}\hspace{-2pt}\left(\lambda_{{\bf S}}^{2},\epsilon\right)\right|, & \text{if }\lambda_{{\bf S}}^{2}\hspace{-2pt}\in\hspace{-2pt}{\cal R}\lor\left[\lambda_{{\bf S}}^{2}\hspace{-2pt}\in\hspace{-2pt}{\cal C},\text{ }\epsilon\hspace{-2pt}\le\hspace{-2pt}\dfrac{1+\lambda_{{\bf S}}^{2}}{1-\lambda_{{\bf S}}^{2}}\right]\\
\dfrac{\epsilon}{1+\epsilon}, & \text{if }\lambda_{{\bf S}}^{2}\hspace{-2pt}\in\hspace{-2pt}{\cal C},\text{ }\epsilon\hspace{-2pt}>\hspace{-2pt}\dfrac{1+\lambda_{{\bf S}}^{2}}{1-\lambda_{{\bf S}}^{2}}
\end{cases}\hspace{-2pt}\hspace{-2pt},\hspace{-2pt}\label{eq:F_SLEM_1}
\end{flalign}
which of course coincides with \eqref{eq:SLEM_positive}.\makeatletter 
\renewcommand*\env@cases[1][1.2]{%
	\let\@ifnextchar\new@ifnextchar   
	\left\lbrace   
	\def\arraystretch{#1}%
	\array{@{}l@{\quad}l@{}}%
} 
\makeatother 

\textit{Suppose that} $\lambda_{{\bf S}}^{2}<0$. In this unlikely
event, except for $\lambda_{{\bf S}}^{1}$, all other eigenvalues
of ${\bf S}$ will be negative. Instead of proceeding as above, we
first observe that if $\epsilon\le1$, it will be true that $\lambda_{{\bf S}}\in{\cal R}$,
for any $\lambda_{{\bf S}}\in\left[-1,0\right)$. Second, under these
circumstances, it can be easily shown that $\left|\lambda_{\boldsymbol{M}}^{+}\hspace{-2pt}\left(\lambda_{{\bf S}},\epsilon\right)\right|\equiv\lambda_{\boldsymbol{M}}^{+}\hspace{-2pt}\left(\lambda_{{\bf S}},\epsilon\right)>\left|\lambda_{\boldsymbol{M}}^{-}\hspace{-2pt}\left(\lambda_{{\bf S}},\epsilon\right)\right|$
(due to the analysis presented above) and that $\lambda_{\boldsymbol{M}}^{+}\hspace{-2pt}\left(\lambda_{{\bf S}},\epsilon\right)$
is strictly increasing when $\epsilon<1$, whereas $\lambda_{\boldsymbol{M}}^{+}\hspace{-2pt}\left(\lambda_{{\bf S}},\epsilon\right)\equiv\epsilon/\left(\epsilon+1\right)\equiv1/2$,
if and only if $\epsilon\equiv1$. Then, again via a careful equivalence
test, it may be shown that $\left|\lambda_{\boldsymbol{M}}^{+}\hspace{-2pt}\left(\lambda_{{\bf S}},\epsilon\right)\right|\equiv\lambda_{\boldsymbol{M}}^{+}\hspace{-2pt}\left(\lambda_{{\bf S}},\epsilon\right)\ge\epsilon/\left(\epsilon+1\right)$
if $\epsilon\le1$, and the complementary if $\epsilon>1$. Thus,
what remains is to study how $\left|\lambda_{\boldsymbol{M}}^{\pm}\hspace{-2pt}\left(\lambda_{{\bf S}},\epsilon\right)\right|$
compare with $\epsilon/\left(\epsilon+1\right)$ when $\lambda_{{\bf S}}\in{\cal C}$
and $\epsilon>1$. From the generic equivalence \eqref{eq:Equivalence_1},
it follows that, simply, $\left|\lambda_{\boldsymbol{M}}^{\pm}\hspace{-2pt}\left(\lambda_{{\bf S}},\epsilon\right)\right|<\epsilon/\left(\epsilon+1\right)$,
whenever $\lambda_{{\bf S}}\in\left(-1,0\right)$. Also, when $\lambda_{{\bf S}}\equiv-1$,
$\left|\lambda_{\boldsymbol{M}}^{\pm}\hspace{-2pt}\left(\lambda_{{\bf S}},\epsilon\right)\right|\le1/\left(\epsilon+1\right)$,
which is strictly smaller than $\epsilon/\left(\epsilon+1\right)$,
since $\epsilon>1$. Combining all the above with the strict monotonicity
of $\left|\lambda_{\boldsymbol{M}}^{\pm}\hspace{-2pt}\left(\lambda_{{\bf S}},\epsilon\right)\right|$
when $\lambda_{{\bf S}}\in{\cal C}$ and working similarly to \eqref{eq:max_max_max},
we again have $\rho\left(\epsilon,\lambda_{{\bf S}}^{2},\ldots,\lambda_{{\bf S}}^{S}\right)\hspace{-2pt}\equiv\hspace{-2pt}\rho\left(\epsilon,\lambda_{{\bf S}}^{2}\right)$,
with
\begin{flalign}
\rho\left(\epsilon,\lambda_{{\bf S}}^{2}\right) & \hspace{-2pt}\equiv\hspace{-2pt}\max\left\{ \dfrac{\epsilon}{1+\epsilon},\left|\lambda_{\boldsymbol{M}}^{+}\hspace{-2pt}\left(\lambda_{{\bf S}}^{2},\epsilon\right)\right|\right\} \nonumber \\
 & \hspace{-2pt}=\hspace{-2pt}\begin{cases}
\left|\lambda_{\boldsymbol{M}}^{+}\hspace{-2pt}\left(\lambda_{{\bf S}}^{2},\epsilon\right)\right|, & \text{if }\epsilon\le1\text{ }\left(\text{and }\lambda_{{\bf S}}\in{\cal R}\right)\\
\dfrac{\epsilon}{1+\epsilon}, & \text{if }\epsilon>1
\end{cases}\hspace{-2pt},\label{eq:F_SLEM_2}
\end{flalign}
which coincides with the respective expression of Theorem \ref{Spectrum_of_M}.

We now turn our attention to the problem of minimizing the SLEM of
${\bf M}$ with respect to $\epsilon$. Given what was stated above,
we are interested in solving the minimax problem
\begin{equation}
\rho^{*}\hspace{-2pt}\left(\lambda_{{\bf S}}^{2}\right)\hspace{-2pt}\hspace{-2pt}\triangleq\hspace{-2pt}\min_{\epsilon>0}\rho\hspace{-2pt}\left(\epsilon,\lambda_{{\bf S}}^{2}\right)\hspace{-2pt}\hspace{-2pt}\equiv\hspace{-2pt}\min_{\epsilon>0}\max\hspace{-2pt}\left\{ \hspace{-2pt}\dfrac{\epsilon}{1\hspace{-2pt}+\hspace{-2pt}\epsilon},\hspace{-2pt}\left|\lambda_{\boldsymbol{M}}^{+}\hspace{-2pt}\left(\lambda_{{\bf S}}^{2},\epsilon\right)\right|\hspace{-2pt}\right\} \hspace{-2pt},\hspace{-2pt}\hspace{-2pt}
\end{equation}
for given $\lambda_{{\bf S}}^{2}\hspace{-2pt}\in\hspace{-2pt}\left(-1,1\right)$.
It is of course reasonable to consider the cases where $\lambda_{{\bf S}}^{2}\ge0$
and $\lambda_{{\bf S}}^{2}<0$ separately, as above.

\textit{Suppose that} $\lambda_{{\bf S}}^{2}\ge0$. If additionally
$\lambda_{{\bf S}}^{2}\hspace{-2pt}\in\hspace{-2pt}{\cal R}$ (see
\eqref{eq:F_SLEM_1}), $\rho\left(\epsilon,\lambda_{{\bf S}}\right)\equiv\lambda_{\boldsymbol{M}}^{+}\hspace{-2pt}\left(\lambda_{{\bf S}}^{2},\epsilon\right)$.
In the following, we show that $\rho\left(\epsilon,\lambda_{{\bf S}}^{2}\right)$
is strictly decreasing in $\epsilon$, for all $\lambda_{{\bf S}}^{2}\hspace{-2pt}\in\hspace{-2pt}{\cal R}$,
through an equivalence test (actually, \textit{either $\lambda_{{\bf S}}^{2}$
is positive or not}). Suppose that $\partial\rho\left(\epsilon,\lambda_{{\bf S}}^{2}\right)/\partial\epsilon<0$.
Then, after some algebra, this statement can be shown to be equivalent
to 
\begin{equation}
\lambda_{{\bf S}}^{2}\sqrt{1+\epsilon^{2}\left(\left(\lambda_{{\bf S}}^{2}\right)^{2}-1\right)}<1-\epsilon\left(\left(\lambda_{{\bf S}}^{2}\right)^{2}-1\right).
\end{equation}
Since both sides of the inequality above are nonnegative for $\lambda_{{\bf S}}^{2}\ge0$
(if $\lambda_{{\bf S}}^{2}<0$, then take everything on the LHS and
we are done), we can take squares and setting $x\triangleq$$\left(\lambda_{{\bf S}}^{2}\right)^{2}-1$,
we get the equivalent statement $x\left(1+\epsilon\right)^{2}<0$,
which is always true, since $x<0$, for all $\lambda_{{\bf S}}^{2}\in\left[0,1\right)$.
Thus, $\rho\left(\epsilon,\lambda_{{\bf S}}^{2}\right)$ is strictly
decreasing in $\epsilon$. Through similar procedures, we can easily
show that if $\lambda_{{\bf S}}^{2}\in{\cal C}$, then $\rho\left(\epsilon,\lambda_{{\bf S}}^{2}\right)$
is strictly increasing (for both respective subcases). Consequently,
for $\lambda_{{\bf S}}^{2}\ge0$, $\rho\left(\epsilon,\lambda_{{\bf S}}^{2}\right)$
is minimized at $\epsilon^{*}\hspace{-2pt}\left(\lambda_{{\bf S}}^{2}\right)\equiv1/\sqrt{1-\left(\lambda_{{\bf S}}^{2}\right)^{2}}$,
with optimal value $\rho^{*}\hspace{-2pt}\left(\lambda_{{\bf S}}^{2}\right)\equiv\rho\left(\epsilon^{*}\hspace{-2pt}\left(\lambda_{{\bf S}}^{2}\right),\lambda_{{\bf S}}^{2}\right)$.

\textit{Finally, suppose that} $\lambda_{{\bf S}}^{2}<0$. From the
previous discussion and \eqref{eq:F_SLEM_2}, it follows that $\rho\left(\epsilon,\lambda_{{\bf S}}^{2}\right)$
is strictly decreasing if $\epsilon\le1$ and strictly increasing
otherwise. Therefore, for $\lambda_{{\bf S}}^{2}<0$, $\epsilon^{*}\hspace{-2pt}\left(\lambda_{{\bf S}}^{2}\right)\equiv1$,
with $\rho^{*}\hspace{-2pt}\left(\lambda_{{\bf S}}^{2}\right)\equiv1/2$.
The proof is complete.\hfill{}\ensuremath{\blacksquare}

\section*{Appendix B: Proof of Theorem \ref{Optimal_Rates}}

From Theorems \ref{Lemma_ADMM_1} and \ref{Spectrum_of_M}, it is
apparent that, in order to study the behavior of the consensus error
bound with respect to the regularizer $\epsilon$ and the doubly stochastic
matrix ${\bf S}$, it suffices to look at the bivariate function
\begin{equation}
f_{n}\left(\epsilon,\lambda_{{\bf S}}^{2}\right)\triangleq\dfrac{\left(\rho\left(\epsilon,\lambda_{{\bf S}}^{2}\right)\right)^{n-1}}{1+\epsilon},\;\epsilon>0,\,\lambda_{{\bf S}}^{2}\in\left[-1,1\right),
\end{equation}
parametrized by $n\in\mathbb{N}^{2}\cap\mathbb{N}^{\left\lfloor \epsilon+1\right\rfloor }$.
Whenever it exists, the first order derivative of $f_{n}\left(\epsilon,\lambda_{{\bf S}}^{2}\right)$
with respect to $\epsilon$ is given by
\begin{equation}
\dfrac{\partial f_{n}\left(\epsilon,\lambda_{{\bf S}}^{2}\right)}{\partial\epsilon}\equiv f_{n}\left(\epsilon,\lambda_{{\bf S}}^{2}\right)\hspace{-2pt}\left(\dfrac{\left(n-1\right)}{\rho\left(\epsilon,\lambda_{{\bf S}}^{2}\right)}\dfrac{\partial\rho\left(\epsilon,\lambda_{{\bf S}}^{2}\right)}{\partial\epsilon}-\dfrac{1}{\left(1+\epsilon\right)}\right)\hspace{-2pt}.\label{eq:deriv_e}
\end{equation}
From Theorem \ref{Spectrum_of_M}, as well as its proof presented
in Appendix A above, we know that (recall the definitions of sets
${\cal R}$ and ${\cal C}$) either (a) $\lambda_{{\bf S}}^{2}\hspace{-2pt}\in\hspace{-2pt}{\cal R}$
and $\rho\left(\epsilon,\lambda_{{\bf S}}^{2}\right)$ is strictly
decreasing in $\epsilon$, or (b) $\lambda_{{\bf S}}^{2}\hspace{-2pt}\in\hspace{-2pt}{\cal C}$
and $\rho\left(\epsilon,\lambda_{{\bf S}}^{2}\right)\equiv\left|\lambda_{\boldsymbol{M}}^{+}\hspace{-2pt}\left(\lambda_{{\bf S}}^{2},\epsilon\right)\right|$
is strictly increasing in $\epsilon$, or (c) $\lambda_{{\bf S}}^{2}\hspace{-2pt}\in\hspace{-2pt}{\cal C}$
and $\rho\left(\epsilon,\lambda_{{\bf S}}^{2}\right)\equiv\epsilon/\left(1+\epsilon\right)$,
also strictly increasing in $\epsilon$. For (a), things are trivial,
since $\partial\rho\left(\epsilon,\lambda_{{\bf S}}^{2}\right)/\partial\epsilon<0$
and, thus, $\partial f_{n}\left(\epsilon,\lambda_{{\bf S}}^{2}\right)/\partial\epsilon<0$
as well, implying that $f_{n}\left(\epsilon,\lambda_{{\bf S}}^{2}\right)$
is strictly decreasing in $\epsilon$, for all $n\in\mathbb{N}^{2}\cap\mathbb{N}^{\left\lfloor \epsilon+1\right\rfloor }$.
Now, for both (b) and (c), since $\partial\rho\left(\epsilon,\lambda_{{\bf S}}^{2}\right)/\partial\epsilon>0$,
\eqref{eq:deriv_e} implies that
\begin{equation}
\dfrac{\partial f_{n}\left(\epsilon,\lambda_{{\bf S}}^{2}\right)}{\partial\epsilon}>0\Leftrightarrow n>\dfrac{\rho\left(\epsilon,\lambda_{{\bf S}}^{2}\right)}{\left(1+\epsilon\right)\partial\rho\left(\epsilon,\lambda_{{\bf S}}^{2}\right)/\partial\epsilon}+1,
\end{equation}
from where it can be carefully shown that the inequality on the right
becomes $n>2\epsilon+1$ for case (b) and $n>\epsilon+1$ for case
(c). Therefore, choosing $n>2\epsilon+1$, for any $\epsilon$ results
in a convenient, global constraint for $n$. Next, fix ${\bf S}\in\mathfrak{S}$
and choose an $\epsilon_{max}\ge\epsilon^{*}\hspace{-2pt}\left(\lambda_{{\bf S}}^{2}\right)$.
Then, as long as $n>2\epsilon_{max}+1$, it may be readily shown that
$f_{n}\left(\epsilon,\lambda_{{\bf S}}^{2}\right)$ behaves exactly
like $\rho\left(\epsilon,\lambda_{{\bf S}}^{2}\right)$ whenever $\epsilon\in\left(0,\left(n-1\right)/2\right]$,
as far as where its extrema are located, as well as their type. In
particular, since $\epsilon^{*}\hspace{-2pt}\left(\lambda_{{\bf S}}^{2}\right)\le\epsilon_{max}<\left(n-1\right)/2$
by construction, it is true that $f_{n}\left(\epsilon,\lambda_{{\bf S}}^{2}\right)$
is minimized exactly at $\epsilon^{*}\hspace{-2pt}\left(\lambda_{{\bf S}}^{2}\right)$
(as defined in Theorem \ref{Spectrum_of_M}), whenever $\epsilon\in\left(0,\epsilon_{max}\right]$,
\textit{regardless of} $n$. Observe that demanding $n>2\epsilon_{max}+1$
implies that the constraint $n\in\mathbb{N}^{2}\cap\mathbb{N}^{\left\lfloor \epsilon+1\right\rfloor }$
is satisfied, for all $\epsilon\in\left(0,\epsilon_{max}\right]$.

The case where $\epsilon>\epsilon_{max}$ is quite more complicated.
To this end, let us consider the \textit{multiobjective, }scalar,
constrained optimization problem (recall the constraint $n\in\mathbb{N}^{2}\cap\mathbb{N}^{\left\lfloor \epsilon+1\right\rfloor }$
and that $n>2\epsilon_{max}+1>2$)
\begin{equation}
\underset{\epsilon>0}{\mathrm{minimize}}\hspace{7.3pt}\left[\left\{ f_{n}\left(\epsilon,\lambda_{{\bf S}}^{2}\right)\text{ s.t. }\epsilon<n\right\} _{n>2\epsilon_{max}+1}\right].\label{eq:multi_2}
\end{equation}
In order for the problem to be feasible, every individual objective
must be feasible. Therefore, in order to satisfy the constraint $\epsilon<n$,
\textit{simultaneously for all} $n>2\epsilon_{max}+1$, it must be
true that $\epsilon\in\left(0,\left\lfloor 2\epsilon_{max}+2\right\rfloor \right)$.
Now, since the case $\epsilon\in\left(0,\epsilon_{max}\right]$ has
already been covered above, suppose that $\epsilon\in\left(\epsilon_{max},\left\lfloor 2\epsilon_{max}+2\right\rfloor \right)$.
For such $\epsilon$'s, let us compare the values of the objectives
of \eqref{eq:multi_2} with those obtained for $\epsilon\in\left(0,\epsilon_{max}\right]$,
and, in particular, for $\epsilon\equiv\epsilon^{*}\hspace{-2pt}\left(\lambda_{{\bf S}}^{2}\right)$.
For any $n>2\epsilon_{max}+1$ and whenever $\left(n-1\right)/2<\left\lfloor 2\epsilon_{max}+2\right\rfloor $,
there are two possible choices for $\epsilon$: either $\epsilon\in\left(\epsilon_{max},\left(n-1\right)/2\right]$,
where, due to monotonicity, $f_{n}\left(\epsilon,\lambda_{{\bf S}}^{2}\right)>f_{n}\left(\epsilon^{*}\hspace{-2pt}\left(\lambda_{{\bf S}}^{2}\right),\lambda_{{\bf S}}^{2}\right)$,
or $\epsilon\in\left(\left(n-1\right)/2,\left\lfloor 2\epsilon_{max}+2\right\rfloor \right)$,
which might indeed be such that $f_{n}\left(\epsilon,\lambda_{{\bf S}}^{2}\right)<f_{n}\left(\epsilon^{*}\hspace{-2pt}\left(\lambda_{{\bf S}}^{2}\right),\lambda_{{\bf S}}^{2}\right)$,
since, in this region and for the particular $n$, $f_{n}\left(\epsilon,\lambda_{{\bf S}}^{2}\right)$
might be decreasing. However, for the same $\epsilon$, there always
exists $\widetilde{n}>2\epsilon_{max}+1$ sufficiently large, corresponding
to another objective of \eqref{eq:multi_2}, such that $\epsilon\in\left(\epsilon_{max},\left(\widetilde{n}-1\right)/2\right]$,
implying, as above, that $f_{\widetilde{n}}\left(\epsilon,\lambda_{{\bf S}}^{2}\right)>f_{\widetilde{n}}\left(\epsilon^{*}\hspace{-2pt}\left(\lambda_{{\bf S}}^{2}\right),\lambda_{{\bf S}}^{2}\right)$.
In the remaining case where $\left(n-1\right)/2\ge\left\lfloor 2\epsilon_{max}+2\right\rfloor $,
again from monotonicity as above, it readily follows that $f_{n}\left(\epsilon,\lambda_{{\bf S}}^{2}\right)>f_{n}\left(\epsilon^{*}\hspace{-2pt}\left(\lambda_{{\bf S}}^{2}\right),\lambda_{{\bf S}}^{2}\right)$.
We have shown that if, for some $n>2\epsilon_{max}+1$, there exists
$\epsilon>0$ such that $f_{n}\left(\epsilon,\lambda_{{\bf S}}^{2}\right)<f_{n}\left(\epsilon^{*}\hspace{-2pt}\left(\lambda_{{\bf S}}^{2}\right),\lambda_{{\bf S}}^{2}\right)$,
then there exists another $\widetilde{n}>2\epsilon_{max}+1$, such
that, for the same $\epsilon$, the corresponding objective is relatively
increased, that is, $f_{\widetilde{n}}\left(\epsilon,\lambda_{{\bf S}}^{2}\right)>f_{\widetilde{n}}\left(\epsilon^{*}\hspace{-2pt}\left(\lambda_{{\bf S}}^{2}\right),\lambda_{{\bf S}}^{2}\right)$.
Therefore, the point $\epsilon\equiv\epsilon^{*}\hspace{-2pt}\left(\lambda_{{\bf S}}^{2}\right)$
constitutes a Pareto optimal solution for \eqref{eq:multi_2} and
the resulting optimal bound \eqref{eq:Bound_2} of Theorem \ref{Optimal_Rates}
readily follows.

Regarding the second part of Theorem \ref{Optimal_Rates}, for a fixed,
``worst case'' $\widetilde{{\bf S}}\in\mathfrak{S}$, we focus on
the function(al) (see \eqref{eq:Bound_2})
\begin{equation}
g_{n}\left(\lambda_{{\bf S}}^{2}\right)\triangleq\gamma\left(\lambda_{{\bf S}}^{2}\right)\left(\rho^{*}\left(\lambda_{{\bf S}}^{2}\right)\right)^{n},\quad\lambda_{{\bf S}}^{2}\in\left[-1,1\right),
\end{equation}
for $n>2\epsilon_{max}+1\ge2\epsilon^{*}\hspace{-2pt}\left(\lambda_{\widetilde{{\bf S}}}^{2}\right)+1$.
Via a simple but tedious equivalence test, it can be shown that
\begin{equation}
\dfrac{\partial g_{n}\left(\lambda_{{\bf S}}^{2}\right)}{\partial\lambda_{{\bf S}}^{2}}>0\Leftrightarrow n>\dfrac{\lambda_{{\bf S}}^{2}+1}{\sqrt{1-\left(\lambda_{{\bf S}}^{2}\right)^{2}}},
\end{equation}
for all $\lambda_{{\bf S}}^{2}\in\left[0,1\right)$ (note that for
$\lambda_{{\bf S}}^{2}\in\left[-1,0\right)$, $g_{n}\left(\lambda_{{\bf S}}^{2}\right)$
is constant), where it can be easily verified that the RHS of the
above stated inequality on $n$ (on the right) is itself strictly
increasing in $\lambda_{{\bf S}}^{2}$ and that, additionally (recall
that, by definition, $\epsilon_{max}\ge\epsilon^{*}\hspace{-2pt}\left(\lambda_{\widetilde{{\bf S}}}^{2}\right)$),
the already existent requirement that $n>2\epsilon_{max}+1$ implies
the aforementioned inequality, regardless of $\lambda_{{\bf S}}^{2}\in\left[0,\lambda_{\widetilde{{\bf S}}}^{2}\right]$.
This may be shown, again, via a, trivial this time, equivalence test.
Then, it is guaranteed that $g_{n}\left(\lambda_{{\bf S}}^{2}\right)$
will be increasing whenever ${\bf S}\in\mathfrak{S}$ is additionally
such that $\lambda_{{\bf S}}^{2}\in\left[-1,\lambda_{\widetilde{{\bf S}}}^{2}\right]$.
Our claims follow.\hfill{}\ensuremath{\blacksquare}

\section*{Appendix C: Proof of Lemma \ref{Lemma_ADMM_2}}

Simply, since
\begin{equation}
\left\Vert \boldsymbol{\vartheta}_{t}^{j}\left(n\right)-\boldsymbol{\vartheta}_{t}^{j}\left(\infty\right)\right\Vert _{2}^{2}\equiv\sum_{k\in\mathbb{N}_{S}^{+}}\left|\boldsymbol{\vartheta}_{t}^{j}\left(n,k\right)-\theta_{t}\left(\boldsymbol{x}_{j}\right)\right|^{2},
\end{equation}
under the respective hypotheses, Lemma \ref{Lemma_ADMM_1} implies
that
\begin{equation}
\left|\boldsymbol{\vartheta}_{t}^{j}\left(n,k\right)-\theta_{t}\left(\boldsymbol{x}_{j}\right)\right|^{2}\le\left(\gamma\left\Vert \boldsymbol{\theta}_{t}\left(\boldsymbol{x}_{j}\right)\right\Vert _{2}n\rho^{n}\right)^{2},
\end{equation}
for all $k\in\mathbb{N}_{S}^{+}$, which proves the result.\hfill{}\ensuremath{\blacksquare}

\section*{Appendix D: Proof of Lemma \ref{Lemma_GROWTH}}

By definition, it is true that
\begin{flalign}
\dfrac{\theta_{t}^{k}\left(\boldsymbol{x}_{j}\right)}{S} & \equiv\left({\bf \overline{y}}_{t}^{k}\left(\boldsymbol{x}_{j}\right)\right)^{\boldsymbol{T}}\left(\boldsymbol{\Sigma}_{t}^{k}\left(\boldsymbol{x}_{j}\right)\right)^{-1}{\bf \overline{y}}_{t}^{k}\left(\boldsymbol{x}_{j}\right)+\log\det\left(\boldsymbol{\Sigma}_{t}^{k}\left(\boldsymbol{x}_{j}\right)\right)\nonumber \\
 & \le\left\Vert {\bf y}_{t}^{k}-\boldsymbol{\mu}_{t}^{k}\left(\boldsymbol{x}_{j}\right)\right\Vert _{2}^{2}\lambda_{inf}+N_{k}\log\left(\lambda_{sup}\right)\nonumber \\
 & \le\left(\left\Vert {\bf y}_{t}^{k}\right\Vert _{2}+\mu_{sup}\right)^{2}\lambda_{inf}+N_{k}\log\left(\lambda_{sup}\right)\nonumber \\
 & \le\left(\left\Vert {\bf y}_{t}\right\Vert _{2}+\mu_{sup}\right)^{2}\lambda_{inf}+N\log\left(\lambda_{sup}\right).
\end{flalign}
However, from Lemma \ref{Lemma_WHP}, $\left\Vert {\bf y}_{t}\right\Vert _{2}\hspace{-2pt}<\hspace{-2pt}\sqrt{\beta CN\hspace{-2pt}\left(1\hspace{-2pt}+\hspace{-2pt}\log\hspace{-2pt}\left(T\hspace{-2pt}+\hspace{-2pt}1\right)\right)}$
simultaneously for all $t\in\mathbb{N}_{T}$, with probability at
least $1-\left(T\hspace{-2pt}+\hspace{-2pt}1\right)^{1-CN}\hspace{-2pt}\exp\hspace{-2pt}\left(-CN\right)$.
Consequently, in that event,
\begin{align}
\dfrac{\theta_{t}^{k}\left(\boldsymbol{x}_{j}\right)}{S} & \le\left(\sqrt{\beta CN\left(1+\log\left(T+1\right)\right)}+\mu_{sup}\right)^{2}\lambda_{inf}+N\log\left(\lambda_{sup}\right)
\end{align}
and since the bound $\sqrt{\beta CN\left(1+\log\left(t+1\right)\right)}$
is greater than unity, it will be true that
\begin{align}
\dfrac{\theta_{t}^{k}\left(\boldsymbol{x}_{j}\right)}{S} & \le\beta CN\left(1+\log\left(T+1\right)\right)\left(\left(1+\mu_{sup}\right)^{2}\lambda_{inf}+\log\left(\lambda_{sup}\right)\right).
\end{align}
Further, defining $\delta\hspace{-2pt}\triangleq\hspace{-2pt}\beta\left(\hspace{-2pt}\left(1+\mu_{sup}\right)^{2}\hspace{-2pt}\lambda_{inf}\hspace{-2pt}+\hspace{-2pt}\log\hspace{-2pt}\left(\lambda_{sup}\right)\hspace{-2pt}\right)\hspace{-2pt}>\hspace{-2pt}1$,
the $\ell_{2}$-norm of $\boldsymbol{\theta}_{t}\left(\boldsymbol{x}_{j}\right)$
can be upper bounded as 
\begin{flalign}
\left\Vert \boldsymbol{\theta}_{t}\left(\boldsymbol{x}_{j}\right)\right\Vert _{2} & \equiv\sqrt{\sum_{k\in{\cal C}}\left(\theta_{t}^{k}\left(\boldsymbol{x}_{j}\right)\right)^{2}}\nonumber \\
 & \le\delta S^{1.5}CN\left(1+\log\left(T+1\right)\right),
\end{flalign}
with probability at least $1\hspace{-2pt}-\hspace{-2pt}\left(T\hspace{-2pt}+\hspace{-2pt}1\right)^{1-CN}\hspace{-2pt}\exp\hspace{-2pt}\left(-CN\right)$.\hfill{}\ensuremath{\blacksquare}

\section*{Appendix E: Proof of Lemma \ref{Lemma_ADMM_3}}

From Lemma \ref{Lemma_ADMM_2}, we know that
\begin{equation}
\boldsymbol{\vartheta}_{t}^{j}\left(n,k\right)\ge\theta_{t}\left(\boldsymbol{x}_{L}^{j}\right)-\gamma\left\Vert \boldsymbol{\theta}_{t}\left(\boldsymbol{x}_{j}\right)\right\Vert _{2}n\rho^{n},
\end{equation}
from where, invoking Lemma \ref{Lemma_GROWTH} and by definition of
$\theta_{t}\left(\boldsymbol{x}_{j}\right)$, we arrive at the lower
bound
\begin{align}
\boldsymbol{\vartheta}_{t}^{j}\hspace{-2pt}\left(n,\hspace{-2pt}k\right) & \hspace{-2pt}\ge\hspace{-2pt}N\hspace{-2pt}\log\hspace{-2pt}\left(\lambda_{inf}\right)\hspace{-2pt}-\hspace{-2pt}\gamma\delta S^{1.5}CN\hspace{-2pt}\left(1\hspace{-2pt}+\hspace{-2pt}\log\hspace{-2pt}\left(T\hspace{-2pt}+\hspace{-2pt}1\right)\right)\hspace{-2pt}n\rho^{n},
\end{align}
being true with measure at least $1-\left(T+1\right)^{1-CN}\exp\left(-CN\right)$,
under the respective hypotheses. Now, in order to bound $\boldsymbol{\vartheta}_{t}^{j}\left(n,k\right)$
from below, it suffices to demand that
\begin{align}
N\log\left(\lambda_{inf}\right)-\gamma\delta S^{1.5}CN\left(1+\log\left(T+1\right)\right)n\rho^{n} & \ge\dfrac{N}{2}\log\left(\lambda_{inf}\right).
\end{align}
Rearranging terms and taking logarithms, the above inequality will
be true with probability at least $1-\left(T+1\right)^{1-CN}\exp\left(-CN\right)$,
provided that \eqref{eq:Lower_Bound_1} holds, which constitutes needed
to be shown.\hfill{}\ensuremath{\blacksquare}

\bibliographystyle{ieeetr}
\bibliography{IEEEabrv}

\begin{thebibliography}{10}

\bibitem{KalPet_Asilomar_DFilt_2015}
D.~S. Kalogerias and A.~P. Petropulu, ``Distributed nonlinear filtering of
  partially observed markov chains over {WSN}s: Truncating the {ADMM},'' in
  {\em 49th Asilomar Conference on Signals, Systems and Computers}, (Asilomar
  Hotel \& Conference Grounds, Pacific Grove, CA, USA), November 2015.

\bibitem{DPF_SPM_2013}
O.~Hlinka, F.~Hlawatsch, and P.~Djuric, ``Distributed particle filtering in
  agent networks: A survey, classification, and comparison,'' {\em Signal
  Processing Magazine, IEEE}, vol.~30, pp.~61--81, Jan 2013.

\bibitem{APP1_2010}
P.~Corke, T.~Wark, R.~Jurdak, W.~Hu, P.~Valencia, and D.~Moore, ``Environmental
  wireless sensor networks,'' {\em Proceedings of the IEEE}, vol.~98,
  pp.~1903--1917, Nov 2010.

\bibitem{APP2_2010}
J.~Ko, C.~Lu, M.~Srivastava, J.~Stankovic, A.~Terzis, and M.~Welsh, ``Wireless
  sensor networks for healthcare,'' {\em Proceedings of the IEEE}, vol.~98,
  pp.~1947--1960, Nov 2010.

\bibitem{APP4_2007}
T.~Zhao and A.~Nehorai, ``Distributed sequential bayesian estimation of a
  diffusive source in wireless sensor networks,'' {\em Signal Processing, IEEE
  Transactions on}, vol.~55, pp.~1511--1524, April 2007.

\bibitem{APP5_2009}
H.~Aghajan and A.~Cavallaro, {\em Multi-camera networks: principles and
  applications}.
\newblock Academic press, 2009.

\bibitem{APP3_2004_zhao2004wireless}
F.~Zhao and L.~J. Guibas, {\em Wireless sensor networks: an information
  processing approach}.
\newblock Morgan Kaufmann, 2004.

\bibitem{APP6_2002}
A.~Mainwaring, D.~Culler, J.~Polastre, R.~Szewczyk, and J.~Anderson, ``Wireless
  sensor networks for habitat monitoring,'' in {\em Proceedings of the 1st ACM
  International Workshop on Wireless Sensor Networks and Applications}, WSNA
  '02, (New York, NY, USA), pp.~88--97, ACM, 2002.

\bibitem{Consensus1_2004_Olfati-Saber}
R.~Olfati-Saber and R.~Murray, ``Consensus problems in networks of agents with
  switching topology and time-delays,'' {\em Automatic Control, IEEE
  Transactions on}, vol.~49, pp.~1520--1533, Sept 2004.

\bibitem{Consensus2_2005_Olfati-Saber}
R.~Olfati-Saber and J.~Shamma, ``Consensus filters for sensor networks and
  distributed sensor fusion,'' in {\em Decision and Control, 2005 and 2005
  European Control Conference. CDC-ECC '05. 44th IEEE Conference on},
  pp.~6698--6703, Dec 2005.

\bibitem{Boyd_2006Gossip}
S.~Boyd, A.~Ghosh, B.~Prabhakar, and D.~Shah, ``Randomized gossip algorithms,''
  {\em Information Theory, IEEE Transactions on}, vol.~52, pp.~2508--2530, June
  2006.

\bibitem{Consensus3_2010}
A.~Dimakis, S.~Kar, J.~Moura, M.~Rabbat, and A.~Scaglione, ``Gossip algorithms
  for distributed signal processing,'' {\em Proceedings of the IEEE}, vol.~98,
  pp.~1847--1864, Nov 2010.

\bibitem{Consensus4_2010_Oreshkin}
B.~Oreshkin, M.~Coates, and M.~Rabbat, ``Optimization and analysis of
  distributed averaging with short node memory,'' {\em Signal Processing, IEEE
  Transactions on}, vol.~58, pp.~2850--2865, May 2010.

\bibitem{Consensus_Emiliano_2011}
T.~Erseghe, D.~Zennaro, E.~Dall'Anese, and L.~Vangelista, ``Fast consensus by
  the alternating direction multipliers method,'' {\em Signal Processing, IEEE
  Transactions on}, vol.~59, pp.~5523--5537, Nov 2011.

\bibitem{DKF1_2005_Olfati-Saber}
R.~Olfati-Saber, ``Distributed kalman filter with embedded consensus filters,''
  in {\em Decision and Control, 2005 and 2005 European Control Conference.
  CDC-ECC '05. 44th IEEE Conference on}, pp.~8179--8184, Dec 2005.

\bibitem{DKF4_Olfati-Saber}
D.~Spanos, R.~Olfati-Saber, and R.~Murray, ``Approximate distributed kalman
  filtering in sensor networks with quantifiable performance,'' in {\em
  Information Processing in Sensor Networks, 2005. IPSN 2005. Fourth
  International Symposium on}, pp.~133--139, April 2005.

\bibitem{DKF2_2007_Olfati-Saber}
R.~Olfati-Saber, ``Distributed kalman filtering for sensor networks,'' in {\em
  Decision and Control, 2007 46th IEEE Conference on}, pp.~5492--5498, Dec
  2007.

\bibitem{DKF5_2006_SOIKF_Riberio}
A.~Ribeiro, G.~Giannakis, and S.~Roumeliotis, ``Soi-kf: Distributed kalman
  filtering with low-cost communications using the sign of innovations,'' {\em
  Signal Processing, IEEE Transactions on}, vol.~54, pp.~4782--4795, Dec 2006.

\bibitem{DKF6_2008_Schizas}
I.~Schizas, G.~Giannakis, S.~Roumeliotis, and A.~Ribeiro, ``Consensus in ad hoc
  wsns with noisy links--part ii: Distributed estimation and smoothing of
  random signals,'' {\em Signal Processing, IEEE Transactions on}, vol.~56,
  pp.~1650--1666, April 2008.

\bibitem{DKF3_2006_alriksson}
P.~Alriksson and A.~Rantzer, ``Distributed kalman filtering using weighted
  averaging,'' in {\em Proceedings of the 17th International Symposium on
  Mathematical Theory of Networks and Systems}, pp.~2445--2450, 2006.

\bibitem{DKF7_2008}
R.~Carli, A.~Chiuso, L.~Schenato, and S.~Zampieri, ``Distributed kalman
  filtering based on consensus strategies,'' {\em Selected Areas in
  Communications, IEEE Journal on}, vol.~26, pp.~622--633, May 2008.

\bibitem{DKF_2013_Survey}
M.~Mahmoud and H.~Khalid, ``Distributed kalman filtering: a bibliographic
  review,'' {\em Control Theory Applications, IET}, vol.~7, pp.~483--501, March
  2013.

\bibitem{BCF1_2014}
S.~Bandyopadhyay and S.-J. Chung, ``Distributed estimation using bayesian
  consensus filtering,'' in {\em American Control Conference (ACC), 2014},
  pp.~634--641, June 2014.

\bibitem{Elliott1994Exact}
R.~J. Elliott, ``Exact adaptive filters for markov chains observed in gaussian
  noise,'' {\em Automatica}, vol.~30, no.~9, pp.~1399--1408, 1994.

\bibitem{Elliott1994Hidden}
R.~J. Elliott, L.~Aggoun, and J.~B. Moore, {\em Hidden Markov Models}.
\newblock Springer, 1994.

\bibitem{RGG_Dall2002}
J.~Dall and M.~Christensen, ``Random geometric graphs,'' {\em Physical Review
  E}, vol.~66, no.~1, p.~016121, 2002.

\bibitem{RGG_Penrose2003}
M.~Penrose, {\em Random geometric graphs}, vol.~5.
\newblock Oxford University Press Oxford, 2003.

\bibitem{DHMM1_2010}
N.~Ghasemi, S.~Dey, and J.~Baras, ``Stochastic average consensus filter for
  distributed {HMM} filtering,'' tech. rep., Institute for Systems Research
  (ISR), May 2010.

\bibitem{Boyd2004Fastest}
S.~Boyd, P.~Diaconis, and L.~Xiao, ``Fastest mixing markov chain on a graph,''
  {\em SIAM review}, vol.~46, no.~4, pp.~667--689, 2004.

\bibitem{KalPetNonlinear_2015}
D.~S. Kalogerias and A.~P. Petropulu, ``Asymptotically optimal discrete time
  nonlinear filters from stochastically convergent state process
  approximations,'' {\em IEEE Transactions on Signal Processing}, vol.~63,
  pp.~3522 -- 3536, July 2015.

\bibitem{KalPetGRID2014}
D.~S. Kalogerias and A.~P. Petropulu, ``Grid-based filtering of {M}arkov
  processes revisited: Recursive estimation \& asymptotic optimality,'' {\em
  {D}raft available at: \url{http://eceweb1.rutgers.edu/~dkalogerias/}}, 2015.

\bibitem{Silvester2000DET}
J.~R. Silvester, ``Determinants of block matrices,'' {\em The Mathematical
  Gazette}, vol.~84, no.~501, pp.~460--467, 2000.

\end{thebibliography}

\end{document}